\numberwithin{equation}{section}
\newtheorem{theorem}{Theorem}[section]
\newtheorem{proposition}[theorem]{Proposition}
\newtheorem{lemma}[theorem]{Lemma}
\newtheorem{definition}[theorem]{Definition}
\newtheorem{remark}[theorem]{Remark}
\newenvironment{proof}[1][Proof]{\textbf{#1.} }{\ \rule{0.5em}{0.5em}}
\definecolor{mypink1}{rgb}{0.858, 0.188, 0.478}
\title{Last-mile shared delivery:\\ A discrete sequential packing approach}
\author{
	Junyu Cao, Mariana Olvera-Cravioto, Zuo-Jun (Max) Shen \\ {\small Industrial Engineering and Operations Research, University of California, Berkeley}}
\begin{document}
\maketitle

\begin{abstract}
We propose a model for optimizing the last-mile delivery of $n$ packages, from a distribution center to their final recipients, using a strategy that combines the use of ride-sharing platforms (e.g., Uber or Lyft) with traditional in-house van delivery systems. The main objective is to compute the optimal reward offered to private drivers for each of the $n$ packages, such that the total expected cost of delivering all packages is minimized. Our technical approach is based on the formulation of a {\em discrete sequential packing} problem, where bundles of packages are picked up from the warehouse at random times during the  interval $[0, T]$. Our theoretical results include both exact and asymptotic (as $n \to \infty$) expressions for the expected number of packages that will be picked up by time $T$, and are closely related to the classical R\'enyi's parking/packing problem.

\noindent {\em Keywords:} discrete sequential packing; R\'enyi's parking problem; last-mile delivery; asymptotic analysis; shared mobility; traveling salesman problem; capacitated vehicle routing problem. \\
MSC: Primary: 60F10; secondary: 60C05, 60G55 
\end{abstract}

\section{Introduction}

Last-mile delivery is arguably one of the biggest challenges in logistics management, and it may take up to 28\% of the total transportation costs~\cite{roca2016evaluation}.  Therefore, minimizing the cost of this last step of the order fulfillment process is of great importance. Historically, the delivery of items/packages to their final recipients from a centrally located warehouse has been done by either investing in a fleet of vans or trucks to operate regionally, or by outsourcing to third-party logistics companies specializing in such services. However, today's popular ``sharing economy" has provided a new alternative that can take advantage of ride-sharing platforms to offset some of the last-mile delivery costs. In this paper, we propose a model for analyzing a delivery process that combines the use of private drivers (e.g., Uber or Lyft drivers) with a more traditional van delivery system, with the idea of reducing the need to invest and maintain a large fleet of delivery vans or trucks, which is known to be very costly. 

We study here a problem where a warehouse or distribution center has to deliver a large number of packages on a given day. Traditionally, this would be done by deploying a fleet of vans along many different routes, with the routes being computed efficiently using some version of the capacitated vehicle routing problem (CVRP). One of the main features to focus on regarding this approach is the number of vans that are needed to deliver all the packages during the allotted time. Instead, our proposed strategy consists on setting up a window of time during the first part of the day, say $[0, T]$, during which private drivers are encouraged to pick up bundles of packages from the warehouse and deliver them themselves, with any remaining packages at time $T$ being delivered by the warehouse's van system.  Depending on the delivery deadlines that the warehouse has to meet and the feasible delivery times for vans, we could think of time $T$ as the early afternoon, which makes the vans operate during the late afternoon and until the early evening (e.g., UPS and Amazon vans deliver packages until around 8 pm), or even as the end of the working day, in which case the remaining packages are delivered the next day and vans work a full day schedule.  The key difference between our proposed approach compared to the traditional van-only one, is that the number of vans needed will be considerably smaller, which translates into important savings for the warehouse. 

The main challenges of using a mixed strategy are: 1) the modeling of the behavior of private drivers, and 2) the design of a payment scheme that will incentivize private drivers to pick up packages from the warehouse efficiently. Our proposed approach keeps the computational cost low by computing the payment reward that the warehouse will offer for each individual package only once, at the beginning of the day, based on the expected number of packages that will be picked up by private drivers during the time window $[0, T]$.  The main mathematical contribution of this paper lies precisely on the computation of the latter. The key idea is that the number of packages that can be picked up by private drivers can be modeled in a way that is independent of their destinations, provided that all packages are equally desirable to the drivers. To achieve this, we design a payment scheme that makes the profit for delivering packages the same across packages, and we control this profit through a common ``incentive rate". Once the expected number of packages that can be picked up during $[0,T]$ is computed (as a function of the incentive rate) we find the optimal incentive rate  by solving a single-variable continuous optimization problem. The daily computations are done in two phases: one at the beginning of the day that ends with the assignment of the payment rewards for each individual package using the optimal incentive rate, and another one at time $T$ that computes an optimal solution to the CVRP for the leftover packages. 

As mentioned above, the technical contribution of the paper is the computation of the expected number of packages that can be picked-up by private drivers during the time period $[0, T]$. Our methodology is closely related to the analysis of R\'enyi's classical ``parking/packing problem" \cite{renyi1958one}, which studies a model where unit-length ``cars" arrive sequentially to random points of a ``sidewalk" of fixed length and park if they can ``fit", with no parked cars ever leaving. The main question studied in \cite{renyi1958one} is the distribution of the unoccupied space once no more cars can fit, i.e., when there are no remaining unoccupied subintervals of length one. In the context of our package delivery process, we consider a discrete version of the problem where the cars are replaced by bundles of packages and parking attempts are replaced by bundle requests from the private drivers. Since we need to model the randomness in the times at which private drivers' requests arrive, we incorporate a time element to our formulation in the spirit of the packing problem considered in \cite{coffman1998packing}, where the cars/packages (requests for bundles of packages in our case) arrive according to a Poisson process. Our theoretical results are therefore of independent interest within the literature on R\'enyi's parking/packing problem, and include a novel convergence rate theorem that was previously unknown even for special cases of our model.

The paper is organized as follows. In Section~\ref{S.LiteratureReview} we include an overview of the existing literature on last-mile delivery problems in a shared economy.  In Section~\ref{MainResult} we introduce a {\em discrete sequential packing} (DSP) problem  that will be used to compute the expected number of packages that can be picked-up from the warehouse by private drivers during the time period $[0, T]$. In Section~\ref{S.Pricing} we describe a way to estimate the cost of delivering $n$ packages with known destinations $\{ {\bf x}_1, \dots, {\bf x}_n\} \subseteq \mathbb{R}^2$, using a strategy that combines the use of private drivers and in-house vans. We also include in that section a comparison between our proposed strategy and a more traditional van-only strategy. Section~\ref{S.Proofs} contains all the proofs of our theoretical results. Finally, to illustrate our methodology we include, a numerical experiments section (Section~\ref{S.Numerical}) that provides some insights into the computational effort needed to implement our strategy and its potential cost benefit.

\section{Literature review} \label{S.LiteratureReview}

The idea of using shared mobility to deliver packages is still in its infancy, but there are already a few relevant studies worth mentioning.  Li et al.~\cite{li2014share} proposed a framework to use taxis to transport  people and deliver parcels at the same time, where the goal is to determine an optimal pick-up/drop-off sequence. They first consider a static version of the problem where all the people and parcel locations are known and formulate a mixed integer linear program (MILP) to find the optimal pick-up/drop-off sequence; then, they repeatedly solve their static formulation over small periods of time to obtain a more realistic dynamic scenario over a longer time window. In their dynamic approach, parcel locations are revealed at the beginning of the day while passenger locations are updated throughout the day, which means that a full schedule for picking up and delivering parcels cannot be computed upfront. Moreover, the dynamic version requires that the sequence is recomputed every time a passenger request appears.  

Qi et al.~\cite{qi2017shared} consider the problem of delivering packages from a warehouse to locations in a given region, and propose to first subdivide the region into a number of sub-regions, use a van system to distribute the packages among the centers of the sub-regions (referred to as ``terminals") , and then use shared mobility to do the last-mile delivery within each sub-region. The goal is to determine the optimal size (number) of the sub-regions, which is done using a continuous-approximation for the total average cost, and once the sub-regions are determined, the routes for delivering packages within each sub-region are computed by solving an open vehicle routing problem (OVRP).  Note that the optimization problem to identify the regions is meant to be solved only once and the OVRP is solved once per day, unlike the work in \cite{li2014share} where the optimization problem is solved many times every day. 

Perhaps the closest to our approach is the work done in Kafle et al.~\cite{kafle2017design}, where the authors consider using cyclists and pedestrians as crowdsources to help an in-house van system to do the last-leg delivery of parcels originating from a central warehouse. Their model assumes that parcel destinations are revealed at the beginning of the day, at which point cyclists and pedestrians are expected to submit bids to deliver them; then, once all bids have been received, the warehouse solves a mixed integer non-linear program to compute the optimal assignment, with any unassigned parcels being delivered by the in-house vans. Since solving exactly the mixed integer non-linear program is computationally very expensive, the authors propose a tabu search approximate algorithm instead. Compared to our model, we can think of the approach in \cite{kafle2017design} as looking at a multi-player game where the cyclists and pedestrians compete with each other with their bids and the warehouse optimizes over the entire set of bids, while in our setup there is only one player, the warehouse, that computes the optimal rewards for each package based on the expected response of the private drivers. In \cite{kafle2017design} both parcel destinations and bids need to be known at the beginning of the day while in our approach only package destinations need to be revealed while private driver requests remain stochastic.  In both \cite{kafle2017design} and our proposed framework the optimization problem is meant to be solved once every day.

It is worth reiterating that the methodology we propose to compute the payment (reward) offered to private drivers for each individual package is simple and flexible, since it only requires that we optimize a single-variable cost function at the beginning of the day and it allows private drivers to pick up bundles of packages, i.e., not only one package at a time.  As our main results show, this is enough to guarantee a lower total expected cost for the delivery of all packages under very natural cost assumptions. Our modeling approach could also become a building block for constructing improved online pricing models, where the rewards offered to private drivers change throughout the day as more packages are delivered and the allotted time decreases, which would very likely provide even better cost reductions at the expense of more computational effort.

\section{Model description} \label{S.Model}

In the first part of this section we model our last-mile delivery problem using private drivers as a \emph{discrete sequential packing} problem (DSP), and provide both exact and asymptotic results for the expected number of packages that can be delivered during the time window $[0,T]$.   In the second part of this section we describe a payment scheme for the private drivers based on this expectation and introduce a joint optimization problem to calculate the optimal incentive rate for minimizing the total expected cost.

\subsection{The expected number of packages that can be picked} \label{MainResult}

Throughout the paper we consider the problem of delivering $n$ packages over a time window of length $T$. The packages are assumed to have destinations spread over a region of a two-dimensional plane. This region is expected to be a population-dense area where package destinations are close to each other. 

During the period $[0, T]$ the distribution center receives requests from private drivers to deliver available packages. Any undelivered packages at time $T$ will then be delivered by the distribution center's vans/trucks. Moreover, the private drivers are allowed to take a bundle of packages with them, which we assume would have destinations in close proximity to each other. To model this proximity of the destinations, we first arrange all $n$ packages on a circle using the solution (exact or approximate) to the traveling salesman problem (TSP). The idea behind arranging the packages in this way is to reduce the complexity of dealing with the two-dimensional package locations by reorganizing them into one dimension. Note that doing so allows us to disregard the package destinations in the computation of the expected number of packages that will be picked up by private drivers, and we will argue that we can do this since  our pricing mechanism will make the profit rate for delivering each package equal for all packages.

From this point onwards, we can think of the packages as arranged on a circle with $n$ points, where point $i$ will be referred to as the ``location" of package $i$ (see Figure~\ref{F.Tour}). A bundle of size $k$ at location $i$ is the set consisting of the packages at locations $\{i, i+1, \dots, i+k-1\}$. Each location $i$ is associated to a marked Poisson process with rate $\lambda_i$, which represents the requests from private drivers to deliver a bundle whose first package is at location $i$. The marks of the Poisson process determine the size of the bundle that the driver would want to deliver, and is assumed to be distributed according to some distribution $F$, independent of the Poisson process and of any other marked Poisson processes at different locations. We assume that each driver has a preconceived idea of which package(s) they want to pick up, and that if at the time of the request one or more of the desired packages is no longer available, their request is rejected.  In order for all packages to be equally likely to be requested, we will assume that $\lambda_i = \lambda$ for all $1 \leq i \leq n$. Moreover, we assume that $\lambda$ remains constant during $[0, T]$, which would be the case if the pick-up window is chosen to coincide with private drivers' off-peak hours. Once a bundle request is accepted, the driver is also given the segment of the optimal TSP route for delivering all the packages in the bundle. 

Throughout the time interval $[0, t]$, the packages at the $n$ locations are picked up by private drivers, and our goal is to analyze the expected number of packages that can be delivered in this way over the interval $[0, t]$, which we denote by $\mathcal{C}(t,n,\lambda)$. Our analysis of $\mathcal{C}(t,n,\lambda)$ is based on the observation that once the first bundle, say of size $B$, is picked up, the remaining $n-B$ packages can be arranged on a line (see Figure~\ref{F.Delivery}). In fact, our main results for $\mathcal{C}(t,n,\lambda)$ are obtained by analyzing the expected number of packages that can be picked up during the interval $[0,t]$ when we start with $n$ packages arranged on a line, which we denote by $\mathcal{K}(t,n,\lambda)$. Figures~\ref{F.Tour} and \ref{F.Delivery} show an example of the pick-up process. We refer to this model as a {\em discrete sequential packing} (DSP) problem since it has strong connections to R\'enyi's parking/packing problem \cite{renyi1958one}.

The next subsection includes our main results for the computation of $\mathcal{C}(t,n,\lambda)$. We point out that the DSP formulation is based only on the following two assumptions:
\begin{itemize}
\item[i)] The marks of the Poisson process at location $i$, corresponding to bundle size requests and denoted $\{ B_j^{(i)} \}_{j \geq 1}$, are i.i.d.~with common distribution $F$ for all $1 \leq i \leq n$, and are independent of the Poisson processes at every location. 
\item[ii)] The arrival rates of the different Poisson processes, which correspond to the arrival rates for requests from drivers to pick up  bundles at location $i$, are all equal, i.e., $\lambda_i = \lambda$ for all~$1 \leq i \leq n$.
\end{itemize}

\begin{figure}[H]
\begin{center}
  \includegraphics[width=7cm]{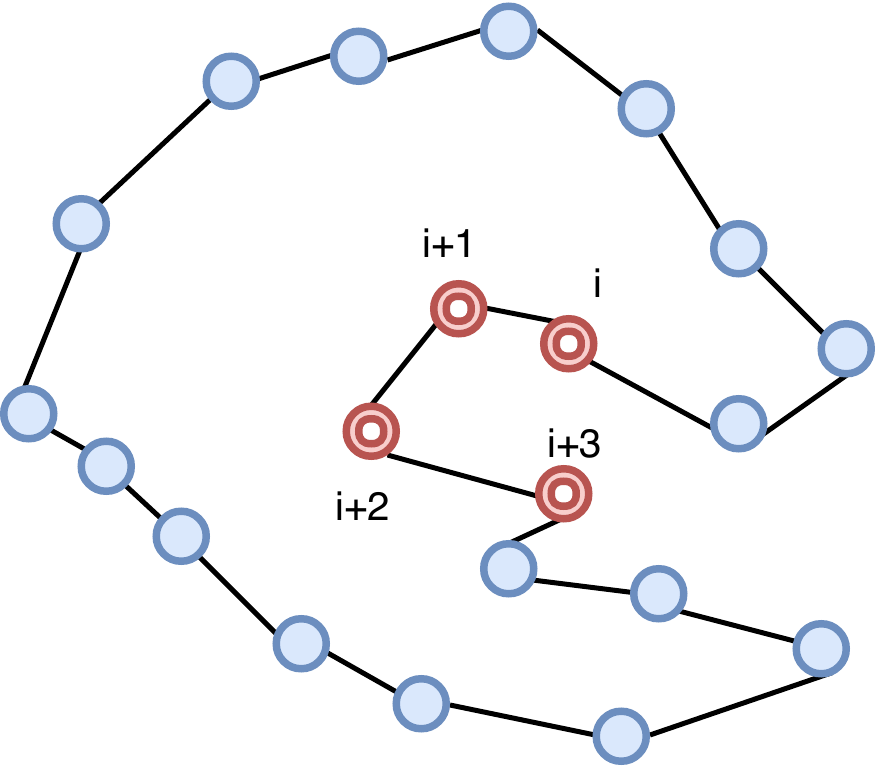}
  \caption{TSP tour. The picture illustrates a case with $n = 21$ packages, where the first bundle to be requested is of size four (double circular packages). After the first bundle is picked up, the remaining packages are arranged on a line.}
\label{F.Tour}
  \end{center}
\end{figure}

\begin{figure}[H]
\begin{center}
\begin{picture}(400,80)
\put(80,0){\includegraphics[scale= 0.7, bb = 0 0 680 170, clip]{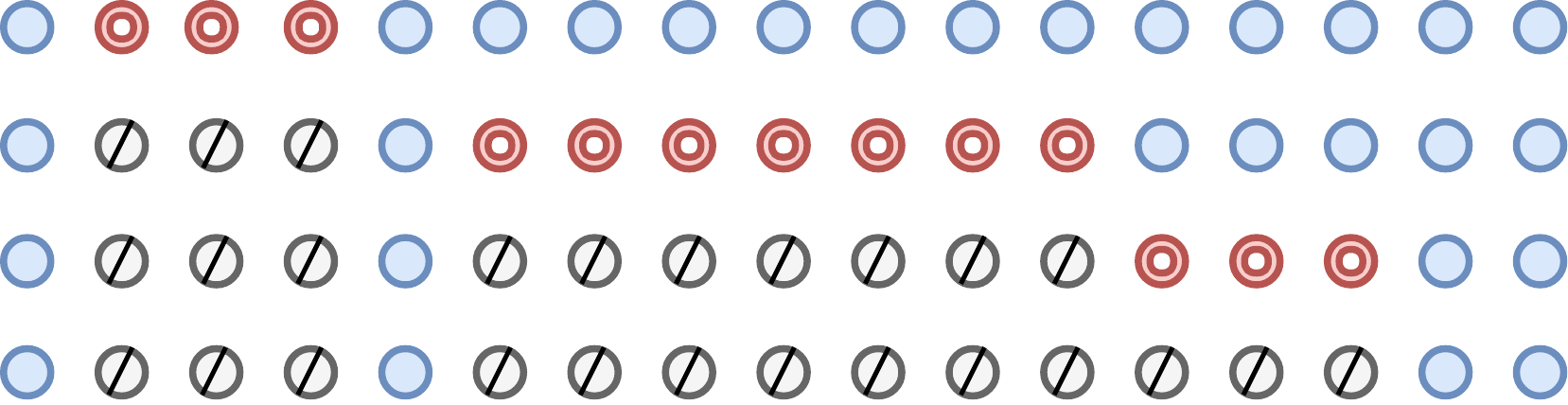}}
\put(0,78){2$^{nd}$ pick-up:}
\put(0,53){3$^{rd}$ pick-up:}
\put(0,28){4$^{th}$ pick-up:}
\put(0,3){5$^{th}$ pick-up:}
\end{picture}
  \end{center}
  \caption{Depiction of the package pick-up process. Double circular packages correspond to accepted bundle requests; circular packages are yet to be picked up; circle-with-a-line packages are already taken.}
\label{F.Delivery}
\end{figure}

As mentioned above, the first of these assumptions is justified in our package delivery context by the TSP route, which in all our numerical experiments (see Section~\ref{S.Numerical}) seems to provide solutions where adjacent packages along the route are typically small, making all bundles of the same size (approximately) equally desirable. The second assumption, which ensures that the arrival rate for requests at all $n$ locations are the same, will be justified by the pricing mechanism described in Section~\ref{S.Pricing}, along with the assumption that our business targets on a population-dense area and packages are close to each other. In particular, we propose a pricing scheme that incorporates the distance from the distribution center to each destination and the neighboring distance between packages along the TSP route, among other parameters, in such a way that the profit for delivering any of the $n$ packages is essentially the same. Since intuitively the total number of packages that need to be delivered and the supply of drivers in a given region are both proportional to its size, we can reasonably assume that $\lambda$ does not depend on $n$. 

To conclude this section, we give a quick overview of the related literature. As mentioned earlier, our model has strong connections to R\'enyi's parking/packing problem~\cite{renyi1958one}, which examines the problem of filling up an interval $I = [0, n]$ with subintervals of length one whose left endpoints are uniformly chosen at random, and computes the asymptotic proportion of filled space (as $n \to \infty$). A discretized version of this problem was introduced by Page~\cite{page1959distribution}, by considering $n$ points instead of the interval $I$ and replacing the subintervals of unit length with pairs of adjacent points. The main results in \cite{page1959distribution} include the mean and other moments of the distribution of the remaining isolated points, and the setup for the problem corresponds to choosing $P(B = 2) = 1$ in the description of our model. Page's work was later generalized, both theoretically and numerically, to the case $P(B = m) = 1$ in~\cite{clay2016renyi, mackenzie1962sequential, pinsky2014problems}. We point out that none of the references mentioned above require the use of a ``time" component for the random selection of subintervals/points, since only the distribution of unfilled space is of interest. In this regard, our work is more closely related to that of~\cite{coffman1998packing}, which assumes that the times at which the subintervals are chosen in R\'enyi's parking/packing problem occur according to a Poisson process, in which case one can focus on the proportion of filled space by time $t$. The results in~\cite{coffman1998packing} include the asymptotic proportion of filled space by time $t$, as $n \to \infty$, and a corresponding central limit theorem. 

For implementation purposes, we also refer the reader to \cite{applegate2011traveling,cook2011pursuit,hoffman2013traveling,lawler1985traveling} for further details on the TSP solution and existing algorithmic approaches for its computation \cite{aarts1988quantitative, nagata2006new, potvin1996genetic}.


 \subsubsection{Results for the discrete sequential packing problem} \label{SS.MainResultDSP}
 
 Our main results for $\mathcal{C}(t,n,\lambda)$, the expected number of packages that can be delivered during a time period $[0,t]$,  include an exact calculation obtained by solving a differential equation, and its asymptotic behavior as the number of packages goes to infinity. Throughout this section, the arrival rate $\lambda_i$ at each location is assumed to be the same for all locations, say $\lambda$, and the distribution of the bundle sizes, denoted $F$, is assumed to have support on $\{1, 2, \dots, m\}$ for some fixed $m \in\mathbb{N}_+$. 
 
The first result gives a differential equation satisfied by $\mathcal{K}(t,n,\lambda)$, which we recall is the expected number of packages that can be picked up by time $t$ when $n$ packages are arranged on a line. 
 
 \begin{theorem}\label{ode}
$\mathcal{K}(t,n,\lambda)$ satisfies the following recursive differential equation
\begin{equation*}
\frac{1}{\lambda}\frac{\partial\mathcal{K}(t,n,\lambda)}{\partial t}=-\sum_{j=1}^n F(j) \mathcal{K}(t,n,\lambda)+2\sum_{i=1}^{n-1}F(n-i)\mathcal{K}(t,i,\lambda)+\sum_{i=1}^n f(i)i(n+1-i)
\end{equation*}
with boundary condition $\mathcal{K}(0,n,\lambda)=0$. Moreover, $\mathcal{R}(t,n,\lambda) = n - \mathcal{K}(t,n,\lambda)$ satisfies the recursive differential equation:
\begin{equation} \label{OdeEq}
\frac{1}{\lambda}\frac{\partial\mathcal{R}(t,n,\lambda)}{\partial t} = -\sum_{i=1}^n F(i) \mathcal{R}(t,n,\lambda )  + 2\sum_{i=1}^{n-1}F(n-i) \mathcal{R}(t,i,\lambda)
\end{equation}
with boundary condition $\mathcal{R}(0,n,\lambda)=n$.
\end{theorem}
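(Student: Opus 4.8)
The plan is to derive the equation for $\mathcal{K}$ by a first-event decomposition in continuous time, and then obtain the equation for $\mathcal{R}=n-\mathcal{K}$ by direct substitution together with two elementary summation identities. For fixed $n$ the pick-up dynamics form a finite-state continuous-time Markov chain (the state being the set of still-available positions among $1,\dots,n$), so $\mathcal{K}(t,n,\lambda)$ is a smooth function of $t$ with $\mathcal{K}(0,n,\lambda)=0$, which makes the differentiation below legitimate. To obtain the differential equation I would condition on the behavior in the infinitesimal interval $[0,dt]$. There are $n$ independent rate-$\lambda$ Poisson clocks, one per location; with probability $1-n\lambda\,dt+o(dt)$ none rings. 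If the clock at location $i$ rings (probability $\lambda\,dt+o(dt)$), the requested bundle size equals $j$ with probability $f(j)$, and since at time $0^{+}$ every package is present, the request succeeds precisely when $i+j-1\le n$ (a request whose span overshoots position $n$ is rejected, exactly as in a DSP on a line of length $n$, and then nothing changes). When a successful size-$j$ request occurs at location $i$ it removes $j$ packages, and — by the memorylessness and independence of the surviving clocks on the two sides of the gap — the pick-ups on $[dt,t]$ decompose into two independent fresh DSP instances on lines of $i-1$ and $n-i-j+1$ packages, so this scenario contributes $j+\mathcal{K}(t-dt,i-1,\lambda)+\mathcal{K}(t-dt,n-i-j+1,\lambda)$ to $\mathcal{K}(t,n,\lambda)$. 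Collecting all contributions and letting $dt\to 0$ yields
\[
\frac{1}{\lambda}\frac{\partial\mathcal{K}(t,n,\lambda)}{\partial t}
= -\sum_{j=1}^{n} f(j)(n+1-j)\,\mathcal{K}(t,n,\lambda)
+ \sum_{j=1}^{n} f(j)\sum_{i=1}^{n+1-j}\bigl(j+\mathcal{K}(t,i-1,\lambda)+\mathcal{K}(t,n-i-j+1,\lambda)\bigr),
\]
with $f(j)=0$ for $j>m$ and $\mathcal{K}(t,0,\lambda)=0$.

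Next I would reduce this to the stated form by three bookkeeping steps: (i) the identity $\sum_{j=1}^{n}F(j)=\sum_{j=1}^{n}f(j)(n+1-j)$ (swap the order of summation in $\sum_{j}\sum_{k\le j}f(k)$), which simultaneously rewrites the coefficient of $\mathcal{K}(t,n,\lambda)$ and produces the source term $\sum_{i=1}^{n}f(i)\,i\,(n+1-i)$; (ii) the reflection $i\mapsto n+2-j-i$, under which $\{1,\dots,n+1-j\}$ is permuted and $n-i-j+1$ becomes $i-1$, so that $\sum_{i=1}^{n+1-j}\mathcal{K}(t,i-1,\lambda)=\sum_{i=1}^{n+1-j}\mathcal{K}(t,n-i-j+1,\lambda)$ and the two recursive sums merge into the factor $2$; and (iii) interchanging the order of summation in $\sum_{j}f(j)\sum_{\ell=1}^{n-j}\mathcal{K}(t,\ell,\lambda)$ and using $\sum_{j=1}^{n-\ell}f(j)=F(n-\ell)$ (together with $\mathcal{K}(t,0,\lambda)=0$) to rewrite it as $\sum_{\ell=1}^{n-1}F(n-\ell)\,\mathcal{K}(t,\ell,\lambda)$. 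The result is exactly the first displayed equation of the theorem, and the boundary condition is the one already recorded.

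For the second equation I would substitute $\mathcal{K}(t,i,\lambda)=i-\mathcal{R}(t,i,\lambda)$ into the equation just derived. The terms involving $\mathcal{R}$ reproduce the right-hand side of \eqref{OdeEq} verbatim, so it only remains to verify that the leftover $n$-dependent constants cancel, i.e. that
\[
n\sum_{j=1}^{n}F(j)\;=\;2\sum_{i=1}^{n-1}F(n-i)\,i\;+\;\sum_{i=1}^{n}f(i)\,i\,(n+1-i).
\]
Applying identity (i) to the left-hand side, and the companion identity $2\sum_{i=1}^{n-1}F(n-i)\,i=2\sum_{k=1}^{n-1}F(k)(n-k)=\sum_{j=1}^{n}f(j)(n-j)(n-j+1)$ (swap the order of summation and use $\sum_{k=j}^{n-1}(n-k)=\binom{n-j+1}{2}$) to the right-hand side, the latter collapses to $\sum_{i=1}^{n}f(i)(n+1-i)\bigl(i+(n-i)\bigr)=n\sum_{i=1}^{n}f(i)(n+1-i)$, matching the left-hand side. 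Finally $\mathcal{R}(0,n,\lambda)=n-\mathcal{K}(0,n,\lambda)=n$.

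The hard part will be the first-event decomposition: one must argue cleanly that, conditionally on the first successful pick-up, the two resulting segments evolve as independent and ``fresh'' DSP instances on lines. This rests on the independence and memorylessness of the per-location Poisson clocks, and on the observation that a request straddling the gap left by the removed bundle, or overshooting an endpoint of a segment, is automatically rejected — which is precisely the behavior of a DSP on the shorter line, so no boundary-correction term appears. Everything after that is the summation algebra indicated above.
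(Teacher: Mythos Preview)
Your proposal is correct and follows essentially the same approach as the paper: an infinitesimal first-event decomposition (conditioning on what happens in a short initial interval, using memorylessness to split the line into two independent fresh DSP instances after the first accepted request), followed by the same summation algebra (the identity $\sum_{j=1}^n F(j)=\sum_{j=1}^n f(j)(n+1-j)$, the reflection symmetry that produces the factor $2$, and Fubini to collapse $\sum_j f(j)\sum_\ell$ into $\sum_\ell F(n-\ell)$), and then the substitution $\mathcal{K}=n-\mathcal{R}$ with the verification that the constant terms cancel. If anything, your version is slightly cleaner: you condition on $[0,dt]$ where the configuration is known to be full, whereas the paper phrases the decomposition over $[t,t+\Delta]$, which makes the step ``$E[N(t+\Delta,n)\mid\text{arrival at }i]=\mathcal{K}(t,i-1)+\mathcal{K}(t,n-i-w+1)+w$'' read awkwardly (it is really the $[0,\Delta]$ decomposition in disguise, by time-homogeneity).
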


The corresponding solution is given by the following theorem. 

\begin{proposition}\label{recursive}
The solution to equation~\eqref{OdeEq} is given by
\begin{equation}
\mathcal{R}(t,n,\lambda)=
\left\{
\begin{array}{l@{\;\;}l}
\displaystyle \sum_{i=1}^n \gamma_{n,i} \, e^{-\lambda\sum_{j=1}^i F(j) t}&\text{ if } \displaystyle \sum_{j=1}^n F(j)>0\\
n& \text{ otherwise, }
\end{array}
\right.
\end{equation}
where the constants $\gamma_{n,i}$ can be computed recursively according to
$$\gamma_{n,n}= n -\sum_{i=1}^{n-1} \gamma_{n,i} \quad\text{and}\quad\gamma_{n,i}= 2 \cdot \frac{\sum_{j=1}^{n-i}F(j) \gamma_{n-j,i}}{\sum_{k=i+1}^{n}F(k)}, \quad 1 \leq i < n,$$
with boundary value $\gamma_{i,j}=1$ for all $i,j$ such that $F(i)=0$ and $j\leq i$. Furthermore, $\mathcal{C}(t,n,\lambda)$ is given by
$$\mathcal{C}(t,n,\lambda)=n- \sum_{i=1}^{n-1} \tilde\gamma_{n,i}e^{-\lambda\sum_{j=1}^i F(j) t}-\tilde \gamma_{n,n}e^{-\lambda nt},\text{ for  }n\geq m$$
where 
$$\tilde \gamma_{n,i}=\sum_{k=1}^{n-i}f(k)\frac{\gamma_{n-k,i}}{1-\frac{1}{n}\sum_{j=1}^i F(j)}\quad\text{ and }\quad\tilde \gamma_{n,n}=n-\sum_{i=1}^{n-1}\tilde \gamma_{n,i}.$$
\end{proposition}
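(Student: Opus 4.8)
The plan is to establish the formula for $\mathcal{R}(t,n,\lambda)$ by strong induction on $n$, exploiting the fact that \eqref{OdeEq} determines $\mathcal{R}(\cdot,n,\lambda)$ through a single scalar linear first-order ODE once $\mathcal{R}(\cdot,i,\lambda)$ is known for all $i<n$; the expression for $\mathcal{C}(t,n,\lambda)$ then follows by conditioning on the first accepted bundle request and invoking the reduction to the line model. Write $S_i=\sum_{j=1}^i F(j)$, so the asserted exponents are $\lambda S_i$, $1\le i\le n$, and $S_n=\sum_{j=1}^n F(j)$. First I would dispose of the degenerate range: if $S_n=0$ then $F(j)=0$ for all $j\le n$, the right-hand side of \eqref{OdeEq} vanishes, and $\mathcal{R}(t,n,\lambda)\equiv n$, which matches both the ``otherwise'' branch and the convention $\gamma_{n,j}=1$ for $j\le n$ (that convention forces $\sum_{j\le n}\gamma_{n,j}=n$). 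This settles the base case and every $n$ with $F(n)=0$. Assume now $F(n)>0$ and that the claimed representation holds for all indices below $n$; a key preliminary remark is that $F(n)>0$ forces $S_n-S_i=\sum_{k=i+1}^{n}F(k)>0$ for every $i<n$, so the denominators in the recursion for $\gamma_{n,i}$ are strictly positive and, crucially, no resonance occurs between the exponents $\lambda S_i$ $(i<n)$ and $\lambda S_n$ when the ODE is solved.

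Next I would substitute the inductive expressions $\mathcal{R}(t,i,\lambda)=\sum_{k=1}^i\gamma_{i,k}e^{-\lambda S_k t}$ $(i<n)$ into the forcing term of \eqref{OdeEq}; interchanging the order of summation and reindexing $j=n-i$ turns $2\sum_{i=1}^{n-1}F(n-i)\mathcal{R}(t,i,\lambda)$ into $2\sum_{k=1}^{n-1}\big(\sum_{j=1}^{n-k}F(j)\gamma_{n-j,k}\big)e^{-\lambda S_k t}$, so \eqref{OdeEq} becomes $\tfrac1\lambda\,\partial_t\mathcal{R}(t,n,\lambda)=-S_n\mathcal{R}(t,n,\lambda)+2\sum_{k=1}^{n-1}\big(\sum_{j=1}^{n-k}F(j)\gamma_{n-j,k}\big)e^{-\lambda S_k t}$. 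Solving this with integrating factor $e^{\lambda S_n t}$ and using linearity (so repeated values among the $S_k$ cause no difficulty), each forcing term $e^{-\lambda S_k t}$ contributes a particular-solution coefficient $2\big(\sum_{j=1}^{n-k}F(j)\gamma_{n-j,k}\big)/(S_n-S_k)$, which is exactly $\gamma_{n,k}$; the homogeneous part is $Ce^{-\lambda S_n t}$, and the boundary condition $\mathcal{R}(0,n,\lambda)=n$ fixes $C=n-\sum_{k=1}^{n-1}\gamma_{n,k}=\gamma_{n,n}$. Uniqueness for this linear initial value problem then gives $\mathcal{R}(t,n,\lambda)=\sum_{k=1}^n\gamma_{n,k}e^{-\lambda S_k t}$, closing the induction.

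For $\mathcal{C}(t,n,\lambda)$ I would condition on the first accepted request on the circle. Since all $n\ge m$ packages are present at time $0$, that first request is automatically accepted; it arrives at time $T_1\sim\mathrm{Exp}(n\lambda)$ and, independently, carries off a bundle of size $B\sim f$, after which the remaining $n-B$ packages lie on a line and the process continues as the line model, so $\mathcal{C}(t,n,\lambda)=\int_0^t n\lambda e^{-n\lambda s}\,\mathbb{E}\big[B+\mathcal{K}(t-s,n-B,\lambda)\big]\,ds$. Writing $\mathcal{K}(u,n-k,\lambda)=(n-k)-\mathcal{R}(u,n-k,\lambda)$, the integrand's expectation equals $n-\mathbb{E}[\mathcal{R}(t-s,n-B,\lambda)]$ (since $B+(n-B)=n$), which by the formula just proved is $n-\sum_{i=1}^{n-1}\big(\sum_{k=1}^{n-i}f(k)\gamma_{n-k,i}\big)e^{-\lambda S_i(t-s)}$; carrying out the elementary $s$-integrals — legitimate since $S_i\le n-1<n$ — and regrouping the $e^{-n\lambda t}$ terms yields exactly the stated expression, with $\tilde\gamma_{n,i}=\big(\sum_{k=1}^{n-i}f(k)\gamma_{n-k,i}\big)/\big(1-\tfrac1n S_i\big)$ and $\tilde\gamma_{n,n}=n-\sum_{i=1}^{n-1}\tilde\gamma_{n,i}$.

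The hard part is the bookkeeping in the inductive step — the double interchange of summations, the reindexing $j=n-i$, and the recognition that the integrating-factor computation reproduces the recursion for $\gamma_{n,i}$ verbatim — all carried out while keeping careful track of the degenerate cases $F(n)=0$ and of possible coincidences among the $S_i$; these coincidences are harmless precisely because $S_i\ne S_n$ for every $i<n$ whenever $F(n)>0$.
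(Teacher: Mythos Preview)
Your proposal is correct and follows essentially the same approach as the paper: strong induction on $n$ combined with the integrating-factor solution of the scalar linear ODE for $\mathcal{R}(\cdot,n,\lambda)$, followed by conditioning on the first (automatically accepted) request $T_n^*\sim\mathrm{Exp}(n\lambda)$ to obtain $\mathcal{C}(t,n,\lambda)$. The only differences are cosmetic---the paper writes the exponent as $\lambda n-\theta_n$ with $\theta_n=\lambda\sum_{j=1}^n\overline{F}(j)$ rather than your $\lambda S_n$, and carries out the integrating-factor computation for $n+1$ instead of $n$---but the logical structure and all the key observations (nonresonance $S_i\neq S_n$ when $F(n)>0$, the summation interchange producing the $\gamma_{n,i}$ recursion, and the convolution integral for $\mathcal{C}$) are identical.
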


Since in the setting of delivering packages that we consider here it is safe to assume that the number of packages $n$ will be large, we can compute the asymptotic proportion of packages that can be delivered by private drivers during the time interval $[0, t]$ as $n \to \infty$. The corresponding result is given below, and can be used when $n$ is large to simplify the computation of the cost function in Section~\ref{S.Pricing}.

\begin{theorem}\label{alpha}
Define 
$$\alpha(t,\lambda) = \lim_{n \to \infty} \frac{\mathcal{K}(t,n,\lambda)}{n}$$
to be the asymptotic proportion of delivered packages during the time interval $[0,t]$, with arrival rate $\lambda$. Then,
$$ \lim_{n \to \infty} \frac{\mathcal{C}(t,n, \lambda)}{n} = \alpha(t,\lambda) $$
and
\begin{align} 
\alpha(t,\lambda) &= 1 -  \int_{e^{-\lambda t}}^1 e^{2(\varphi(u)- \varphi(1)) } \hat q(t+(\ln u)/\lambda, u) \, du  -  (m-(m-1)e^{-\lambda t}) e^{2(\varphi(e^{-\lambda t}) - \varphi(1)) - \lambda t(m-\varphi'(1))} ,  \label{limit_alpha}
\end{align}
where 
$$\varphi(y)=\sum_{i=1}^{m-1}\frac{\overline{F}(i)}{i}y^i,$$
$\overline{F}(i) = 1 - F(i)$, $\mathcal{R}(t,n,\lambda) = n - \mathcal{K}(t,n,\lambda)$ and
$$\hat q(s,v) := 2 v^{m-\varphi'(1)-1} (1-v) \sum_{i=1}^{m-1} \mathcal{R}(s, i) - 2(1-v)^2 \sum_{i=1}^{m-1} \mathcal{R}(s, i) \sum_{j=m-i}^{m-1} \overline{F}(j) v^{i+j - \varphi'(1)-1}.$$
\end{theorem}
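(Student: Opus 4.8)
The plan is to reduce the claim to the asymptotics of $\mathcal{R}(t,n,\lambda)=n-\mathcal{K}(t,n,\lambda)$, since $\alpha(t,\lambda)=1-\lim_{n}\mathcal{R}(t,n,\lambda)/n$, and to obtain those asymptotics by running the recursion of Theorem~\ref{ode} through a generating function in $n$. The key structural point is that, because $F$ is supported on $\{1,\dots,m\}$, one has $\sum_{i=1}^{n}F(i)=n-\varphi'(1)$ with $\varphi'(1)=\sum_{i=1}^{m-1}\overline{F}(i)$ for every $n\ge m$, so for $n\ge m$ equation~\eqref{OdeEq} takes the ``generic'' form
\[
\tfrac1\lambda\,\partial_t\mathcal{R}(t,n,\lambda)=-(n-\varphi'(1))\,\mathcal{R}(t,n,\lambda)+2\sum_{i=1}^{n-1}\mathcal{R}(t,i,\lambda)-2\sum_{j=1}^{m-1}\overline{F}(j)\,\mathcal{R}(t,n-j,\lambda),
\]
while the finitely many functions $\mathcal{R}(t,1,\lambda),\dots,\mathcal{R}(t,m-1,\lambda)$ solve a small autonomous linear ODE system (another instance of~\eqref{OdeEq}) and are treated as known data.

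Set $H(t,z):=\sum_{n\ge m}\mathcal{R}(t,n,\lambda)z^n$, which converges for $|z|<1$ because $0\le\mathcal{R}(t,n,\lambda)\le n$, and translate the generic recursion termwise. Using $\sum_{n\ge m}n\,\mathcal{R}(t,n,\lambda)z^n=z\,\partial_zH$, the identity $\sum_{n\ge m}z^n\sum_{i=1}^{n-1}\mathcal{R}(t,i,\lambda)=\frac{z}{1-z}H+\frac{z^m}{1-z}\sum_{i=1}^{m-1}\mathcal{R}(t,i,\lambda)$, and $\sum_{n\ge m}z^n\sum_{j=1}^{m-1}\overline{F}(j)\mathcal{R}(t,n-j,\lambda)=z\varphi'(z)H+\sum_{j=1}^{m-1}\overline{F}(j)z^j\sum_{k=m-j}^{m-1}\mathcal{R}(t,k,\lambda)z^k$ (with $z\varphi'(z)=\sum_{j=1}^{m-1}\overline{F}(j)z^j$), one obtains the first-order linear PDE
\[
\tfrac1\lambda\,\partial_tH+z\,\partial_zH=\Big(\varphi'(1)+\tfrac{2z}{1-z}-2z\varphi'(z)\Big)H+\Phi(t,z),\qquad H(0,z)=\tfrac{z}{(1-z)^2}-\sum_{n=1}^{m-1}nz^n,
\]
\[
\Phi(t,z)=\frac{2z^m}{1-z}\sum_{i=1}^{m-1}\mathcal{R}(t,i,\lambda)-2\sum_{j=1}^{m-1}\overline{F}(j)z^j\sum_{k=m-j}^{m-1}\mathcal{R}(t,k,\lambda)z^k .
\]
Along the characteristics $z\mapsto ze^{-\lambda(t-s)}$ the equation is a scalar linear ODE whose coefficient integrates in closed form, since $\int\varphi'(u)\,du=\varphi(u)$ and $\int\frac{du}{1-u}=-\ln(1-u)$; this produces $H(t,z)$ explicitly as the sum of an initial-data term $\big(ze^{-\lambda t}-(1-ze^{-\lambda t})^2\sum_{n=1}^{m-1}n(ze^{-\lambda t})^n\big)(1-z)^{-2}e^{\lambda t\varphi'(1)}e^{2(\varphi(ze^{-\lambda t})-\varphi(z))}$ and a forcing integral $\lambda\int_0^t\Phi(s,ze^{-\lambda(t-s)})\,e^{\lambda(t-s)\varphi'(1)}(1-ze^{-\lambda(t-s)})^2(1-z)^{-2}e^{2(\varphi(ze^{-\lambda(t-s)})-\varphi(z))}\,ds$.

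Since $\varphi$ is a polynomial, $(1-z)^2H(t,z)$ extends to an entire function, so $H(t,z)$ is meromorphic on $\mathbb{C}$ with its only singularity a double pole at $z=1$; a partial-fraction argument then gives $\mathcal{R}(t,n,\lambda)=n\,g(t)+O(1)$ as $n\to\infty$ with $g(t):=\lim_{z\to1}(1-z)^2H(t,z)$, so $\alpha(t,\lambda)=1-g(t)$ exists. To evaluate $g(t)$: the initial-data part contributes $\big(e^{-\lambda t}-(1-e^{-\lambda t})^2\sum_{n=1}^{m-1}ne^{-\lambda nt}\big)e^{\lambda t\varphi'(1)}e^{2(\varphi(e^{-\lambda t})-\varphi(1))}$, which collapses via $(1-x)^2\sum_{n\ge m}nx^n=x^m\big(m-(m-1)x\big)$ to the term $(m-(m-1)e^{-\lambda t})\,e^{2(\varphi(e^{-\lambda t})-\varphi(1))-\lambda t(m-\varphi'(1))}$ of~\eqref{limit_alpha}; and the forcing part, after the substitution $u=ze^{-\lambda(t-s)}$ (so $s=t+(\ln u)/\lambda$, $\lambda\,ds=du/u$), becomes $\int_{e^{-\lambda t}}^{1}e^{2(\varphi(u)-\varphi(1))}\hat q(t+(\ln u)/\lambda,u)\,du$, where pulling a factor $1-u$ out of the first summand of $\Phi$ and reindexing the double sum of the second (its summation index $k$ playing the role of $i$ in $\hat q$) reproduces exactly the two summands of $\hat q$. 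Finally, $\mathcal{C}(t,n,\lambda)$ is handled by conditioning on the first accepted bundle: its size $B_1\le m$ and its arrival time is $\mathrm{Exp}(n\lambda)$-distributed, hence $\to 0$ in probability, and once it is removed the remaining $n-B_1$ packages lie on a line, so dominated convergence ($\mathcal{R}\le n$) and continuity of $\alpha$ in $t$ give $\mathcal{C}(t,n,\lambda)/n\to\alpha(t,\lambda)$ — alternatively this follows from the explicit formula for $\mathcal{C}$ in Proposition~\ref{recursive} together with $\mathcal{R}(t,n,\lambda)/n\to g(t)$.

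The step I expect to be the main obstacle is the bookkeeping around $\Phi$: confirming that the generic recursion is exact for $n\ge m$ and peeling off precisely the right boundary terms, carrying the two pieces of $\Phi$ through the characteristic integration without error — the first piece carries its own simple pole at $z=1$ that is only half-cancelled by the $(1-ze^{-\lambda(t-s)})^2$ generated along the flow, which is exactly why the first term of $\hat q$ has $(1-v)$ rather than $(1-v)^2$ — and matching the $z\to1$ limit of the $s$-integral term by term with the stated $\hat q$. A secondary but real point is making the analysis rigorous: justifying the $\partial_t/\sum_n$ interchange (legitimate since $|\partial_t\mathcal{R}(t,n,\lambda)|=O(n^2)$), verifying that the closed form has no singularity on $|z|=1$ other than the double pole at $z=1$ — this is precisely where bounded support of $F$ is used, as it rules out oscillatory behaviour of $\mathcal{R}(t,n,\lambda)/n$ — and passing $z\to1$ under the integral sign, which needs a uniform bound in $s$ on a neighbourhood of $z=1$.
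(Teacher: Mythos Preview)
Your approach is correct and, for the derivation of the PDE for the generating function $H(t,z)=\sum_{n\ge m}\mathcal{R}(t,n,\lambda)z^n$ and its solution along characteristics, essentially identical to the paper's. The real difference is in how you extract the coefficient asymptotics. The paper applies a Hardy--Littlewood Tauberian theorem to $(1-x)^2G(t,x)\to\beta(t)$, and this is why it needs the separate Lemma~\ref{MonotonicDiff} (monotonicity of $\mathcal{K}(t,n)+\kappa n$) to meet the Tauberian side condition. You instead observe that, because $F$ has bounded support, $\varphi$ is a polynomial and hence $(1-z)^2H(t,z)=\omega_t(z)$ is entire; subtracting the principal part $\beta(t)/(1-z)^2-\omega_t'(1)/(1-z)$ leaves an entire remainder whose Taylor coefficients decay superexponentially, so $\mathcal{R}(t,n)=n\beta(t)+(\beta(t)-\omega_t'(1))+o(1)$ directly. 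This singularity-analysis route is cleaner here: it bypasses Lemma~\ref{MonotonicDiff} entirely and in fact already yields the $O(n^{-1})$ rate of Theorem~\ref{convergence} as a by-product, whereas the paper proves that separately via another Tauberian step plus Proposition~\ref{BoundK}. The paper's Tauberian approach, on the other hand, would survive under weaker hypotheses on $F$ (e.g.\ unbounded support with suitable tails), where $\omega_t$ need not continue past $|z|=1$.

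Two small points on your write-up. First, your dominated-convergence argument for $\mathcal{C}(t,n)/n\to\alpha(t,\lambda)$ is a little loose as stated: you are taking a limit in $n$ with both the time $t-T_n^*$ and the index $n-B$ varying, so you need either the uniform-in-$t$ bound your partial-fraction argument actually gives (namely $\mathcal{R}(s,k)=k\beta(s)+O(1)$ with the $O(1)$ locally uniform in $s$), or the explicit formula in Proposition~\ref{recursive} as you note --- the paper does this via sandwich bounds from Lemma~\ref{MonotonicDiff}. Second, when you assert $\omega_t$ is entire, the term $x^{\varphi'(1)+1}$ in front of the integral looks problematic because $\varphi'(1)$ need not be an integer; the fix (which the paper carries out in Lemma~\ref{L.Differentiability}) is to note that $\hat q(s,v)v^{\varphi'(1)+1-m}$ is a polynomial in $v$, so the non-integer powers of $x$ and $u$ cancel and the integrand is a polynomial in $x$ for each $u$. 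You should make this explicit, since it is exactly the place where bounded support of $F$ enters your analyticity claim.
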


Note that to calculate $\alpha(t,\lambda)$, we only need to compute $\{ \gamma_{i,j} : 1 \leq j \leq i\}$ for $i<m$, since only the terms $\mathcal{R}(t + (\ln u)/\lambda, i, \lambda )$ for $1 \leq i \leq m-1$ appear in the expression for $\hat q(t+(\ln u)/\lambda, u)$.  Moreover, whenever $P(B = 1) > 0$ we have that $\lim_{t \to\infty} \mathcal{R}(t,i) = 0$ for all $i \geq 1$, which yields 
$$\lim_{t \to \infty} \hat q(t+(\ln u)/\lambda, u) = 0$$
for all $u \in [0,1)$, and therefore,
$$\lim_{t \to \infty} \alpha(t,\lambda) =1,$$
as expected.  Note that Theorem~\ref{alpha} can also be used to compute the limiting proportion of remaining packages when $P(B=1)=0$, in which case the quantity of interest is the number of remaining packages at time $t = \infty$.  For the special case $P(B = m) = 1$ $(m > 1)$ Pinsky \cite{pinsky2014problems}  obtained the formula
$$\alpha(\infty,\lambda) = m\exp \left(-2\sum_{j=1}^{m-1}\frac{1}{j} \right)\int_0^1 \exp\left(2\sum_{j=1}^{m-1}\frac{u^j}{j} \right) du = m\int_0^1 e^{2\varphi(u)-\varphi(1)} du$$
which can be derived from Theorem~\ref{alpha} by noting that $\varphi'(1) = m -1$ and
\begin{align*}
\lim_{t \to \infty} \hat q(t+(\ln u)/\lambda, u) &= 2(1-u) \sum_{i=0}^{m-1} i \left(1 - (1-u) \sum_{j=m-i}^{m-1} u^{i+j-m} \right)\\
&=2(1-u)\sum_{i=0}^{m-1}iy^i = 2(u\varphi'(u)-(m-1)u^m)\\
&= 2 \varphi'(u) (mu - (m-1)u^2 ) - 2(m-1)u 
\end{align*}
for all $\lambda > 0$, and therefore, 
\begin{align*}
\lim_{t \to \infty} \alpha(t,\lambda) &= 1 - \int_0^1 e^{2(\varphi(u)- \varphi(1)) } \lim_{t \to \infty} \hat q(t+(\ln u)/\lambda, u) \, du \\
&= 1 - \int_0^1 e^{2(\varphi(u) - \varphi(1))} 2\varphi'(y) (mu - (m-1)u^2) dy + 2(m-1) \int_0^1 e^{2(\varphi(u) - \varphi(1))}  u \, du \\
&=   \int_0^1 e^{2(\varphi(u) - \varphi(1))} (m - 2(m-1)u) \, du + 2(m-1) \int_0^1 e^{2(\varphi(u) - \varphi(1))}  y \, du  \\
&= m\int_0^1 e^{2(\varphi(u) - \varphi(1))} du. 
\end{align*}

In addition to the expression for $\alpha(t,\lambda)$ in Theorem~\ref{alpha}, we also provide the corresponding rate of convergence of $\frac{\mathcal{C}(t,n,\lambda)}{n}$ to $\alpha(t,\lambda)$. To the best of our knowledge, this is the first result regarding the rate of convergence for the limiting proportion of packages picked up by time $t$. Although we do not include the details of the proof in this paper, a similar set of arguments as those used in the proof of Theorem~\ref{convergence} also yield the result for $t = \infty$. Throughout the paper we use $f(x) = O(g(x))$ as $x \to \infty$ if $\limsup_{x \to \infty} |f(x)/g(x)| < \infty$.

\begin{theorem}\label{convergence}
For any fixed $t$,
$$\left|\frac{\mathcal{C}(t,n,\lambda)}{n}-\alpha(t,\lambda )\right|=O(n^{-1}), \quad \text{ as } n\rightarrow\infty .$$
\end{theorem}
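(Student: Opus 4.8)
The plan is to prove the equivalent estimate $\mathcal{C}(t,n,\lambda)=n\alpha(t,\lambda)+O(1)$ in three moves: reduce the circle quantity to the line quantity $\mathcal{R}(t,n,\lambda)=n-\mathcal{K}(t,n,\lambda)$ up to an $O(1)$ error, establish $\mathcal{R}(t,n,\lambda)=n\,(1-\alpha(t,\lambda))+O(1)$ via the generating function of $\{\mathcal{R}(t,n,\lambda)\}_{n}$, and combine. For the reduction I would condition on the first accepted request, which for $n\ge m$ always occurs: its time $\tau$ is exponential with rate $n\lambda$ and its bundle size $B$ has law $F$ (independent of $\tau$), and once it is picked up one is left with a line of $n-B$ packages. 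This gives
\[
\mathcal{C}(t,n,\lambda)=\sum_{k=1}^m f(k)\Big[k\big(1-e^{-n\lambda t}\big)+n\lambda\!\int_0^t e^{-n\lambda s}\,\mathcal{K}(t-s,n-k,\lambda)\,ds\Big].
\]
Since $\mathcal{K}(\cdot,n',\lambda)$ is nondecreasing in $t$ and the expected number of packages picked by the line process of $n'$ points over any window of length $s$ is at most $n'\lambda m s$, replacing $\mathcal{K}(t-s,n-k,\lambda)$ by $\mathcal{K}(t,n-k,\lambda)$ inside the integral costs at most $n\lambda\int_0^\infty e^{-n\lambda s}(n-k)\lambda m s\,ds=(n-k)m/n\le m$; the $e^{-n\lambda t}$ terms are $o(1)$. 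Using $\mathcal{K}(t,n-k,\lambda)=(n-k)-\mathcal{R}(t,n-k,\lambda)$ then yields $\mathcal{C}(t,n,\lambda)=n-\sum_{k}f(k)\mathcal{R}(t,n-k,\lambda)+O(1)$, so it is enough to prove $\mathcal{R}(t,n,\lambda)=n(1-\alpha(t,\lambda))+O(1)$.

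For that I would pass to $R(t,x)=\sum_{n\ge1}\mathcal{R}(t,n,\lambda)x^n$, which converges for $|x|<1$ since $0\le\mathcal{R}(t,n,\lambda)\le n$. Multiplying \eqref{OdeEq} by $x^n$ and summing over $n$ (legitimate since $|\partial_t\mathcal{R}(t,n,\lambda)|\le3\lambda n^2$) turns the recursion into the linear transport equation
\[
\tfrac1\lambda\,\partial_t R(t,x)+x\,\partial_x R(t,x)=\big(\varphi'(1)+2\Phi(x)\big)R(t,x)-Q(t,x),\qquad R(0,x)=\frac{x}{(1-x)^2},
\]
where $\Phi(x)=\sum_{\ell\ge1}F(\ell)x^\ell=\frac{x}{1-x}-\sum_{\ell=1}^{m-1}\overline{F}(\ell)x^\ell$ and $Q(t,x)$ is a polynomial in $x$ of degree at most $m-2$ whose coefficients are bounded linear combinations of $\mathcal{R}(t,1,\lambda),\dots,\mathcal{R}(t,m-2,\lambda)$. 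Solving along the characteristics $s\mapsto x_0e^{\lambda s}$, $x_0=xe^{-\lambda t}$ (using $\int\frac{\Phi(u)}{u}\,du=-\ln(1-u)-\varphi(u)$, which is precisely where the function $\varphi$ of Theorem~\ref{alpha} enters), I expect to obtain $R(t,x)=g(t,x)/(1-x)^2$ with
\[
g(t,x)=e^{\lambda\varphi'(1)t}e^{2(\varphi(x_0)-\varphi(x))}\!\left[x_0-\lambda\!\int_0^t\!\big(1-x_0e^{\lambda s}\big)^2e^{-\lambda\varphi'(1)s}e^{2(\varphi(x_0e^{\lambda s})-\varphi(x_0))}Q(s,x_0e^{\lambda s})\,ds\right].
\]
The point is that the factor $(1-x_0)^{-2}$ and the pole of $\Phi$ at $x=1$ that arise in the characteristic computation cancel against the $(1-x_0e^{\lambda s})^2$ in the numerator, so $g(t,\cdot)$ is in fact entire; hence the only singularity of $R(t,\cdot)$ is the double pole at $x=1$.

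Given this, the conclusion is immediate. Writing $g(t,x)=g(t,1)+\partial_x g(t,1)(x-1)+(x-1)^2h(t,x)$ with $h(t,\cdot)$ entire, one gets $R(t,x)=\frac{g(t,1)}{(1-x)^2}-\frac{\partial_x g(t,1)}{1-x}+h(t,x)$, hence $\mathcal{R}(t,n,\lambda)=[x^n]R(t,x)=(n+1)g(t,1)-\partial_x g(t,1)+[x^n]h(t,x)=n\,g(t,1)+O(1)$, the last step because $[x^n]h(t,x)\to0$ super-exponentially. Since Theorem~\ref{alpha} already gives $\mathcal{R}(t,n,\lambda)/n=1-\mathcal{K}(t,n,\lambda)/n\to1-\alpha(t,\lambda)$, necessarily $g(t,1)=1-\alpha(t,\lambda)$, so $\mathcal{R}(t,n-k,\lambda)=(n-k)(1-\alpha(t,\lambda))+O(1)=n(1-\alpha(t,\lambda))+O(1)$; plugging this into the reduction of the first step gives $\mathcal{C}(t,n,\lambda)=n\alpha(t,\lambda)+O(1)$, which is the claim after dividing by $n$.

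The hard part will be the generating-function step: one must check that the transport PDE is well posed on $\{|x|<1\}$ with $R(t,\cdot)$ as its unique solution, and, above all, verify rigorously that the apparent singularities coming from $(1-x_0)^{-2}$ and from the pole of $\Phi$ at $x=1$ genuinely cancel, so that $g(t,\cdot)$ extends analytically to a disk strictly larger than the unit disk. Everything else — the first-request conditioning and the coefficient extraction — is routine. (A more computational alternative avoids generating functions and instead refines the asymptotics of the coefficients $\gamma_{n,i}$ of Proposition~\ref{recursive}, showing $\gamma_{n,i}=\bar\gamma_i\,n+O(1)$ with the error growing at most polynomially in $i$, which is enough to sum against the geometric weight $e^{-\lambda\sum_{j\le i}F(j)t}$; the generating-function route seems cleaner and also covers the $t=\infty$ case by taking $t\to\infty$ in the formula for $g(t,x)$.)
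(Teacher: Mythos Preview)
Your proposal is correct and takes a genuinely different route from the paper. Both arguments rest on the identity $R(t,x)(1-x)^2=g(t,x)$ (the paper calls this $\omega_t(x)$; it is derived in the proof of Theorem~\ref{alpha}), but they diverge at the coefficient-extraction step. The paper applies a Tauberian theorem to $J(t,x)=\sum_n(\mathcal{K}(t,n)-n\alpha(t))x^n$, and since the Tauberian hypothesis requires eventual monotonicity of the coefficients, it must first establish a trichotomy (Proposition~\ref{BoundK}: $c_n(t)=\mathcal{K}(t,n)-n\alpha(t)$ is bounded, or eventually positive increasing, or eventually negative decreasing), which in turn leans on the combinatorial estimate of Lemma~\ref{MonotonicDiff}. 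You bypass all of this machinery by observing that $g(t,\cdot)$ is entire and reading off coefficients directly from the Taylor expansion at $x=1$; this in fact yields the sharper conclusion that $\mathcal{R}(t,n)-n(1-\alpha(t))$ converges, not merely that it is bounded. The entirety of $g$ is not hard to check once you have written down the characteristic solution: the $(1-x_0)^{-2}$ from the initial datum cancels against the $(1-x_0)^{2}$ coming from the integrating factor via $\int\Phi(u)u^{-1}\,du=-\ln(1-u)-\varphi(u)$, leaving only polynomials and exponentials of polynomials in $x$. The paper effectively verifies the same regularity in Lemma~\ref{L.Differentiability} (phrased there as $C^\infty$ on $[0,\infty)$, but the computation shows $x^{\varphi'(1)+1}\hat q(s,xu)$ is polynomial in $x$, hence $\omega_t$ is entire). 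What the paper's route buys is that it only needs $g$ to be $C^2$ near $x=1$ with bounded second derivative, not analytic beyond the unit disk---a weaker regularity hypothesis traded for the heavier monotonicity apparatus of Proposition~\ref{BoundK}. Your reduction from $\mathcal{C}$ to $\mathcal{R}$ via conditioning on the first accepted request is essentially the content of the paper's inequalities \eqref{eq:Cupper}--\eqref{eq:Clower}, carried out more explicitly.
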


We now move on to the pricing part of our model.

\subsection{Computing the reward for each package} \label{S.Pricing}

The overall goal of the proposed framework is to provide a pricing strategy for delivering $n$ different packages using a combination of private drivers and in-house delivery vans. Section \ref{MainResult} provided analytical results for the expected number of packages delivered during time $[0,T]$ as a function of the arrival rate $\lambda$, under the assumption that all packages are equally desirable ($\lambda$ is the same for all $n$ locations). As mentioned earlier, it is through the pricing mechanism that we will justify the modeling assumption on $\lambda$, since we would naturally expect that packages with remote destinations would receive fewer requests. Our payment scheme is based on the idea that the amount of money that a driver can make per unit of time should be the same for all packages, and we accomplish this by separating the costs associated to the destination of each package from those of a common ``incentive rate". The package specific costs will take into account factors such as the distance between the destination and the warehouse (long-haul distance) and the distance between neighboring packages with respect to the TSP route (local distance).  Once we have provided an expression for the cost of delivering packages through the use of private drivers, we will need to estimate the cost of delivering the remaining packages using in-house vans. The detailed description of our pricing for the delivery using private drivers is given in Section~\ref{SS.private}, and the corresponding vans' cost is given in Section~\ref{SS.vans}.

\subsubsection{Rewards for private drivers} \label{SS.private}

For the delivery process, assume that both private drivers and in-house vans must pick up the packages they will be delivering from the distribution center.  At the beginning of the day, the destinations of the $n$ packages to be delivered that day are revealed and an optimal TSP tour is computed. Let $\{{\bf x}_1,..., {\bf x}_n\} \subseteq \mathbb{R}^2$ denote the destinations of the $n$ packages, where their indexes correspond to their ``locations" within the TSP route. The distribution center is assumed to be at the origin. We denote by $\hat{B}^{(i)}$ the size of the bundle of the first request at location $i$ to be accepted. Note that the distribution of $\hat{B}^{(i)}$ is not $F$ since the acceptance depends on the configuration of available packages at the time of the request. 

The overall cost for private drivers to deliver packages consists of transportation costs and opportunity costs, since drivers can also choose to work for Uber-like companies or take other jobs instead. We use the following quantities in our cost estimation; $d({\bf x}, {\bf y})$ denotes the distance on $\mathbb{R}^2$:

\begin{itemize} 
\item $r_i = d({\bf x}_i, \boldsymbol{0})$ denotes the distance from the depot to the destination of package $i$ (the long-haul distance).
\item $d_i = (d({\bf x}_{i-1}, {\bf x}_i) + d({\bf x}_i, {\bf x}_{i+1}))/2$ is the average distance between the destinations of packages $i-1$ and $i$, and $i$ and $i+1$, for $2\leq i\leq n-1$, $ d_1=(d({\bf x}_{n}, {\bf x}_1) + d({\bf x}_1, {\bf x}_{2}))/2$, $ d_n=(d({\bf x}_{n-1}, {\bf x}_n) + d({\bf x}_n, {\bf x}_{1}))/2$  (the local distance).
\item $\zeta_P$ is the per-mile transportation cost.
\item $h_P$ is the opportunity cost per unit of time.
\item $\tau_P$ is the end-point delivery time.
\item $v_P$ is the average speed of private cars.
\end{itemize}

The decision variable in our pricing model will be an incentive rate $z$ that each driver will receive in addition to the opportunity cost, i.e., the total payment rate that a driver receives per unit of time is $h_P+z$. Note that the only quantities in the cost that depend on the geographic location of the package destinations are the $\{r_i\}$ and the $\{d_i\}$. The $\{ r_i\}$ can be computed as soon as the destinations $\{ {\bf x}_i\}$ are revealed, while the $\{d_i\}$ are determined by the TSP route.

The traveling distance associated with a bundle of packages at locations $\{i, i+1,\dots, i+k-1\}$ is $r_i+\sum_{j=i}^{i+k-2}d({\bf{x}}_j,{\bf{x}}_{j+1})$. Thus, the price set for the bundle should be 
 \begin{equation}\label{price.bundle}
\text{price}_1 = \zeta_P \left(r_i+\sum_{j=i}^{i+k-2}d({\bf{x}}_j,{\bf{x}}_{j+1})\right)+(h_P+z)\left(r_i/v_P +\sum_{j=i}^{i+k-2}d({\bf{x}}_j,{\bf{x}}_{j+1})/v_P+k\tau_P\right).
\end{equation}
However, since the number of possible bundles increases geometrically with the number of total packages, it is computationally expensive to set a price for every possible bundle. Therefore, we consider instead a pricing scheme for each individual package, regardless of which bundle it will be included in. To derive this price, suppose that package $j$ is delivered as part of a bundle of size $k$, and start by prorating the long-haul cost among all the $k$ packages, and separate the contribution of package $j$ to the local distance. To incorporate into the pricing longer neighboring distances between adjacent packages in the TSP tour, we determine the contribution of package $j$ to the local distance to be the average of the distances to both the neighbor to the left and the neighbor to the right, i.e., $d_j = (d({\bf x}_{j-1}, {\bf x}_j) + d({\bf x}_j, {\bf x}_{j+1}))/2$. We also argue that the long-haul cost of package $j$ is approximately the same as that of other packages in the same bundle, and therefore if $j$ is part of a bundle accepted at location $i$, then $r_j \approx r_i$. We then propose the payment reward for package $j$ to be:
\begin{equation} \label{eq:Prorated}
\zeta_P \left( \frac{r_j}{k}+d_j \right)+(h_P +z) \left( \frac{r_j}{kv_P} + \frac{d_j}{v_P}+\tau_P \right).
\end{equation}
Using \eqref{eq:Prorated} we obtain a price for a bundle of size $k$ accepted at location $i$ of the form:
 \begin{align}\label{price.package}
\text{price}_2 &= \zeta_P \left( \frac{1}{k} \sum_{j=i}^{i+k-1} r_j+\sum_{j=i}^{i+k-1} d_j \right) +(h_P+z)\left( \frac{1}{k v_P} \sum_{j=i}^{i+k-1}r_j +\sum_{j=i}^{i+k-1}d_j/v_P+k\tau_P \right)  \\
&= \text{price}_1 + \left( \zeta_P + \frac{h_P+z}{v_P} \right) \left( \frac{1}{k} \sum_{j=i}^{i+k-1} r_j - r_i \right) \notag \\
&\hspace{5mm} + \frac{1}{2}(d({\bf x}_{i-1}, {\bf x}_{i})+ d({\bf x}_{i+k-2}, {\bf x}_{i+k-1}))\left(\zeta+\frac{h_P +z}{v_P}\right) .  \notag
 \end{align}
Note that the difference between \eqref{price.bundle} and \eqref{price.package} is small whenever adjacent packages in the TSP tour are small, which we expect to be true for large $n$. To the best of our knowledge, there are no theoretical results about the distribution of the neighboring distance in an optimal TSP route, although our numerical experiments (see Section~\ref{SS.TSPneighbor}) do indeed suggest that this will be the case.

To obtain our proposed expression for the cost to deliver each of the $n$ packages we also need to take into account that package $j$ could be delivered as part of a number of different bundles, e.g., it would be delivered in bundle $\{j, \dots, j+\hat B^{(j)}-1 \}$ or it could be contained in a bundle of the form $\{i, \dots, j, \dots, i+\hat B^{(i)}-1\}$ for some $i < j$. Since the exact computation of the distribution of the size of the bundle containing $j$ is too complex, we approximate it with $E[B]$ to obtain the following price for package $j$:
\begin{equation} \label{eq:PriceJ}
p_j := \zeta_P \left( \frac{r_j}{E[B]}+d_j \right)+(h_P+z) \left( \frac{r_j}{E[B] v_P} + \frac{d_j}{v_P}+\tau_P \right).
\end{equation}
Using the same type of arguments, we estimate the time required to deliver package $j$ to be:
$$t_j := \frac{r_j}{E[B]v_P} + \frac{d_j}{v_P} + \tau_P.$$

Note that by deriving our pricing mechanism the way we did we have made the profit for the drivers to be the same regardless of which package(s) they choose to deliver. This profit is determined in our pricing scheme by the incentive rate $z$, which is the same for all $n$ packages and is linear in the number of packages that a driver chooses to deliver. Moreover, the incentive rate will be used to control the arrival rate for requests in our calculations from Section~\ref{MainResult} by setting $\lambda = \lambda(z)$ to be an non-decreasing function. It remains to set up an optimization problem for determining the best incentive rate to use. 

To this end, start by noting that the sum of the prices for all $n$ packages after their destinations are revealed satisfies:
\begin{align*}
\sum_{i=1}^n p_i = \left(\zeta_P +\frac{h_P +z}{v_P}\right)\left( \frac{1}{E[B]} \sum_{i=1}^n r_i+L(\text{TSP}({\bf x}^{(n)}))\right)+n(h_P+z)\tau_P,
\end{align*}
where $L(\text{TSP}({\bf x}^{(n)}))$ is the length of an optimal TSP route for points with destinations ${\bf x}^{(n)} :=\{{\bf x}_1,{\bf x}_2,...,{\bf x}_n\}$. Since the probability that package $i$ will be picked up before time $T$ given incentive rate $z$ is $
\mathcal{C}(T,n,\lambda(z))/n$ (recall that packages are arranged on a circle, and are therefore undistinguishable), the expected payment for private drivers (conditional on ${\bf x}^{(n)}$) is 
\begin{equation} \label{eq:CostPrivateDrivers}
\frac{\mathcal{C}(T,n,\lambda(z))}{n}\left(\zeta_P+\frac{h_P+z}{v_P}\right)\left(\frac{1}{E[B]} \sum_{i=1}^n r_i+L(\text{TSP}({\bf x}^{(n)}))\right)+\mathcal{C}(T,n,\lambda(z))(h_P+z) \tau_P.
\end{equation}

It remains to compute the cost associated to delivering the remaining packages using the in-house van service, which we do in the following section.

\subsubsection{Van's cost to deliver leftover packages} \label{SS.vans}

After time $T$, all leftover packages will be delivered by vans owned by the distribution center. The delivery route for a van is designed by an optimal CVRP, which is also NP-hard. For more detailed information about the algorithms that can be used to solve the CVRP problem, we refer readers to~\cite{fukasawa2006robust,golden2008vehicle,toth2002vehicle}. 

The van's operating cost includes the per-mile cost for vans and time-based wages for the drivers. The per-mile cost includes the cost of fuel, maintenance, repairs, depreciation, etc., and is denoted by $\zeta_V$. To compute the time-based wages for the drivers note that the time they spend delivering packages includes the time driving along an optimal CVRP route and the time on the end-point delivery. Let $h_V, v_V,\tau_V$ denote the drivers' payment rate, the vans' average speed, and the end-point delivery time, respectively. Then, the total cost for delivering $k$ packages using vans is 
$$\left(\zeta_V + \frac{h_V}{v_V}\right) L(\text{CVRP}({\bf y}^{(k)}))+k h_V\tau_V,$$
where $L(\text{CVRP}({\bf y}^{(k)}))$ is the length of capacitated vehicle routing through the points ${\bf y}^{(k)}=\{{\bf y}_1,{\bf y}_2,..,{\bf y}_k\}$.

In the context of our problem, the number of packages that will need to be delivered after time $T$ is random, and so are their destinations, which we will denote by ${\bf Y}^{(n)}(\lambda) = \{ {\bf Y}_1, \dots, {\bf Y}_{n-\tilde N(T,n,\lambda)} \}$, where $\tilde N(T,n,\lambda)$ is the number of packages that can be delivered during $[0,T]$ when we start with $n$ packages arranged on a circle.  It follows that the expected cost to deliver the remaining packages, conditionally on the destinations ${\bf x}^{(n)}$, is given by
\begin{equation} \label{eq:IdealCostRemainingPackages}
\left(\zeta_V +\frac{h_V}{v_V}\right) \mathbb{E}_n\left[ L(\text{CVRP}({\bf Y}^{(n)}(\lambda) )) \right] + \mathbb{E}_n[ n - \tilde N(T,n,\lambda(z)) ]  h_V \tau_V,
\end{equation}
where $\mathbb{E}_n[ \, \cdot \, ] = E[ \, \cdot \, | {\bf x}^{(n)}]$. However the computation of the conditional expectation appearing in \eqref{eq:IdealCostRemainingPackages} would be hard to do, both from the point of view of the distribution of ${\bf Y}^{(n)}(\lambda)$ and of the length of the CVRP itself. Therefore, we use a continuous approximation result (see \cite{daganzo1984distance}) that is known to work well when the capacity of the vans is significantly smaller than the number of packages that need to be delivered ($V = o(\sqrt{k})$, where $k$ is the number of packages), which yields
$$L(\text{CVRP}({\bf y}^{(k)}) )  \approx  \frac{2k}{V}  \overline{r}(k) + \beta_\text{VRP} \sqrt{k A},$$
whenever the destinations $\{ {\bf y}_1, \dots, {\bf y}_k\}$ are uniformly distributed over a region with area $A$. The constant $ \beta_\text{VRP}$ is estimated to be $0.82$ when using the $L_1$ distance (see Appendix~A in 
 \cite{daganzo2005logistics}). The paper \cite{snyder1995equidistribution} proves a strong result stating that in an asymptotic sense the worst-case point sets are uniformly distributed.

Replacing $k$ with $\mathbb{E}_n[ n - \tilde N(T,n,\lambda(z)) ] = n - \mathcal{C}(T,n,\lambda(z))$ and $\overline{r}(k)$ with $\overline{r}(n) = n^{-1} \sum_{i=1}^n r_i$ in the previous inequality gives the more tractable cost:
\begin{align} \label{eq:CostRemainingPackages}
&\left(\zeta_V+\frac{h_V}{v_V}\right) \left(  \frac{2(n - \mathcal{C}(T,n,\lambda(z)))  \overline{r}(n)}{V}  + \beta_\text{VRP} \sqrt{(n- \mathcal{C}(T,n,\lambda(z))) A} \right) + (n - \mathcal{C}(T,n,\lambda(z))) h_V \tau_V.
\end{align}

\subsubsection{Joint optimization problem}

Putting together our cost estimations for both the private drivers \eqref{eq:CostPrivateDrivers} and that of delivering the remaining packages using the in-house vans \eqref{eq:CostRemainingPackages}, we obtain the following expected cost function for delivering $n$ packages with destinations ${\bf x}^{(n)} = \{ {\bf x}_1, \dots, {\bf x}_n\}$ using the incentive rate $z$:
\begin{align*}
\text{Cost}(z; {\bf x}^{(n)}) &:=  \frac{\mathcal{C}(T,n,\lambda(z))}{n}\left(\zeta_P +\frac{h_P+z}{v_P}\right)\left(\frac{n \overline{r}(n) }{E[B]} + L(\text{TSP}({\bf x}^{(n)}))\right) \\
&\hspace{5mm} +\mathcal{C}(T,n,\lambda(z))(h_P+z) \tau_P  +(n - \mathcal{C}(T,n,\lambda(z))) h_V \tau_V \\
&\hspace{5mm}  + \left(\zeta_V+\frac{h_V}{v_V}\right) \left(  \frac{2(n - \mathcal{C}(T,n,\lambda(z))) \overline{r}(n)}{V}   + \beta_\text{VRP} \sqrt{(n- \mathcal{C}(T,n,\lambda(z))) A} \right),
\end{align*}
where $V$ is the capacity of the vans used by the distribution center, $A$ is the area of the plane where the package destinations ${\bf x}^{(n)}$ lie, $\overline{r}(n) = n^{-1} \sum_{i=1}^k r_i$, and $\beta_\text{VRP}$ can be taken to be equal to 0.82 when using the $L_1$ metric.  We will now argue that it suffices to minimize $\text{Cost}(z; {\bf x}^{(n)})$ over a bounded interval. The arrival rate $\lambda(z)$ can be taken to be any non-negative monotone non-decreasing and differentiable almost everywhere function on the real line, e.g., linear or piecewise linear. 

Note that the distribution center will be paying private drivers $\zeta_P + (h_P+z)/v_P$ per mile travelled, plus $(h_P + z)\tau_P$ per package delivered, while it will pay its van drivers $\zeta_V + h_V/v_V$ per mile travelled and $h_V \tau_V$ per package delivered. Hence, in order for a strategy using private drivers to even make sense, we would need at least one of the following to hold:
$$\zeta_V + \frac{h_V}{v_V} > \zeta_P + \frac{h_P + z}{v_P} \qquad \text{or} \qquad h_V \tau_V > (h_P + z) \tau_P.$$
In other words,
$$z \leq  \max\left\{  \left( \zeta_V + \frac{h_V}{v_V} - \zeta_P \right) v_P , \, \frac{h_V\tau_V}{\tau_P}  \right\} - h_P. $$
On the other hand, since the payment rate to private drivers (not including the per-mile transportation cost $\zeta_P$) must be nonnegative, we have that $h_P + z \geq 0$, which implies that 
$$z \geq - h_P.$$

In view of the above, we propose to compute the optimal incentive rate $z^*$ by solving:
\begin{equation} \label{eq:Optimization}
\min_{-h_P \leq z \leq  \max\left\{  \left( \zeta_V + \frac{h_V}{v_V} - \zeta_P \right) v_P , \, \frac{h_V\tau_V}{\tau_P}  \right\} - h_P} \text{Cost}(z; {\bf x}^{(n)}).
\end{equation}
 The optimization problem is meant to be solved at the beginning of the day, once the destinations ${\bf x}^{(n)}$ are revealed. Moreover, in view of Theorems~\ref{alpha} and \ref{convergence}, when the exact computation of $\mathcal{C}(T,n,\lambda(z))$ is too costly, we can approximate it with $n\alpha(T,\lambda(z))$.  Once the optimal $z^*$ has been found the reward offered to private drivers for delivering package $i$ is $p_i$, as given by \eqref{eq:PriceJ}.

We point out that since $\mathcal{C}(T,n,\lambda(z))$ is infinitely differentiable in $z \in \mathbb{R}$ whenever $\lambda(z)$ is (see Proposition~\ref{recursive}), that solving the minimization problem in \eqref{eq:Optimization} can be done very efficiently. The problem is to be solved on a daily basis as soon as the destinations ${\bf x}^{(n)} = \{{\bf x}_1, \dots, {\bf x}_n\}$ are revealed.

We conclude the main results by providing sufficient conditions under which the expected cost to deliver $n$ packages with destinations ${\bf x}^{(n)} = \{ {\bf x}_1, \dots, {\bf x}_n\}$ using a combination of private drivers and in-house vans is smaller than that of using only in-house vans. For this comparison we use the more precise version of the expected cost for the private drivers strategy:
\begin{align*}
\text{Cost}_{P}(z; {\bf x}^{(n)}) &:= \frac{\mathcal{C}(T,n,\lambda(z))}{n}\left(\zeta_P+\frac{h_P+z}{v_P}\right)\left(\frac{n \overline{r}(n) }{E[B]} +L(\text{TSP}({\bf x}^{(n)}))\right) \\
&\hspace{5mm} +\mathcal{C}(T,n,\lambda(z))(h_P+z) \tau_P +  (n - \mathcal{C}(T,n,\lambda(z))  h_V \tau_V\\
&\hspace{5mm} + \left(\zeta_V+\frac{h_V}{v_V}\right) \mathbb{E}_n\left[ L(\text{CVRP}({\bf Y}^{(n)}(\lambda(z)) )) \right] ,
\end{align*}
and compare it to the expected cost of the van-only strategy:  
$$\text{Cost}_{V}({\bf x}^{(n)}) := \left( \zeta_V + \frac{h_V}{v_V} \right)  L(\text{CVRP}({\bf x}^{(n)}) )) + n h_V \tau_V.$$

\begin{lemma} \label{L.CompareStrategies}
Suppose the destinations ${\bf x}^{(n)} = \{ {\bf x}_1, \dots, {\bf x}_n\}$ are contained in a compact region $R \subseteq \mathbb{R}^2$ and are such that the limit $r^* = \lim_{n \to \infty} \overline{r}(n)$ exists. Then, for any $z \in \mathbb{R}$ and $\lambda = \lambda(z)$ we have 
\begin{align*}
&\limsup_{n \to \infty} \frac{1}{n} \left( \text{Cost}_{P}(z; {\bf x}^{(n)}) - \text{Cost}_V({\bf x}^{(n)}) \right)  \\
&\leq - \alpha(T,\lambda) \left(  \left(\zeta_V+\frac{h_V}{v_V}\right) \frac{2r^*}{V} -  \left(\zeta_P+\frac{h_P}{v_P}\right) \frac{r^*}{E[B]}  - (h_P\tau_P - h_V\tau_V) - z \left( \frac{r^*}{v_P E[B]} + \tau_P   \right)  \right)  \quad \text{a.s.}
\end{align*}
 Moreover, whenever
$$\left(\zeta_V+\frac{h_V}{v_V}\right) \frac{2r^*}{V} -  \left(\zeta_P+\frac{h_P}{v_P}\right) \frac{r^*}{E[B]}  - (h_P\tau_P - h_V\tau_V) >  0,$$
there exists a $z \in \mathbb{R}$ for which the upper bound for the limit superior is strictly negative, i.e., for which the private drivers strategy is better than the van-only strategy. 
\end{lemma}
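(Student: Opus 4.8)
The plan is to expand $\tfrac1n\big(\text{Cost}_{P}(z;{\bf x}^{(n)})-\text{Cost}_V({\bf x}^{(n)})\big)$, cancel the per-stop van term $nh_V\tau_V$ common to both costs, and take the limit (or $\limsup$) of the resulting pieces one at a time. Writing $\lambda=\lambda(z)$, after cancellation $\tfrac1n\big(\text{Cost}_{P}-\text{Cost}_V\big)$ equals the sum of the four pieces $\text{(i)}=\tfrac{\mathcal{C}(T,n,\lambda)}{n}\big(\zeta_P+\tfrac{h_P+z}{v_P}\big)\big(\tfrac{\overline{r}(n)}{E[B]}+\tfrac{1}{n}L(\text{TSP}({\bf x}^{(n)}))\big)$, $\text{(ii)}=\tfrac{\mathcal{C}(T,n,\lambda)}{n}(h_P+z)\tau_P$, $\text{(iii)}=-\tfrac{\mathcal{C}(T,n,\lambda)}{n}h_V\tau_V$, and $\text{(iv)}=\big(\zeta_V+\tfrac{h_V}{v_V}\big)\big(\tfrac1n\mathbb{E}_n[L(\text{CVRP}({\bf Y}^{(n)}(\lambda)))]-\tfrac1n L(\text{CVRP}({\bf x}^{(n)}))\big)$. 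For pieces (i)--(iii) I would invoke Theorem~\ref{alpha} (so $\mathcal{C}(T,n,\lambda)/n\to\alpha(T,\lambda)$), the hypothesis $\overline{r}(n)\to r^*$, and the classical fact that an optimal TSP tour through $n$ points of a bounded region of $\mathbb{R}^2$ has length $O(\sqrt n)$, whence $L(\text{TSP}({\bf x}^{(n)}))/n\to 0$; then (i)--(iii) converge respectively to $\alpha(T,\lambda)\big(\zeta_P+\tfrac{h_P+z}{v_P}\big)\tfrac{r^*}{E[B]}$, to $\alpha(T,\lambda)(h_P+z)\tau_P$, and to $-\alpha(T,\lambda)h_V\tau_V$.

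The heart of the argument is piece (iv), where I would bound the two routing lengths from opposite sides. Lower bound on $L(\text{CVRP}({\bf x}^{(n)}))$: any vehicle route serves at most $V$ packages and, by the triangle inequality, has length at least twice the depot-distance of its farthest customer, hence at least $\tfrac2V$ times the sum of the depot-distances of the customers it serves; summing over routes gives $L(\text{CVRP}({\bf x}^{(n)}))\ge\tfrac2V\sum_{i=1}^n r_i$, so $\liminf_n \tfrac1n L(\text{CVRP}({\bf x}^{(n)}))\ge\tfrac{2r^*}{V}$. Upper bound on $\mathbb{E}_n[L(\text{CVRP}({\bf Y}^{(n)}(\lambda)))]$: the iterated-tour-partitioning construction applied to the leftover customers (split an optimal TSP tour of them into consecutive arcs of at most $V$ customers and join each arc to the depot at suitably chosen split points) gives, deterministically, $L(\text{CVRP}({\bf Y}^{(n)}(\lambda)))\le\tfrac2V\sum_{i:\,{\bf x}_i\in{\bf Y}^{(n)}(\lambda)}r_i+L(\text{TSP}({\bf Y}^{(n)}(\lambda)))\le\tfrac2V\sum_{i:\,{\bf x}_i\in{\bf Y}^{(n)}(\lambda)}r_i+O(\sqrt n)$, the last step using $|{\bf Y}^{(n)}(\lambda)|\le n$ and the $O(\sqrt{\cdot})$ TSP bound again. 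Taking $\mathbb{E}_n$ and using the rotational symmetry of the circular arrangement — each package is left over with the same probability $1-\mathcal{C}(T,n,\lambda)/n$, so $\mathbb{E}_n[\sum_{i:\,{\bf x}_i\in{\bf Y}^{(n)}(\lambda)}r_i]=n\,\overline{r}(n)\big(1-\mathcal{C}(T,n,\lambda)/n\big)$, exactly the same identity as $\mathbb{E}_n[\tilde N(T,n,\lambda)]=\mathcal{C}(T,n,\lambda)$ used in Section~\ref{S.Pricing} — yields $\limsup_n \tfrac1n\mathbb{E}_n[L(\text{CVRP}({\bf Y}^{(n)}(\lambda)))]\le\tfrac{2r^*}{V}\big(1-\alpha(T,\lambda)\big)$. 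Since $\zeta_V+h_V/v_V\ge0$, combining the two one-sided estimates via $\limsup(a_n-b_n)\le\limsup a_n-\liminf b_n$ shows $\limsup_n\text{(iv)}\le-\big(\zeta_V+\tfrac{h_V}{v_V}\big)\tfrac{2r^*}{V}\alpha(T,\lambda)$. Adding the four contributions and regrouping reproduces the claimed right-hand side term for term (in fact the full limit exists; the $\limsup$/$\le$ phrasing is needed only because (iv) is controlled one-sidedly). A quicker but less rigorous route replaces the two-sided routing estimates by the continuous approximation for $L(\text{CVRP})$ recorded in Section~\ref{S.Pricing}.

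For the last assertion, note the claimed upper bound, as a function of $z$, equals $-\alpha(T,\lambda(z))\,(C_0-c_1z)$ with $C_0:=\big(\zeta_V+\tfrac{h_V}{v_V}\big)\tfrac{2r^*}{V}-\big(\zeta_P+\tfrac{h_P}{v_P}\big)\tfrac{r^*}{E[B]}-(h_P\tau_P-h_V\tau_V)$ and $c_1:=\tfrac{r^*}{v_PE[B]}+\tau_P>0$. The displayed hypothesis is precisely $C_0>0$; hence $C_0-c_1z>0$ for all $z<C_0/c_1$, an interval that contains $0$, and picking such a $z$ with $\lambda(z)>0$ (which holds for any non-degenerate rate function, e.g. one positive at $z=0$) makes $\alpha(T,\lambda(z))>0$ since $\alpha(T,\lambda)>0$ whenever $\lambda>0$; then $-\alpha(T,\lambda(z))(C_0-c_1z)<0$, as required.

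\textbf{Main obstacle.} Piece (iv): producing routing-length bounds that match up to $o(n)$, in particular the iterated-tour-partitioning upper bound for the \emph{random} leftover set ${\bf Y}^{(n)}(\lambda)$ and its interchange with $\mathbb{E}_n$, together with the symmetry identity for the marginal leftover probabilities. Everything else reduces to bookkeeping once $\mathcal{C}(T,n,\lambda)/n\to\alpha(T,\lambda)$ and the $O(\sqrt n)$ TSP bound are in hand.
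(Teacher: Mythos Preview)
Your proposal is correct and follows essentially the same route as the paper. The paper invokes Theorem~\ref{T.Daganzo} (the Haimovich--Rinnooy~Kan bounds) to obtain the lower bound $L(\text{CVRP}({\bf x}^{(n)}))\ge 2n\overline r(n)/V$ and the upper bound $L(\text{CVRP}({\bf Y}^{(n)}))\le 2(|{\bf Y}^{(n)}|/V+1)\mathscr R+L(\text{TSP}({\bf Y}^{(n)}))$, then applies Theorem~\ref{T.TSPbound} for $L(\text{TSP})=O(\sqrt n)$ and the same circular-symmetry identity $\mathbb E_n[d({\bf Y}_i,\boldsymbol 0)\mid \tilde N]=\overline r(n)$ that you use; you instead rederive those two CVRP bounds by hand (triangle inequality for the lower bound, iterated-tour partitioning for the upper), which is exactly what Theorem~\ref{T.Daganzo} encodes. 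The only cosmetic slip is that iterated-tour partitioning produces $2(\lceil k/V\rceil)\overline r(k)+L(\text{TSP})$ rather than $\tfrac2V\sum r_i+L(\text{TSP})$, but the discrepancy is $O(1)$ and washes out after dividing by $n$.
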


\section{Proofs} \label{S.Proofs}

In this section, we give all the proofs of the theorems in Section~\ref{SS.MainResultDSP} and Section~\ref{S.Pricing}. The analysis of $\mathcal{C}(t,n,\lambda)$ is based on the observation that once the first package is picked up (which is guaranteed to be accepted whenever $n \geq m$), the remaining packages can be arranged on a line. Therefore, the proofs of all our results are based on the analysis of $\mathcal{K}(t,n,\lambda)$, the expected number of packages that can be picked up by private drivers during the interval $[0, t]$, when there are $n$ packages arranged on a line. Moreover, we point out that if $B$ has distribution $F$ and $T^*_n$ denotes the time of the first request when we have $n$  packages arranged on a circle, then 
$$\mathcal{C}(t,n,\lambda) = E\left[ \left( B + \mathcal{K}(t- T_n^*, n - B) \right) 1(T_n^* \leq t) \right],$$
with $T_n^*$ exponentially distributed with rate $\lambda n$. 

Throughout this section we simplify the notation by omitting $\lambda$ from $\mathcal{C}(t,n,\lambda)$, $\mathcal{K}(t,n,\lambda)$, $\mathcal{R}(t,n,\lambda)$, etc., and simply write $\mathcal{C}(t,n)$, $\mathcal{K}(t,n)$, $\mathcal{R}(t,n)$; all the proofs in this section are valid for any fixed $\lambda > 0$. We also use $f(k) = P(B =k)$ to denote the bundle size probability mass function.  The first proof corresponds to Theorem~\ref{ode}, which gives a differential equation for $\mathcal{K}(t,n)$ and $\mathcal{R}(t,n) = n - \mathcal{K}(t,n)$. 

\bigskip

\begin{proof} [Proof of Theorem \ref{ode}]
Define $N(t, n)$ to be the number of packages that can be delivered over the period $[0, t]$ when packages are arranged on a line, i.e., $\mathcal{K}(t, n) = E[ N(t,n)]$. 

Looking at the first $\Delta $ units of time after time $t$, we obtain
\begin{align}\label{eq1}
N(t+\Delta ,n)&=N(t+\Delta ,n)1(\text{no drivers arrive during }[t, t+\Delta])\notag\\
&\hspace{5mm}+\sum_{w=1}^m N(t+\Delta,n)1(\text{bundle of size $w$ arrives during }[t, t+ \Delta]) \notag \\
&\hspace{5mm} + N(t+\Delta, n) 1(\text{more than one driver arrives during } [t, t+\Delta]).
\end{align}
By conditioning on the location of the arrival we further get
\begin{align}
&N(t+\Delta,n)1(\text{bundle size $w$ arrives during }[t, t+\Delta]) \notag \\
&=\sum_{i=1}^n N(t+\Delta,n)1(\text{bundle of size $w$ arrives at position $i$ during }[t, t+\Delta]) \notag \\
&= 1(n\geq w) \sum_{i=1}^{n-w+1} N(t+\Delta ,n)1(\text{bundle of size $w$ arrives at position $i$ during }[t, t+\Delta]) \notag \\
&\hspace{5mm}+ 1(n\geq w) \sum_{i= n-w+2 }^{n} N(t ,n)1(\text{bundle of size $w$ arrives at position $i$ during }[t, t+\Delta]), \label{eq:RejectArrival1} \\
&\hspace{5mm}+ 1(n< w) \sum_{i= 1}^{n} N(t ,n)1(\text{bundle of size $w$ arrives at position $i$ during }[t, t+\Delta]), \label{eq:RejectArrival2}
\end{align}
where \eqref{eq:RejectArrival1} and \eqref{eq:RejectArrival2} correspond to the cases where we reject the arrival since either some packages in the requested bundle are not available or the bundle size is bigger than the number of remaining packages.  Taking expectation on both sides of \eqref{eq1}, we have
\begin{align*}
&\mathcal{K}(t+\Delta,n) \\
&= E\left[ N(t,n) 1(\text{no drivers arrive during }[t, t+\Delta]) \right] + O( \Delta^2 n^3) \\
&\hspace{5mm} + \sum_{w=1}^m 1(n \geq w) \sum_{i=1}^{n-w+1}  E\left[ N(t+\Delta ,n)1(\text{bundle of size $w$ arrives at position $i$ during }[t, t+\Delta]) \right] \\
&\hspace{5mm} + \sum_{w=1}^m 1(n \geq w) \sum_{i=n-w+2}^n E[ N(t,n) 1(\text{bundle of size $w$ arrives at position $i$ during }[t, t+\Delta]) ] \\
&\hspace{5mm} + \sum_{w=1}^m 1(n < w) \sum_{i=1}^n E[ N(t,n) 1(\text{bundle of size $w$ arrives at position $i$ during }[t, t+\Delta]) ] \\
&= \mathcal{K}(t,n) P(\text{no drivers arrive during }[t, t+\Delta]) + O( \Delta^2 n^3)  \\
&\hspace{5mm} +  \sum_{w=1}^m 1(n \geq w) \sum_{i=1}^{n-w+1}  \{ \mathcal{K}(t,i-1) + \mathcal{K}(t,n-i-w+1) + w \} \\
&\hspace{35mm} \cdot P(\text{bundle of size $w$ arrives at position $i$ during }[t, t+\Delta])  \\
&\hspace{5mm} + \sum_{w=1}^m 1(n \geq w) \sum_{i=n-w+2}^n \mathcal{K}(t,n) P(\text{bundle of size $w$ arrives at position $i$ during }[t, t+\Delta])  \\
&\hspace{5mm} + \sum_{w=1}^m 1(n <  w) \sum_{i=1}^n \mathcal{K}(t,n) P(\text{bundle of size $w$ arrives at position $i$ during }[t, t+\Delta]) \\
&= \mathcal{K}(t,n) (1 - \lambda n\Delta) + O( \Delta^2 n^3)  \\
&\hspace{5mm} + \sum_{w=1}^m 1(n \geq w) \sum_{i=1}^{n-w+1} \{ \mathcal{K}(t,i-1) + \mathcal{K}(t,n-i-w+1) + w \} f(w) \cdot \frac{1}{n} \cdot (\lambda n  \Delta)  \\
&\hspace{5mm} + \sum_{w=1}^m 1(n \geq w) \sum_{i=n-w+2}^n \mathcal{K}(t,n) f(w)\cdot \frac{1}{n}   \cdot (\lambda n  \Delta) \\
&\hspace{5mm} + \sum_{w=1}^m 1(n < w) \sum_{i=1}^n \mathcal{K}(t,n) f(w)\cdot \frac{1}{n}   \cdot (\lambda n  \Delta) ,
\end{align*}
where in the second equality we used the observation that 
\begin{align*}
&E[ N(t+\Delta, n) | \text{bundle size $w$ arrives at position $i$ during }[t, t+\Delta]] \\
&= \mathcal{K}(t,i-1) + \mathcal{K}(t,n-i-w+1) + w
\end{align*}
when bundle at location $i$ is available for pick-up. The term $O(\Delta^2 n^3)$ is relative to the limit $\Delta \to 0$ and it includes the probability of having two or more arrivals during $[t, t+\Delta]$. 

We have thus shown that
\begin{align*}
\mathcal{K}(t+\Delta,n) - \mathcal{K}(t,n) &= - \lambda n \Delta \mathcal{K}(t,n) + O( \Delta^2 n^3)  \\
&\hspace{5mm} +  \sum_{w=1}^m 1(n \geq w) \sum_{i=1}^{n-w+1} \{ \mathcal{K}(t,i-1) + \mathcal{K}(t,n-i-w+1) + w \} f(w)  \lambda   \Delta  \\
&\hspace{5mm} + \sum_{w=1}^m \sum_{i=(n-w+2) \vee 1}^n \mathcal{K}(t,n) f(w)  \lambda  \Delta \\
&=  - \lambda n \Delta \mathcal{K}(t,n) +  \sum_{w=1}^m 1(n \geq w) \left\{ 2\sum_{i=0}^{n-w} \mathcal{K}(t,i) + w(n-w+1) \right\} f(w) \lambda \Delta \\
&\hspace{5mm} + \sum_{w=1}^m (n \wedge (w-1)) \mathcal{K}(t,n) f(w)  \lambda  \Delta ,
\end{align*}
which yields the differential equation:
\begin{align*}
\frac{\partial}{\partial t} \mathcal{K}(t,n) &= - \lambda n  \mathcal{K}(t,n) + \sum_{w=1}^m 1(n \geq w) \left\{ 2\sum_{i=0}^{n-w} \mathcal{K}(t,i) + w(n-w+1) \right\} f(w) \lambda \\
&\hspace{5mm} + \sum_{w=1}^m (n \wedge (w -1)) \mathcal{K}(t,n) f(w)  \lambda \\
&= - \lambda n  \mathcal{K}(t,n) +  2\lambda \sum_{w=1}^{m \wedge n} \sum_{i=0}^{n-w} \mathcal{K}(t,i) f(w)  + \lambda \sum_{w=1}^{m\wedge n}  w(n-w+1) f(w)  \\
&\hspace{5mm} +  \sum_{w=1}^m (n \wedge (w -1)) \mathcal{K}(t,n) f(w)  \lambda  \\
&= -\lambda n \mathcal{K}(t,n) + 2\lambda \sum_{w=1}^n \sum_{i=0}^{n-w} \mathcal{K}(t,i) f(w)   \\
&\hspace{5mm} + \lambda \sum_{w=1}^{n}  w(n-w+1) f(w)  + \lambda \mathcal{K}(t,n) \sum_{w=1}^m (n \wedge (w -1))  f(w)  ,
\end{align*}
and in the third equality we used the observation that since $f(w) = 0$ for $w > 0$, we can replace the upper limit in the sum by $n$. 

To further simplify the expression, exchange the order of the sums to obtain that
$$2\lambda \sum_{w=1}^{n} \sum_{i=0}^{n-w} \mathcal{K}(t,i) f(w)  = 2\lambda \sum_{i=0}^{n-1} \mathcal{K}(t,i) \sum_{w=1}^{n-i}  f(w) = 2\lambda \sum_{i=0}^{n-1} \mathcal{K}(t,i)  F(n-i). $$
Also, 
\begin{align*}
&- n + \sum_{w=1}^m (n \wedge (w -1))  f(w) \\
&=  - n  + \sum_{w=1}^{n \wedge m} (w-1) f(w) + 1(m > n) \sum_{w= n + 1}^m n f(w) \\
&= - n + \sum_{w=1}^{n \wedge m} (w-1) (F(w) - F(w-1)) + 1(m > n) n (1 - F(n)) \\
&= - n +  \sum_{w=1}^{n \wedge m} w F(w) - \sum_{w=0}^{(n\wedge m) - 1} w F(w) - \sum_{w=1}^{n \wedge m} F(w) + 1(m > n) n (1 - F(n)) \\
&= - n + (n \wedge m) F( n \wedge m) - \sum_{w=1}^{n \wedge m} F(w) + 1(m > n) n (1 - F(n)) \\
&= \begin{cases}  - \sum_{w=1}^n F(w) , & n < m , \\
-n + m - \sum_{w=1}^m F(w), & n \geq m, \end{cases} \\
&= - \sum_{w=1}^n F(w). 
\end{align*}

We conclude that
\begin{equation*}
\frac{1}{\lambda}\frac{\partial\mathcal{K}(t,n)}{\partial t}=-\sum_{w=1}^n F(w) \mathcal{K}(t,n)+2\sum_{i=1}^{n-1}F(n-i)\mathcal{K}(t,i)+\sum_{w=1}^n w(n-w+1) f(w)
\end{equation*}
with boundary condition $\mathcal{K}(0,n)=0$.

Finally, let $\mathcal{R}(t,n) = n - \mathcal{K}(t,n)$ denote the expected number of undelivered packages at time $t$. Then, the above ODE becomes:
\begin{align*}
\frac{1}{\lambda}\frac{\partial\mathcal{R}(t,n)}{\partial t} &= -\sum_{i=1}^n F(i) \mathcal{R}(t,n)  + 2\sum_{i=1}^{n-1}F(n-i) \mathcal{R}(t,i) + n\sum_{i=1}^n F(i)  \\
&\hspace{5mm} - 2\sum_{i=1}^{n-1}F(n-i) i - \sum_{i=1}^n i(n-i+1) f(i) \\
&= -\sum_{i=1}^n F(i) \mathcal{R}(t,n)  + 2\sum_{i=1}^{n-1}F(n-i) \mathcal{R}(t,i) + n\sum_{j=1}^n f(j) (n-j+1)   \\
&\hspace{5mm} - \sum_{j=1}^{n-1} f(j)  (n-j)(n-j+1)  - \sum_{i=1}^n i(n-i+1) f(i) \\
&= -\sum_{i=1}^n F(i) \mathcal{R}(t,n)  + 2\sum_{i=1}^{n-1}F(n-i) \mathcal{R}(t,i) .   
\end{align*}
The corresponding boundary condition is $\mathcal{R}(0,n) = n$. This completes the proof. 
\end{proof}

\bigskip

After obtaining the differential equation, we use induction to prove Proposition~\ref{recursive}, which provides the explicit solution to the ODE from Theorem~\ref{ode}.

\bigskip

\begin{proof}[Proof of Proposition~\ref{recursive}]
Note that for any $n$ such that $F(n)=0$, $\mathcal{R}(t,n)=n$. Thus the boundary condition for $\gamma_{n,j}=1$ satisfies
$$\mathcal{R}(t,n) = \sum_{j=1}^n \gamma_{n,j}e^{-\lambda\sum_{k=1}^j F(k)t}=\sum_{j=1}^n \gamma_{n,j} = n.$$

It remains to prove the result for $n$ such that $F(n) > 0$. To start, let
$$\phi_n(t)=2\lambda\sum_{i=1}^{n-1}F(n-i)\mathcal{R}(t,i)$$
$$\theta_n=\lambda\sum_{j=1}^n \overline{F}(j) $$ 
then the above differential equation becomes
$$\frac{d\mathcal{R}(t,n)}{dt}+ (\lambda n-\theta_n) \mathcal{R}(t,n)=\phi_n(t)$$
with boundary condition $\mathcal{R}(0,n) = n$. We will prove by induction in $n$ that 
\begin{equation} \label{eq:InductionHyp}
\mathcal{R}(t,n)= \sum_{i=1}^n \gamma_{n,i} e^{-(\lambda i - \theta_i) t},
\end{equation}
where 
$$\gamma_{n,n}= n -\sum_{i=1}^{n-1}  \gamma_{n,i}  \quad\text{and}\quad\gamma_{n,i}= 2 \cdot \frac{\sum_{j=1}^{n-i}F(j) \gamma_{n-j,i}}{\sum_{k=i+1}^{n}F(k)}, \quad 1 \leq i < n, \qquad \gamma_{1,1} = 1.$$

Suppose now that \eqref{eq:InductionHyp} holds and consider $\mathcal{K}(t,n+1)$. Solving the differential equation satisfied by $\mathcal{R}(t,n+1)$ directly (see, e.g., \cite{schaeffer2016ordinary}), we obtain that
\begin{align}
&\mathcal{R}(t,n+1) \notag \\
&= e^{-(\lambda(n+1) - \theta_{n+1}) t} \left( n+1 + \int_0^t \phi_{n+1}(v) e^{ v (\lambda(n+1) - \theta_{n+1})} dv \right)  \notag \\
&= e^{-(\lambda (n+1) - \theta_{n+1}) t} (n+1) + \lambda e^{-(\lambda (n+1) - \theta_{n+1}) t} \int_{0}^t 2\sum_{i=1}^n F(n+1-i)\mathcal{R}(v,i) e^{(\lambda(n+1) - \theta_{n+1}) v}dv \notag \\
&= e^{-(\lambda (n+1) - \theta_{n+1}) t} (n+1) + \lambda e^{-(\lambda (n+1) - \theta_{n+1}) t} \int_{0}^t  2\sum_{i=1}^n F(n+1-i) \sum_{j=1}^i \gamma_{i,j} e^{-(\lambda j - \theta_j)  v}  e^{ (\lambda (n+1) - \theta_{n+1}) v} dv \notag \\
&=  e^{-(\lambda (n+1) - \theta_{n+1}) t} (n+1) + e^{-(\lambda (n+1) - \theta_{n+1}) t}  2\lambda \sum_{i=1}^n F(n+1-i) \sum_{j=1}^i \gamma_{i,j}  \cdot  \frac{e^{ (\lambda (n+1 -j) - \theta_{n+1} + \theta_j) t} -1 }{\lambda(n+1-j) - \theta_{n+1} + \theta_j}  \notag \\
&=  e^{-(\lambda (n+1) - \theta_{n+1}) t} (n+1) +  2 \sum_{i=1}^n F(n+1-i) \sum_{j=1}^i \gamma_{i,j}  \cdot  \frac{e^{ -(\lambda j - \theta_j) t} - e^{-(\lambda (n+1) - \theta_{n+1}) t} }{\sum_{k=j+1}^{n+1} F(k) }  , \notag
\end{align}
where in the third equality we used the induction hypothesis \eqref{eq:InductionHyp}.

It remains to analyze the last expression, for which we exchange the summation order to obtain that
\begin{align*}
\mathcal{R}(t,n+1) &= e^{-(\lambda (n+1) - \theta_{n+1}) t} (n+1) + 2 \sum_{j=1}^n  \sum_{i=j}^n F(n+1-i) \gamma_{i,j}  \cdot  \frac{e^{ -(\lambda j - \theta_j) t} - e^{-(\lambda (n+1) - \theta_{n+1}) t} }{\sum_{k=j+1}^{n+1} F(k) } \\
&= e^{-(\lambda (n+1) - \theta_{n+1}) t} (n+1) + 2 \sum_{j=1}^n \frac{e^{ -(\lambda j - \theta_j) t} - e^{-(\lambda (n+1) - \theta_{n+1}) t}}{\sum_{k=j+1}^{n+1} F(k) }  \sum_{r=1}^{n+1-j} F(r) \gamma_{n+1-r,j}  \\
&=  e^{-(\lambda (n+1) - \theta_{n+1}) t} (n+1) + \sum_{j=1}^n \left( e^{ -(\lambda j - \theta_j) t} - e^{-(\lambda (n+1) - \theta_{n+1}) t} \right) \gamma_{n+1,j} \\
&= e^{-(\lambda (n+1) - \theta_{n+1}) t} (n+1) +  \sum_{i=1}^n \gamma_{n+1,i} e^{-(\lambda i - \theta_i)t} - e^{-(\lambda(n+1) - \theta_{n+1})t} \sum_{i=1}^n \gamma_{n+1,i}.
\end{align*}
To complete the proof for $\mathcal{R}(t,n)$ note that
$$\sum_{i=1}^n \gamma_{n+1,i} = n+1 - \gamma_{n+1,n+1}$$
gives
\begin{align*}
\mathcal{R}(t,n+1) &= e^{-(\lambda (n+1) - \theta_{n+1}) t} (n+1) +  \sum_{i=1}^n \gamma_{n+1,i} e^{-(\lambda i - \theta_i)t} - e^{-(\lambda(n+1) - \theta_{n+1})t} (n+1) \\
&\hspace{5mm} + e^{-(\lambda(n+1) - \theta_{n+1})t} \gamma_{n+1,n+1} \\
&=  \sum_{i=1}^{n+1} \gamma_{n+1,i} e^{-(\lambda i - \theta_i)t} . 
\end{align*}

We now use the explicit expression for $\mathcal{R}(t,n)$ to compute $\mathcal{C}(t,n)$. Recall that $T_n^*$ is denotes the time of the first request when we start with $n$ packages arranged on a circle, and $B$ is the size of the corresponding bundle. Moreover, since $T_n^*$ is exponentially distributed with rate $\lambda n$ and $B$ has distribution $F$, we have 
\begin{align*}
\mathcal{C}(t,n)&=E[(B+\mathcal{K}(t-T_n^*,n-B))1(T_n^*<t)] \\
&= E\left[ \left( n - \mathcal{R}(t-T_n^*, n-B) \right) 1(T_n^* < t) \right] \\
&=nP(T_n^* <t)-\sum_{k=1}^m f(k)\sum_{i=1}^{n-k}\gamma_{n-k,i}e^{-\lambda\sum_{j=1}^i F(j)t}E\left[e^{\lambda \sum_{j=1}^i F(j)T_n^*} 1(T_n^*<t) \right] \\
&=n(1-e^{-\lambda nt})-\sum_{k=1}^m\sum_{i=1}^{n-k}f(k)\gamma_{n-k,i}\frac{n}{n-\sum_{j=1}^i F(j)}\left(1-e^{\lambda(\sum_{j=1}^i F(j)-n)t}\right)e^{-\lambda\sum_{j=1}^i F(j)t}\\
&=n(1-e^{-\lambda nt})- \sum_{i=1}^{n-1}\sum_{k=1}^{n-i}f(k)\gamma_{n-k,i}\frac{n}{n-\sum_{j=1}^i F(j)}\left(e^{-\lambda\sum_{j=1}^i F(j) t}-e^{-\lambda nt}\right)\\
&=n- \sum_{i=1}^{n-1} \tilde\gamma_{n,i}e^{-\lambda\sum_{j=1}^i F(j) t}-\tilde \gamma_{n,n}e^{-\lambda nt},
\end{align*}
where 
$$\tilde \gamma_{n,i}=\sum_{k=1}^{n-i}f(k)\frac{\gamma_{n-k,i}}{1-\frac{1}{n}\sum_{j=1}^i F(j)}\quad\text{ and }\quad\tilde \gamma_{n,n}=n-\sum_{i=1}^{n-1}\tilde \gamma_{n,i}.$$
\end{proof}

\bigskip

The following technical result provides a monotonicity property for $\mathcal{R}(t,n)$ that will be needed for the application of a Tauberian theorem in the proof of Theorem~\ref{alpha}.  Interestingly, $\mathcal{R}(t,n)$ is not generally monotone in $n$.

\begin{definition}
We say that a sequence $\{a_n: n \geq 1\}$ is eventually increasing (decreasing) if there exists an $n_0 \in \mathbb{N}_+$ such that $a_n$ is increasing (decreasing) for all $n \geq n_0$. 
\end{definition}

\begin{lemma}\label{MonotonicDiff}
For any $t \geq 0$, 
 $$\mathcal{K}(t,n)+ n \cdot \frac{m+1}{m-\mu+1}$$
is monotonically increasing in $n$ when $n\geq m$, where $\mu = E[B]$ and $m$ is the maximum bundle size. 
\end{lemma}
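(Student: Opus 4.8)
The plan is to recast the statement as a uniform upper bound on the discrete $n$-increment of $\mathcal{R}(t,n)$ and to read that bound off the ODE of Theorem~\ref{ode} after differencing in $n$. Set $c=\frac{m+1}{m-\mu+1}$ and $D_n(t):=\mathcal{R}(t,n+1)-\mathcal{R}(t,n)$. Since $\mathcal{R}(t,k)=k-\mathcal{K}(t,k)$, the sequence $\mathcal{K}(t,n)+cn$ is increasing on $\{m,m+1,\dots\}$ exactly when
$$\mathcal{K}(t,n+1)-\mathcal{K}(t,n)+c \;=\; 1+c-D_n(t) \;>\;0 \qquad (n\ge m),$$
i.e. when $D_n(t)<1+c$. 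As $\mu\ge 1>0$ we have $c>1$, so it suffices to prove the $m$-free inequality $D_n(t)<2$ for all $n\ge m$ and $t\ge 0$.

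To obtain it, subtract the instance of \eqref{OdeEq} at $n$ from its instance at $n+1$. For $n\ge m$ one has $\sum_{i=1}^{n}F(i)=n-\mu+1$ and $F(n+1)=1$; re-indexing the double sums and using $F(n+1-i)-F(n-i)=f(n+1-i)$ collapses them into a single-variable expression, and one is left with the scalar linear ODE
$$\frac{1}{\lambda}\frac{\partial D_n}{\partial t} \;=\; -(n-\mu+1)\,D_n \;-\;\mathcal{R}(t,n+1)\;+\;2\sum_{j=1}^{m}f(j)\,\mathcal{R}(t,n+1-j), \qquad D_n(0)=1 ,$$
in which the last two terms are to be viewed as a prescribed forcing $g_n(t)$.

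Next use the crude a priori bounds $0\le \mathcal{R}(t,k)\le k$, which are immediate from $0\le\mathcal{K}(t,k)\le k$. Dropping the nonpositive term $-\mathcal{R}(t,n+1)$ and bounding $\mathcal{R}(t,n+1-j)\le n+1-j$,
$$g_n(t)\;\le\;2\sum_{j=1}^{m}f(j)\,(n+1-j)\;=\;2\big((n+1)-\mu\big)\;=\;2\,(n-\mu+1),$$
i.e. the forcing is at most twice the decay-rate coefficient. Solving the ODE with integrating factor $e^{\lambda(n-\mu+1)t}$ and $D_n(0)=1$ then gives
$$D_n(t)\;\le\; e^{-\lambda(n-\mu+1)t}+2\big(1-e^{-\lambda(n-\mu+1)t}\big)\;=\;2-e^{-\lambda(n-\mu+1)t}\;<\;2\;\le\;1+c,$$
hence $\mathcal{K}(t,n+1)-\mathcal{K}(t,n)+c=1+c-D_n(t)\ge \frac{\mu}{m-\mu+1}+e^{-\lambda(n-\mu+1)t}>0$, which is the assertion.

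The step that looks like it should be hard --- and the reason the lemma is stated with slack --- is controlling $D_n=\mathcal{R}(t,n+1)-\mathcal{R}(t,n)$: because $\mathcal{R}(t,n)$ is genuinely non-monotone in $n$, a frontal attack would try to induct on $n$ with a two-sided bound on the increments, or to couple the line-packing processes of sizes $n$ and $n+1$, and both approaches collide with the classical non-monotonicity of R\'enyi-type packing. What rescues the proof is that differencing the ODE isolates a forcing term governed solely by the trivial bounds $0\le\mathcal{R}(t,k)\le k$, with forcing rate equal to the exponential decay rate; no induction or coupling is needed, and the constant $\frac{m+1}{m-\mu+1}$ is not tight (any constant $\ge 1$ would serve).
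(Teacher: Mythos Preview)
Your proof is correct and takes a genuinely different route from the paper's. The paper argues probabilistically: it conditions on the time $T_n$ and location $L_n$ of the first \emph{accepted} request, expresses $\mathcal{K}(t,n)$ as an average of $\mathcal{K}((t-T_n)^+,i-1)+\mathcal{Q}((t-T_n)^+,n-i+1)$ over positions $i$, and then subtracts the representations for $n+1$ and $n$ to obtain an identity for $(n-\mu+2)\mathcal{K}(t,n+1)-(n-\mu+1)\mathcal{K}(t,n)$ (their equation~\eqref{eq:MainIdentity}). Nonnegativity of this combination follows from the stochastic ordering $T_{n+1}\le_{st}T_n$ (which makes all the residual terms $D,\hat D$ nonnegative), and dividing through by $n-\mu+1$ together with $\mathcal{K}(t,n+1)\le n+1$ yields the increment bound $-\tfrac{n+1}{n-\mu+1}\ge -\tfrac{m+1}{m-\mu+1}$.

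You instead stay entirely within the analytic framework: you difference the ODE~\eqref{OdeEq} in $n$, recognize the result as a scalar linear ODE for $D_n=\mathcal{R}(t,n+1)-\mathcal{R}(t,n)$ with decay rate $n-\mu+1$ and forcing bounded by $2(n-\mu+1)$ via the crude estimate $\mathcal{R}(t,k)\le k$, and read off $D_n(t)\le 2-e^{-\lambda(n-\mu+1)t}<2\le 1+c$. This is shorter, avoids the coupling/stochastic-ordering machinery, and makes transparent your closing remark that any constant $c\ge 1$ would do. The price you pay is that the paper's identity~\eqref{eq:MainIdentity} is reused verbatim in the proof of Proposition~\ref{BoundK} (the two-sided control of $c_n(t)=\mathcal{K}(t,n)-n\alpha(t)$ needed for the rate theorem), so their longer derivation is not wasted effort; if you adopt your argument here you would still need something like~\eqref{eq:MainIdentity} later, or an analogous two-sided ODE bound on the forcing.
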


\begin{proof}
Recall that $\mathcal{K}(t,n)$ is the expected number of packages that can be picked up by time $t$ when we start with $n$ packages arranged on a line. For $n\geq m$, note that arriving requests at location $i$ occur according to a Poisson process with rate $\lambda$ when $i\leq n-m+1$ and with rate $\lambda F(n+1-i)$ when $i>n-m+1$, the latter since close to the right end-point bundle sizes need to be smaller than $n-i$ to be accepted. Furthermore, the time $T_n$ at which the first request is accepted is independent of the location where it occurs, and is exponentially distributed with rate $\lambda (n-m+1) + \lambda \sum_{i=1}^{m-1} F(i) = \lambda (n-\mu+1)$. Thus,  the probability that the first accepted request occurs at location $i$, denoted as $p_i^{(n)}$, is 
$$p_i^{(n)} = \frac{1}{n-\mu+1}, \qquad \text{for }  1 \leq i\leq n-m+1,$$
and 
$$p_i^{(n)}=\frac{F(n+1-i)}{n-\mu+1}, \qquad \text{for } n-m+1 < i \leq n.$$
Let $\mathcal{Q}(t,k)=E[B+N(t,k-B)|B\leq k]$, where $B$ is the bundle size (distributed according to $F$) and $N(t,k)$ is the number of packages picked up during $[0,t]$ when $k$ packages are arranged on a line, and note that $\mathcal{Q}(t,k)$ denotes the expected number of packages that will be picked-up during the interval $[0, t]$ given that a request has been accepted at time zero at location 1 of a total of $k$ packages. As in previous proofs, we have dropped the $\lambda$ from the notation. Let $L_n$ denote the location of the first request to be accepted when we start with $n$ packages. 
To analyze $\mathcal{K}(t,n)$, we condition on $T_n$ and $L_n$ to obtain
\begin{align*}
\mathcal{K}(t,n)&=E[E[N(t,n)|T_n]]=E\left[\sum_{i=1}^n P(L_n =i|T_n)E[N(t,n)|T_n,L_n =i]\right]\\
&= E\left[\sum_{i=1}^n p_i^{(n)}\left(E[N((t-T_n)^+,i-1) | T_n] \right. \right. \\
&\hspace{5mm} \left.  \left. +E\left[B+N((t-T_n)^+,n-i+1-B)|B\leq n-i+1, T_n \right]\right)\right]\\
&=E\left[\sum_{i=1}^{n} p^{(n)}_i \left(\mathcal{K}((t-T_n)^+,i-1)+\mathcal{Q}((t-T_n)^+,n-i+1)\right)\right]\\
&=\frac{1}{n-\mu+1}E\left[\sum_{i=1}^{n-m+1}  \left( \mathcal{K}((t-T_n) ^+, i-1)+ \mathcal{Q}((t-T_n)^+,n-i+1) \right) \right]\\
&\hspace{5mm}+\sum_{j=1}^{m-1}\frac{F(m-j)}{n-\mu+1}E\left[\mathcal{K}((t-T_n)^+,n-m+j)+\mathcal{Q}((t-T_n)^+,m-j)\right].
\end{align*}
Therefore, we get the difference
\begin{align*}
&(n-\mu+2)\mathcal{K}(t,n+1)-(n-\mu+1)\mathcal{K}(t,n)\\
&=E\left[\sum_{i=1}^{n-m+1} \mathcal{K}((t-T_{n+1})^+, i-1)+  \sum_{i=2}^{n-m+2} \mathcal{Q}((t-T_{n+1})^+,n-i+2)\right]\\
&\hspace{5mm} + E[\mathcal{K}((t-T_{n+1})^+,n-m+1)+\mathcal{Q}((t-T_{n+1})^+,n+1)] \\
&\hspace{5mm}+\sum_{j=1}^{m-1}F(m-j) E\left[\mathcal{K}((t-T_{n+1})^+,n-m+j+1)+\mathcal{Q}((t-T_{n+1})^+,m-j)\right]\\
&\hspace{5mm}-E\left[\sum_{i=1}^{n-m+1} \left( \mathcal{K}((t-T_n) ^+, i-1)+ \mathcal{Q}((t-T_n)^+,n-i+1) \right) \right]\\
&\hspace{5mm}-\sum_{j=1}^{m-1}F(m-j) E\left[\mathcal{K}((t-T_n)^+,n-m+j)+\mathcal{Q}((t-T_n)^+,m-j)\right]  \\
&= \sum_{i=1}^{n-m+1} \left( D(t,n,i-1) + \hat D(t,n,n-i+1) \right) + \sum_{k=1}^{m-1} F(k) \hat D(t,n,k)  \\
&\hspace{5mm} + E[\mathcal{K}((t-T_{n+1})^+,n-m+1)+\mathcal{Q}((t-T_{n+1})^+,n+1)]  \\
&\hspace{5mm}+ \sum_{k=1}^{m-1} F(k) E\left[ \mathcal{K}((t-T_{n+1})^+,n-k+1)\right]  - \sum_{k=1}^{m-1} F(k) E\left[ \mathcal{K}((t-T_{n})^+,n-k) \right] \\
&= \sum_{i=1}^{n-m+1} \left( D(t,n,i-1) + \hat D(t,n,n-i+1) \right) + \sum_{k=1}^{m-1} F(k) \left( \hat D(t,n,k)  + D(t,n,n-k) \right) \\
&\hspace{5mm} + E[\mathcal{K}((t-T_{n+1})^+,n-m+1)+\mathcal{Q}((t-T_{n+1})^+,n+1)]  \\
&\hspace{5mm} + \sum_{k=0}^{m-1} F(k) E\left[ \mathcal{K}((t-T_{n+1})^+,n-k+1) - \mathcal{K}((t-T_{n+1})^+, n-k) \right] ,
\end{align*}
where 
\begin{align*}
D(t,n,i) &:= E\left[ \mathcal{K}((t-T_{n+1})^+,i) - \mathcal{K}((t-T_{n})^+,i) \right] \\
\hat D(t,n,i) &:= E\left[ \mathcal{Q}((t-T_{n+1})^+,i) - \mathcal{Q}((t-T_{n})^+,i) \right].
\end{align*}
Furthermore, since $F(0) = 0$ and $F(m) = 1$ we have that
\begin{align*}
&E[\mathcal{K}((t-T_{n+1})^+,n-m+1) ] +  \sum_{k=0}^{m-1} F(k) E\left[ \mathcal{K}((t-T_{n+1})^+,n-k+1) - \mathcal{K}((t-T_{n+1})^+, n-k) \right]  \\
&= \sum_{k=0}^{m} F(k) E\left[ \mathcal{K}((t-T_{n+1})^+,n-k+1) - \mathcal{K}((t-T_{n+1})^+, n-k) \right] + E[\mathcal{K}((t-T_{n+1})^+,n-m) ] \\
&= \sum_{k=1}^m F(k) E\left[ \mathcal{K}((t-T_{n+1})^+,n-k+1) \right] - \sum_{k=1}^{m+1} F(k-1) E\left[ \mathcal{K}(t-T_{n+1})^+,n-k+1) \right] \\
&\hspace{5mm} + E[\mathcal{K}((t-T_{n+1})^+,n-m) ] \\
&= \sum_{k=1}^m f(k) E\left[ \mathcal{K}((t-T_{n+1})^+,n-k+1) \right]  .
\end{align*}

Now use the observation that for any $j \geq m$ and $t \geq 0$ we have
\begin{align*}
 \mathcal{Q}(t,j) &= E\left[ B + N(t,j- B) | B \leq j \right] \\
&= E[B] + E[ \mathcal{K}(t,j-B)] = \mu + \sum_{k=1}^m f(k) \mathcal{K}(t, j-k),
\end{align*}
to obtain that
\begin{align*}
E[\mathcal{Q}((t-T_{n+1})^+,n+1)] =  \mu + \sum_{k=1}^m f(k)  \mathcal{K}((t-T_{n+1})^+,n-k+1)] 
\end{align*}
and
$$\hat D(t,n,j) = \sum_{k=1}^m f(k) D(t,n,j-k) \qquad \text{for } j \geq m.$$
We have thus derived that for $n \geq m$,
\begin{align} \label{eq:MainIdentity}
&(n-\mu+2)\mathcal{K}(t,n+1)-(n-\mu+1)\mathcal{K}(t,n) \notag \\
&=  \sum_{i=1}^{n-m+1} \left( D(t,n,i-1) + \sum_{k=1}^m f(k)  D(t,n,n-i+1-k) \right) + \sum_{i=1}^{m-1} F(i) \left( D(t,n,n-i) + \hat D(t,n,i)  \right) \notag \\
&\hspace{5mm} + 2 \sum_{i=1}^m f(i) E\left[ \mathcal{K}((t-T_{n+1})^+,n-i+1) \right] + \mu. 
\end{align}

Finally, note that $T_{n+1}\leq_{s.t.}T_n$, where (s.t.) denotes the standard stochastic order, which since  both $\mathcal{K}$ and $\mathcal{Q}$ are non-decreasing in $t$, implies that $D(t,n,i) \geq 0$ and $\hat D(t,n,j) \geq 0$ for any $i,j \geq 1$. Hence, we immediately obtain that
$$(n-\mu+2)\mathcal{K}(t,n+1)-(n-\mu+1)\mathcal{K}(t,n) \geq 0.$$
Dividing by $n-\mu+1$ now gives
$$\mathcal{K}(t,n+1) - \mathcal{K}(t,n) \geq - \frac{1}{n-\mu+1} \cdot \mathcal{K}(t,n+1),$$
and using the observation that $\mathcal{K}(t,n+1) \leq n +1$ further gives
$$\mathcal{K}(t,n+1) - \mathcal{K}(t,n) \geq - \frac{n+1}{n-\mu+1} \geq - \frac{m+1}{m-\mu+1}$$
for all $n \geq m$. This in turn implies that 
$$\mathcal{K}(t,n)+n \cdot \frac{m+1}{m-\mu+1}$$
is monotonically increasing with $n\geq m$. 
\end{proof}

\bigskip

Next we calculate the formula for $\beta(t) = 1 - \alpha(t)$ through the generating function $G(t,x):=\sum_{n=m}^\infty \mathcal{R}(t,n)x^n$. The key tool in the analysis is the use of a Tauberian theorem that allows us to infer the behavior of $\mathcal{R}(t,n)$ from that of $G(t,x)$. We write $f(x) \sim g(x)$ as $x \to a$ to denote $\lim_{x \to a} f(x)/g(x) = 1$.

\bigskip

\begin{proof}[Proof of Theorem~\ref{alpha}]
Define $\varphi(y) = \sum_{i=1}^{m-1} \overline{F}(i) y^i/i$ and recall that $\theta_n=\lambda\sum_{j=1}^n \overline{F}(j) $. Note that when $n \geq m$ we have $\theta_n = \lambda \varphi'(1)$. From Theorem~\ref{ode} we have
\begin{align*}
\frac{\partial\mathcal{R}(t,n)}{\partial t} &= - \lambda \sum_{i=1}^n F(i) \mathcal{R}(t,n)  + 2 \lambda \sum_{i=1}^{n-1}F(n-i) \mathcal{R}(t,i) \\
&= - (\lambda n - \theta_n) \mathcal{R}(t,n) +  2 \lambda \sum_{i=1}^{n-1}F(n-i) \mathcal{R}(t,i) . 
\end{align*}
Next, multiply both sides by $x^n$ and sum over $n$ from $m$ to infinity to obtain
\begin{align*}
\sum_{n=m}^\infty \frac{\partial\mathcal{R}(t,n)}{\partial t} x^n &= - \sum_{n=m}^\infty (\lambda n - \lambda \varphi'(1)) \mathcal{R}(t,n) x^n + 2\lambda \sum_{n=m}^\infty \sum_{i=1}^{n-1} F(n-i) \mathcal{R}(t,i) x^n \\
&= -\lambda x \sum_{n=m}^\infty \mathcal{R}(t,n) \frac{d}{dx} x^n + \lambda \varphi'(1) G(t,x) + 2\lambda \sum_{i=1}^{m-1} \sum_{n=m}^\infty  F(n-i) \mathcal{R}(t,i) x^n \\
&\hspace{5mm} + 2\lambda \sum_{i=m}^\infty \sum_{n=i+1}^\infty F(n-i) \mathcal{R}(t,i) x^n \\
&= -\lambda x \frac{\partial}{\partial x} G(t,x) + \lambda \varphi'(1) G(t,x) + 2\lambda \sum_{i=1}^{m-1} \mathcal{R}(t,i) x^i \sum_{j=m-i}^\infty  F(j)  x^{j} \\
&\hspace{5mm} + 2\lambda \sum_{i=m}^\infty \mathcal{R}(t,i) x^i \sum_{j=1}^\infty F(j)  x^{j} \\
&= -\lambda x \frac{\partial}{\partial x} G(t,x) + \lambda \varphi'(1) G(t,x) + 2 \lambda \sum_{i=1}^{m-1} \mathcal{R}(t,i) x^i \left( \sum_{j=m-i}^\infty x^j -  \sum_{j=m-i}^\infty  \overline{F}(j)  x^{j} \right) \\
&\hspace{5mm} + 2 \lambda G(t,x) \left(  \sum_{j=1}^\infty x^j - x \varphi'(x) \right) \\
&=  -\lambda x \frac{\partial}{\partial x} G(t,x) + \lambda \left( \varphi'(1) + 2 x (1-x)^{-1} - 2x \varphi'(x) \right) G(t,x) \\
&\hspace{5mm} + 2 \lambda x^{m} (1-x)^{-1} \sum_{i=1}^{m-1} \mathcal{R}(t,i)  - 2 \lambda \sum_{i=1}^{m-1} \mathcal{R}(t,i) x^i  \sum_{j=m-i}^\infty \overline{F}(j) x^j,
\end{align*}
where the exchange of derivative and series in the third equality is justified by Theorem A.5.1 in \cite{durrett2010probability} and we use the convention that $\sum_{i=a}^b x_i \equiv 0$ if $b < a$. Furthermore, Theorem~A.5.1 in \cite{durrett2010probability} also gives that
$$\sum_{n=m}^{\infty} \frac{\partial \mathcal{R}(t, n)}{\partial t}x^n = \frac{\partial G(t,x)}{\partial t},$$
and we obtain that $G(t,x)$ satisfies the differential equation:
\begin{align*}
\frac{\partial G(t,x)}{\partial t}  &=  -\lambda x \frac{\partial}{\partial x} G(t,x) + \lambda \left( \varphi'(1) + 2 x (1-x)^{-1} - 2x \varphi'(x) \right) G(t,x) \\
&\hspace{5mm} + 2 \lambda x^{m} (1-x)^{-1} \sum_{i=1}^{m-1} \mathcal{R}(t,i)  - 2 \lambda \sum_{i=1}^{m-1} \mathcal{R}(t,i) x^i  \sum_{j=m-i}^\infty \overline{F}(j) x^j. 
\end{align*}

To solve it, make the change of variables $r = \ln x - \lambda t$ and $s = t$, and define $\tilde G(s,r) = G( s, e^{\lambda s + r})$ to obtain
\begin{align*}
\frac{\partial}{\partial t} G(t,x) &= \frac{\partial}{\partial r} \tilde G(s,r) (-\lambda) + \frac{\partial}{\partial s} \tilde G(s,r) , \quad \text{and} \\
\frac{\partial}{\partial x} G(t,x) &= \frac{\partial}{\partial r} \tilde G(s,r) \frac{1}{x} .
\end{align*}
Substituting in our expression for $\frac{\partial}{\partial t} G(t,x)$ we obtain
\begin{align*}
& \frac{\partial}{\partial r} \tilde G(s,r) (-\lambda) + \frac{\partial}{\partial s} \tilde G(s,r) \\
&= - \lambda \frac{\partial}{\partial r} \tilde G(s,r) +   \tilde G(s,r) \lambda \left( \varphi'(1) + \frac{2 e^{\lambda s+ r}}{1-e^{\lambda s + r}} - 2 e^{\lambda s+r} \varphi'(e^{\lambda s+r})      \right)   \\
&\hspace{5mm}  + \frac{2 \lambda e^{m(\lambda s+ r)}}{1-e^{\lambda s + r}} \sum_{i=1}^{m-1} \mathcal{R}(s,i) - 2 \lambda \sum_{i=1}^{m-1} \mathcal{R}(s,i) \sum_{j=m-i}^\infty \overline{F}(j) e^{(i+j)(\lambda s+r)}   ,
\end{align*}
which by cancelling the terms $\lambda \frac{\partial}{\partial r} \tilde G(s,r)$ on both sides can be written as 
$$\frac{d}{ds} H(s) + H(s) p(s) = q(s),$$
with $H(s) = \tilde G(s,r)$, 
$$p(s) = - \lambda \left( \varphi'(1) + \frac{2 e^{\lambda s+ r}}{1-e^{\lambda s + r}} - 2 e^{\lambda s+r} \varphi'(e^{\lambda s+r})      \right),$$
and
\begin{align*}
q(s) &= \lambda \left( \frac{2 e^{m(\lambda s+ r)}}{1-e^{\lambda s + r}} \sum_{i=1}^{m-1} \mathcal{R}(s,i) - 2\sum_{i=1}^{m-1} \mathcal{R}(s,i) \sum_{j=m-i}^{m-1} \overline{F}(j) e^{(i+j)(\lambda s+r)}  \right).
\end{align*}

The solution of this ODE (see \cite{schaeffer2016ordinary} P.8 equation (1.18)) is 
$$H(s)=e^{-\int_0^sp(h)dh}\left(C+\int_{0}^sq(v)e^{\int_{0}^vp(h)dh}dv\right),$$
where $C$ is a constant determined by the boundary conditions. Since $H(0)=\tilde G(0,r) = G(0, e^r) = \sum_{n=m}^\infty n e^{rn} $, we have
$$H(s)=e^{-\int_0^sp(h)dh}\left( H(0) + \int_{0}^sq(v)e^{\int_{0}^vp(h)dh}dv \right).$$

Substituting the expression for $p(s)$ gives
\begin{align*}
-\int_0^s p(h)dh &= \lambda \int_0^s \left( \varphi'(1) + \frac{2 e^{\lambda h+ r}}{1-e^{\lambda h + r}} - 2 e^{\lambda h+r} \varphi'(e^{\lambda h+r})      \right) dh \\
&=\lambda \varphi'(1) s+2\int_{e^r}^{e^{\lambda s+r}} \left(  \frac{1}{1-y}- \varphi'(y)  \right) dy\\
&=\lambda \varphi'(1) s + 2\left(\ln(1 - e^r) - \ln(1 - e^{r+\lambda s}) \right)-2\left(\varphi(e^{r+\lambda s})-\varphi(e^r)\right).
\end{align*}
This in turn implies that if we define $h(s,x,y) = (1-y)^2 e^{-2(\varphi(x) - \varphi(y)) + \lambda \varphi'(1) s}$, then
$$e^{-\int_0^sp(h)dh}=\frac{(1-e^{r})^2}{(1-e^{r+\lambda s})^2} e^{-2\left(\varphi(e^{r+\lambda s})-\varphi(e^r)\right)+\lambda \varphi'(1) s} = \frac{1}{(1-e^{r+\lambda s})^2 } \cdot h(s, e^{r+\lambda s}, e^r) .$$
Define also
\begin{align*}
\tilde q(x,y) &:=  2 y^m (1-y) \sum_{i=1}^{m-1} \mathcal{R}(\ln(y/x)/\lambda,i) - 2 (1-y)^2 \sum_{i=1}^{m-1} \mathcal{R}(\ln( y/x)/\lambda,i) \sum_{j=m-i}^{m-1} \overline{F}(j) y^{i+j} .
\end{align*}
and note that $q(v) = \tilde q(e^r, e^{r+\lambda v})/(1-e^{r+\lambda v})^2$.  It follows that
\begin{align*}
\tilde G(s,r) (1-e^{r+\lambda s})^2  &= H(s) (1-e^{r+\lambda s})^2  \\
&=  h(s, e^{r+\lambda s}, e^r) \left( H(0) +  \int_0^s  \lambda \tilde q(e^r, e^{r+\lambda v})  \cdot \frac{1}{h(v, e^{r+\lambda v}, e^r)} \, dv \right)   \\
&= h(s, e^{r+\lambda s}, e^r) \left( \sum_{n=m}^\infty n e^{rn} +  \int_{e^{-\lambda s}}^1  \frac{ \tilde q(e^r, e^{r+\lambda s}u) }{u h(s + (\ln u)/\lambda, e^{r+\lambda s}u, e^r)} \, du \right) .
\end{align*}
Substituting $t = s$ and $x = e^{r+\lambda s}$ we obtain:
\begin{align}
G(t,x)(1-x)^2 &= h(t, x, x e^{-\lambda t}) \left( \sum_{n=m}^\infty n (x e^{-\lambda t})^n + \int_{e^{-\lambda t}}^1 \frac{\tilde q(x e^{-\lambda t}, xu)}{u h(t+(\ln u)/\lambda, xu, x e^{-\lambda t})} du  \right) \notag \\
&=  e^{-2(\varphi(x)- \varphi(x e^{-\lambda t})) + \lambda \varphi'(1) t} (xe^{-\lambda t})^m  \left( m - (m-1) xe^{-\lambda t}  \right) \notag \\
&\hspace{5mm} +   e^{-2\varphi(x) } x^{\varphi'(1)+1}  \int_{e^{-\lambda t}}^1  e^{2\varphi(xu)}  \hat q(t + (\ln u)/\lambda, xu) \, du, \label{eq:OmegaDef}
\end{align}
%
%
where 
$$\hat q(s,v) := 2 v^{m-\varphi'(1)-1} (1-v) \sum_{i=1}^{m-1} \mathcal{R}(s, i) - 2(1-v)^2 \sum_{i=1}^{m-1} \mathcal{R}(s, i) \sum_{j=m-i}^{m-1} \overline{F}(j) v^{i+j - \varphi'(1)-1}.$$

Now take the limit as $x \nearrow 1$ to obtain:
\begin{align}
\beta(t) &:= \lim_{x \to 1} G(t,x)(1-x)^2  \label{eq:LimitG}  \\
&= e^{-2(\varphi(1)- \varphi( e^{-\lambda t})) + \lambda \varphi'(1) t} (e^{-\lambda t})^m \left( m - (m-1) e^{-\lambda t}  \right) +   e^{-2\varphi(1) }  \int_{e^{-\lambda t}}^1  e^{2\varphi(u)}  \hat q(t + (\ln u)/\lambda, u) \, du.  \notag
\end{align}

Finally, to obtain the first statement of the theorem we use a Tauberian theorem to infer the asymptotic behavior of $\mathcal{R}(t,n)$ from that of $G(t,x)$. To this end, define
$$S(t,x):=\sum_{n=m}^{\infty} \left(n - \mathcal{R}(t,n)+n \kappa\right) x^n = -G(t,x) + c \cdot \frac{(1-x) m x^m + x^{m+1}}{(1-x)^2}  ,$$
where $\kappa=(m+1)/(m-\mu+1)$, $c = \kappa + 1$ and $\mu = \varphi'(1) - 1$. From \eqref{eq:LimitG} we obtain 
$$S(t,x)\sim \left(c -\beta(t)\right)\frac{1}{(1-x)^2} \quad \text{ as } x\nearrow 1.$$
Moreover, by Lemma~\ref{MonotonicDiff} we have that $cn - \mathcal{R}(t,n) = \mathcal{K}(t,n)+n \kappa$ is monotonically increasing with $n$, so Theorem~8.3 in \cite{grandell1997mixed} yields 
$$\mathcal{K}(t,n) + n(c-1) = cn -\mathcal{R}(t,n)  \sim n \left( c - \beta(t) \right) \quad \text{ as }n\rightarrow\infty,$$
which implies that
$$ \lim_{n\rightarrow\infty} \frac{\mathcal{K}(t,n)}{n} = 1- \beta(t) = \alpha(t).$$

To obtain the asymptotic behavior of $\mathcal{C}(t,n)$ recall that $T_n^*$ denotes the time of the first request when we start with $n$ packages arranged on a circle and $B$ is the corresponding bundle size. Then, use Lemma~\ref{MonotonicDiff} to get
\begin{align}\label{eq:Cupper}
\mathcal{C}(t,n)&=E[B+\mathcal{K}(t-T_n^*,n-B)1(T_n^*<t)]\notag\\
&\leq \mu +  E[\mathcal{K}(t,n-B)]  \notag\\
&= \mu + E[\mathcal{K}(t,n-B) + \kappa (n-B)] - \kappa (n-\mu) \notag \\
&\leq \mu + \mathcal{K}(t,n) + \kappa n - \kappa(n-\mu) \qquad \text{(by Lemma~\ref{MonotonicDiff})} \notag \\
&\leq \mu + \mathcal{K}(t,n)+m\kappa.
\end{align}
Note that the expected number of packages that can be picked up during time $\Delta $, when the total number of packages is $n$, is $\lambda n \Delta  m$, so 
\begin{equation} \label{eq:TimeDiff}
\mathcal{K}(t+\Delta,n)\leq \mathcal{K}(t,n)+m\lambda n \Delta.
\end{equation}
Therefore, using Lemma \ref{MonotonicDiff} again and the observation that $E[T_n^*]=1/(\lambda n)$ (since $T_n^*$ is exponentially distributed with rate $\lambda n$), we have
\begin{align}\label{eq:Clower}
\mathcal{C}(t,n)&\geq E[\mathcal{K}((t-T_n^*)^+,n-B)]\notag\\
&= E[\mathcal{K}((t-T_n^*)^+,n-B) + (n-B)\kappa] - (n-\mu)\kappa  \notag \\
&\geq E[ \mathcal{K}((t-T_n^*)^+,n-m) + (n-m)\kappa] - (n-\mu)\kappa \qquad \text{(by Lemma~\ref{MonotonicDiff})} \notag \\
&\geq E[ \mathcal{K}(t,n-m) - m\lambda (n-m) T_n^*] - (m-\mu) \kappa  \hspace{13mm} \text{(by \eqref{eq:TimeDiff})} \notag \\
&\geq \mathcal{K}(t,n-m)-m-m\kappa.
\end{align}
Combining \eqref{eq:Cupper} and \eqref{eq:Clower} we obtain
$$\lim_{n\rightarrow\infty}\frac{\mathcal{C}(t,n)}{n}=\lim_{n\rightarrow\infty}\frac{\mathcal{K}(t,n)}{n}=\alpha(t).$$
This completes the proof. 
\end{proof}

\bigskip

It only remains to prove Theorem~\ref{convergence}, which gives the convergence rate of $\mathcal{K}(t,n)/n$ to $\alpha(t)$. The proof will be based again on the use of a Tauberian theorem, however, the monotonicity required is more difficult to verify. To ease the reading of the proof we first state a couple of preliminary technical results. 

\bigskip

\begin{proposition}\label{BoundK}
Define $c_n(t) =\mathcal{K}(t,n)-n\alpha(t)$, where $\alpha(t)$ is defined as in Theorem~\ref{alpha}. Then, for any $t \geq 0$, we have that $c_n(t)$ is either: i) bounded, ii) positive and eventually increasing, or iii) negative and eventually decreasing  in $n$.
\end{proposition}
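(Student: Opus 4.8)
The plan is to recast the exact identity~\eqref{eq:MainIdentity} into a self-contained first-order recursion for $c_n(t)=\mathcal{K}(t,n)-n\alpha(t)$ of the form
$$(n-\mu+1)\bigl(c_{n+1}(t)-c_n(t)\bigr)=c_n(t)+O(1)\qquad\text{as }n\to\infty,$$
with $\mu=E[B]$, and then read off the trichotomy. Indeed, fix $t$ and let $M=M(t)<\infty$ bound the $O(1)$ term for all $n\ge n_0$ (some $n_0\ge m$). If $c_{n_1}(t)>M$ for some $n_1\ge n_0$, then $c_{n_1+1}(t)-c_{n_1}(t)=\bigl(c_{n_1}(t)+O(1)\bigr)/(n_1-\mu+1)>0$, so $c_{n_1+1}(t)>c_{n_1}(t)>M$, and inductively $c_n(t)$ is positive and strictly increasing for all $n\ge n_1$, which is case~(ii); the mirror statement for $c_{n_1}(t)<-M$ gives case~(iii); and if neither occurs then $|c_n(t)|\le M$ for every $n\ge n_0$, which is case~(i).

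To derive the recursion I would substitute $\mathcal{K}(t,\ell)=\ell\alpha(t)+c_\ell(t)$ into~\eqref{eq:MainIdentity}. The left-hand side becomes $\alpha(t)(2n-\mu+2)+(n-\mu+2)c_{n+1}(t)-(n-\mu+1)c_n(t)$, so if $u_n(t)$ denotes the right-hand side of~\eqref{eq:MainIdentity} then $(n-\mu+2)c_{n+1}(t)-(n-\mu+1)c_n(t)=u_n(t)-\alpha(t)(2n-\mu+2)$. The heart of the argument is the estimate $u_n(t)=2\alpha(t)n+2\sum_{k=1}^m f(k)\,c_{n+1-k}(t)+O(1)$. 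For the terms $D(t,n,\cdot),\hat D(t,n,\cdot)$ appearing in~\eqref{eq:MainIdentity} I would use that $T_{n+1}$ is exponential with rate $\lambda(n-\mu+2)$ while $T_n$ has rate $\lambda(n-\mu+1)$, so $T_{n+1}\le_{st}T_n$ and $E[T_n]-E[T_{n+1}]=\Theta(n^{-2})$; combined with the Lipschitz-in-time bound $\mathcal{K}(t+\Delta,j)-\mathcal{K}(t,j)\le m\lambda j\Delta$ from~\eqref{eq:TimeDiff} and its analogue for $\mathcal{Q}$, this gives $0\le D(t,n,j),\hat D(t,n,j)\le mj/\bigl((n-\mu+1)(n-\mu+2)\bigr)$, which makes each of the sums $\sum_{i=1}^{n-m+1}D(t,n,i-1)$, $\sum_{i=1}^{n-m+1}\sum_k f(k)D(t,n,n-i+1-k)$, $\sum_{i=1}^{m-1}F(i)\hat D(t,n,i)$, and so on, bounded uniformly in $n$. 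For the last term of~\eqref{eq:MainIdentity}, the same Lipschitz bound gives $0\le\mathcal{K}(t,n-k+1)-E[\mathcal{K}((t-T_{n+1})^+,n-k+1)]\le m\lambda(n-k+1)E[T_{n+1}]=O(1)$, and replacing $\mathcal{K}(t,n-k+1)$ by $(n-k+1)\alpha(t)+c_{n+1-k}(t)$ turns $2\sum_{k=1}^m f(k)E[\mathcal{K}((t-T_{n+1})^+,n-k+1)]$ into $2\alpha(t)(n-\mu+1)+2\sum_{k=1}^m f(k)c_{n+1-k}(t)+O(1)$; adding the $+\mu$ of~\eqref{eq:MainIdentity} finishes the estimate.

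Substituting this into the displayed identity, the $2\alpha(t)n$ terms cancel, and writing $(n-\mu+2)c_{n+1}-(n-\mu+1)c_n=(n-\mu+1)(c_{n+1}-c_n)+c_{n+1}$ gives $(n-\mu+1)\bigl(c_{n+1}(t)-c_n(t)\bigr)=2\sum_{k=1}^m f(k)c_{n+1-k}(t)-c_{n+1}(t)+O(1)$. By Theorem~\ref{alpha} we have $c_\ell(t)=o(\ell)$, so the right-hand side is $o(n)$; this first forces $c_{n+1}(t)-c_n(t)=o(1)$, hence $c_{n+1-k}(t)-c_n(t)\to0$ for each fixed $k\le m$, and therefore $2\sum_k f(k)c_{n+1-k}(t)-c_{n+1}(t)=2c_n(t)-c_n(t)+o(1)=c_n(t)+o(1)$. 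This is exactly the recursion claimed in the first paragraph, so the trichotomy follows; in fact it shows the stronger statement that $c_n(t)$ is bounded for every fixed $t$, which is precisely what is needed in the proof of Theorem~\ref{convergence}. I expect the only genuinely delicate step to be the estimate $u_n(t)=2\alpha(t)n+2\sum_k f(k)c_{n+1-k}(t)+O(1)$: one must check that the accumulated error is really $O(1)$ and not, say, $O(\log n)$, for which both ingredients are essential --- the $\Theta(n^{-2})$ decay of $E[T_n]-E[T_{n+1}]$, which tames the $\Theta(n)$-many $D$-terms, and the $o(\ell)$ bound on $c_\ell$ from Theorem~\ref{alpha}. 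With any weaker error term the sign-propagation in the first paragraph, which relies on the coefficient of $c_n$ being exactly $+1$, would collapse.
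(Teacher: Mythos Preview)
Your approach is correct and essentially the same as the paper's: both start from identity~\eqref{eq:MainIdentity}, bound the $D$- and $\hat D$-terms via the Lipschitz estimate~\eqref{eq:TimeDiff} together with $E[T_n]-E[T_{n+1}]=\Theta(n^{-2})$, deduce that $c_{n+1}(t)-c_n(t)\to 0$, and then run the sign-propagation trichotomy on a recursion of the form $(n-\mu+O(1))(c_{n+1}-c_n)=c_n+O(1)$. The paper writes this as a two-sided inequality with explicit constants rather than an equality with an $O(1)$ remainder, but the content is identical.

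One overreach: your closing remark that ``in fact it shows the stronger statement that $c_n(t)$ is bounded'' is not justified by what you have written. The trichotomy by itself does not exclude, say, $c_n\to+\infty$ in case~(ii). It \emph{is} true that your recursion, combined with $c_n=o(n)$, forces boundedness --- set $d_n=c_n/(n-\mu+1)$, telescope to get $d_n=d_{n_0}+\sum_{k\ge n_0}\epsilon_k/((k-\mu+1)(k-\mu+2))$, observe the sum converges so $d_n\to L$, then $c_n\sim Ln$ with $c_n=o(n)$ forces $L=0$, and the tail bound on the sum gives $|c_n|\le M$ --- but that extra step is not in your writeup. The paper does not take this shortcut either; it uses the trichotomy only as the monotonicity hypothesis needed for the Tauberian theorem in the proof of Theorem~\ref{convergence}.
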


\begin{proof}
Define $T_n$, $D(t,n,i)$ and $\hat D(t,n,j)$ as in the proof of Lemma~\ref{MonotonicDiff}, and let $\kappa = (m+1)/(m-\mu+1)$. Recall that $D(t,n,i) \geq 0$ and $\hat D(t,n,j) \geq 0$. Next, note that 
$$E[T_{n+1}] = \frac{1}{(n-\mu+2)\lambda},$$
and use equation \eqref{eq:MainIdentity}  to obtain
\begin{align}\label{convergeeq1}
&(n-\mu+2)\mathcal{K}(t,n+1)-(n-\mu+1)\mathcal{K}(t,n)\notag\\
&\geq 2\sum_{i=1}^m f(i) E\left[ \mathcal{K}((t-T_{n+1})^+, n+1-i) \right]  \notag\\
&\geq 2\sum_{i=1}^m f(i) \left( E\left[ \mathcal{K}((t-T_{n+1})^+,n+1-m) \right] - (n+1-m) \kappa \right) + 2\sum_{i=1}^m f(i) (n+1-i) \kappa   \notag\\
&= 2 E\left[ \mathcal{K}((t-T_{n+1})^+,n+1-m) \right]  + 2 m \kappa  - 2\kappa \mu  \notag \\
&\geq 2 E \left[ \mathcal{K}(t,n+1-m) - m\lambda (n+1-m) T_{n+1} \right]   \notag  \\
&\geq 2 \mathcal{K}(t,n+1-m) - 2m, 
\end{align}
where in the second inequality we used Lemma~\ref{MonotonicDiff}, and in the third inequality we used \eqref{eq:TimeDiff}. 

To obtain an upper bound note that \eqref{eq:TimeDiff} also gives
\begin{align*}
D(t,n,i) &= E\left[ \mathcal{K}((t- T_n + T_n - T_{n+1})^+, i) - \mathcal{K}((t- T_n )^+, i) \right] \\
&\leq E\left[  m\lambda i (T_n - T_{n+1}) \right] = m\lambda i \left(  \frac{1}{\lambda (n-\mu+1)} - \frac{1}{\lambda(n-\mu+2)}  \right) \\
&= \frac{mi}{(n-\mu+1)(n-\mu+2)} ,
\end{align*}
where we have used the observation that $T_n$ is exponentially distributed with rate $\lambda (n-\mu+1)$. The same arguments also yield
$$\hat D(t,n,j) \leq \frac{mj}{(n-\mu+1)(n-\mu+2)}.$$
Substituting these estimates into \eqref{eq:MainIdentity} now gives
\begin{align}
&(n-\mu+2)\mathcal{K}(t,n+1)-(n-\mu+1)\mathcal{K}(t,n) \notag \\
&\leq  \sum_{i=1}^{n-m+1} \left( \frac{m(i-1)}{(n-\mu+1)(n-\mu+2)}  + \sum_{k=1}^m f(k)  \frac{m(n-i+1-k)}{(n-\mu+1)(n-\mu+2)}   \right) \notag \\
&\hspace{5mm} + \sum_{i=1}^{m-1} F(i) \left( \frac{m(n-i)}{(n-\mu+1)(n-\mu+2)}  + \frac{mi}{(n-\mu+1)(n-\mu+2)}  \right) \notag \\
&\hspace{5mm} + 2 \sum_{i=1}^m f(i) E\left[ \mathcal{K}((t-T_{n+1})^+,n-i+1)  \right] + \mu \notag \\
&= \frac{m}{(n-\mu+1)(n-\mu+2)} \left(   (n-m+1) (n -\mu) + n (m -\mu  )   \right) \notag \\
&\hspace{5mm} + 2 \sum_{i=1}^m f(i) E\left[ \mathcal{K}((t-T_{n+1})^+,n-i+1) + \kappa (n-i+1) \right] - 2 \sum_{i=1}^m f(i) \kappa (n-i+1) + \mu \notag \\
&\leq m + \frac{n m(m-\mu)}{(n-\mu+1)(n-\mu+2)}  + 2 E\left[ \mathcal{K}((t-T_{n+1})^+,n) + \kappa n \right] - 2\kappa (n-\mu+1) +\mu \notag \\
&\leq 2 \mathcal{K}(t,n)  + 3m(\kappa+1), \label{convergeeq2}
\end{align}
where in the second inequality we used Lemma~\ref{MonotonicDiff}.  Hence, combining \eqref{convergeeq1} and \eqref{convergeeq2} we obtain that
\begin{align} \label{eq:doubleIneq}
2\mathcal{K}(t,n+1-m) - \mathcal{K}(t,n) - 2m \leq (n-\mu+2) \left( \mathcal{K}(t,n+1)- \mathcal{K}(t,n) \right) \leq   \mathcal{K}(t,n)  + 3m(\kappa+1).
\end{align}
Dividing by $n-\mu+2$, taking the limit as $n \to \infty$, and using Theorem~\ref{alpha}, we obtain
\begin{align*}
\alpha(t) = \lim_{n \to \infty} \frac{2\mathcal{K}(t,n+1-m) - \mathcal{K}(t,n)}{n-\mu+2} &\leq \lim_{n \to \infty} \left(  \mathcal{K}(t,n+1)- \mathcal{K}(t,n) \right) \leq \lim_{n \to \infty} \frac{\mathcal{K}(t,n)}{n-\mu+2} = \alpha(t). 
\end{align*}

This in turn implies that
\begin{equation} \label{eq:LimitCs}
\lim_{n\rightarrow\infty} c_{n+1}(t) - c_n(t)=0
\end{equation}
for all $t \geq 0$. Furthermore, \eqref{eq:doubleIneq} can be written as:
$$\frac{2 c_{n+1-m}(t) - c_n(t) - \alpha(t)(2m-\mu) - 2m}{n-\mu+2}  \leq  c_{n+1}(t) - c_n(t)  \leq \frac{c_n(t) + \alpha(t) (\mu-2) + 3m(\kappa+1)}{n-\mu+2}.$$
which combined with \eqref{eq:LimitCs} gives that  there exists $n_0 \in \mathbb{N}_+$ such that 
$$c_{n-m+1}(t) \geq c_{n}(t) -m/2\text{ for all }n \geq n_0,$$
which implies that
$$\frac{ c_{n}(t) - \alpha(t)(2m-\mu) - 3m}{n-\mu+2}  \leq  c_{n+1}(t) - c_n(t)  \leq \frac{c_n(t) + \alpha(t) (\mu-2) + 3m(\kappa+1)}{n-\mu+2}$$
for all $n \geq n_0$. 

To complete the proof, note that if there exists an $n' \geq n_0$ such that $c_{n'}(t) + \alpha(t)(\mu-2) + 3m(\kappa+1) < 0$, then $c_{n'}(t) < 0$ and $c_{n'+1}(t) < c_{n'}(t)$, which in turn implies that,
$$c_{n'+1}(t) + \alpha(t)(\mu-2)+3m(\kappa+1)<c_{n'}(t)+ \alpha(t)(\mu-2)+3m(\kappa+1)<0.$$
Iterating in this way we obtain that
$$c_{n'+k}(t) < c_{n'+k-1}(t) < \cdots < c_{n'}(t) < 0$$
for all $k \geq 1$. This gives condition (iii) of the proposition. 

Suppose now that there exists $n' \geq n_0$ such that $c_{n'}(t) - \alpha(t) (2m-\mu) - 3m > 0$. Similarly as above, we obtain that $c_{n'}(t) > 0$ and $c_{n'+1}(t) > c_{n'}(t)$, which in turn implies that,
$$c_{n'+1}(t) -  \alpha(t) (2m-\mu) - 3m > c_{n'}(t) - \alpha(t) (2m-\mu) - 3m > 0.$$
Iterating as before, we obtain that
$$c_{n'+k}(t) > c_{n'+k-1}(t) > \cdots > c_{n'}(t) > 0$$
for all $k \geq 1$. This gives condition (ii) of the proposition.

Finally, neither of the two previous cases occurs we have that
$$-\alpha(t)(\mu-2)-3m(\kappa+1)\leq c_n(t)\leq \alpha(t)(2m-\mu)+3m$$
for all $n \geq n_0$, and condition (i) follows. 
\end{proof}

\bigskip

The second preliminary result provides the differentiability of the functions that determine $\alpha(t)$.

\begin{lemma} \label{L.Differentiability}
Define
\begin{align*}
\omega_t(x) &:=   e^{-2(\varphi(x)- \varphi(x e^{-\lambda t})) - \lambda t (m-\varphi'(1)) } x^m  \left( m - (m-1) xe^{-\lambda t}  \right) \\
&\hspace{5mm} +   e^{-2\varphi(x) } x^{\varphi'(1)+1}  \int_{e^{-\lambda t}}^1  e^{2\varphi(xu)}  \hat q(t + (\ln u)/\lambda, xu) \, du,
\end{align*}
where 
$$\hat q(s,v) := 2 v^{m-\varphi'(1)-1} (1-v) \sum_{i=1}^{m-1} \mathcal{R}(s, i) - 2(1-v)^2 \sum_{i=1}^{m-1} \mathcal{R}(s, i) \sum_{j=m-i}^{m-1} \overline{F}(j) v^{i+j - \varphi'(1)-1}.$$
Then, for any fixed $t \geq 0$, $\omega_t(x)$ is infinitely differentiable on $[0, \infty)$ and 
$$\sup_{0 \leq x \leq 1} |\omega_t''(x)| \leq H_F$$
for some constant $H_F < \infty$ that depends only on the distribution $F$. 
\end{lemma}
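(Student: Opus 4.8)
The plan is to rewrite $\omega_t$ as a finite sum of products of polynomials in $x$ with entire functions of $x$, plus one integral of such a product over a bounded $u$-interval; then both $C^\infty$-smoothness on $[0,\infty)$ and a $t$-uniform bound on $\omega_t''$ over $[0,1]$ follow from elementary estimates. The facts I would record first are: $\varphi(y)=\sum_{i=1}^{m-1}\overline F(i)y^i/i$ is a polynomial with nonnegative coefficients depending only on $F$, so $\varphi,\varphi',\varphi''$ are bounded on $[0,1]$ by constants depending only on $F$, $\varphi$ is nondecreasing on $[0,1]$, and $y\mapsto e^{\pm 2\varphi(y)}$ together with its first two derivatives is bounded on $[0,1]$ by such constants; moreover $m-\varphi'(1)=1+\sum_{i=1}^{m-1}F(i)\ge 1>0$, so $e^{-\lambda t(m-\varphi'(1))}\in(0,1]$ for every $t\ge 0$; and $0\le \mathcal R(s,i)=i-\mathcal K(s,i)\le i\le m-1$ for every $s\ge 0$ and $1\le i\le m-1$ (one may also use the closed form from Proposition~\ref{recursive}). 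The last two bounds are precisely what makes the final estimate uniform in $t$.

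For the first term of $\omega_t$, namely the constant $e^{-\lambda t(m-\varphi'(1))}\in(0,1]$ times $e^{-2(\varphi(x)-\varphi(xe^{-\lambda t}))}$ times the polynomial $x^m(m-(m-1)xe^{-\lambda t})$, I would note that each factor is $C^\infty$ on $[0,\infty)$, hence so is the product. For $x\in[0,1]$ the middle factor lies in $[e^{-2\varphi(1)},1]$ (since $\varphi(xe^{-\lambda t})\le\varphi(x)$) and its first two $x$-derivatives are controlled uniformly in $t$ through $\frac{d}{dx}(\varphi(x)-\varphi(xe^{-\lambda t}))=\varphi'(x)-e^{-\lambda t}\varphi'(xe^{-\lambda t})$ and the analogous second-derivative identity; the polynomial factor has $x$-coefficients $m$ and $-(m-1)e^{-\lambda t}$, both bounded by $m$. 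The product rule then bounds the second derivative of this term over $[0,1]$ by a constant depending only on $m$ and on $\varphi$, hence only on $F$, uniformly in $t\ge 0$.

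The crux is the second term. Here I would substitute $v=xu$ into $\hat q$ and absorb the prefactor $x^{\varphi'(1)+1}$: since $x^{\varphi'(1)+1}(xu)^{m-\varphi'(1)-1}=x^m u^{m-\varphi'(1)-1}$ and $x^{\varphi'(1)+1}(xu)^{i+j-\varphi'(1)-1}=x^{i+j}u^{i+j-\varphi'(1)-1}$, the fractional powers of $x$ coming from $\varphi'(1)$ cancel exactly, leaving only nonnegative integer powers of $x$ — this is exactly why $\omega_t$ extends smoothly across $x=0$ — while the surviving powers of $u$ carry exponents $m-\varphi'(1)-1=\sum_{i=1}^{m-1}F(i)\ge 0$ and $i+j-\varphi'(1)-1\ge m-\varphi'(1)-1\ge 0$ for $m-i\le j\le m-1$, so those $u$-factors are bounded by $1$ on $(0,1]$ (the degenerate case $\sum_{i=1}^{m-1}F(i)=0$, i.e.\ $P(B=m)=1$, just makes some of these exponents vanish, which is harmless). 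Hence $x^{\varphi'(1)+1}\hat q\bigl(t+(\ln u)/\lambda,\,xu\bigr)$ equals a polynomial $P(x,u;t)$ in $x$ of degree at most $2m$ whose coefficients are sums of products of the $\mathcal R\bigl(t+(\ln u)/\lambda,i\bigr)$'s ($1\le i\le m-1$, with argument in $[0,t]$ when $u\in[e^{-\lambda t},1]$), the $\overline F(j)$'s, and powers of $u$ not exceeding $1$; by the first paragraph these coefficients are bounded by a constant depending only on $F$, uniformly in $u\in(0,1]$ and $t\ge 0$. Setting $Q(x,u;t):=e^{-2\varphi(x)+2\varphi(xu)}P(x,u;t)$, the second term of $\omega_t$ is $\int_{e^{-\lambda t}}^{1}Q(x,u;t)\,du$; each $Q(\cdot,u;t)$ is $C^\infty$ on $[0,\infty)$ (a polynomial times the exponential of a polynomial), and using the bounds on $e^{\pm2\varphi}$ together with $\frac{d}{dx}(\varphi(xu)-\varphi(x))=u\varphi'(xu)-\varphi'(x)$ one gets $\sup\{|Q|+|\partial_xQ|+|\partial_x^2Q|:\ x\in[0,1],\ u\in(0,1],\ t\ge 0\}\le C_F$. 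For fixed $t$ the maps $u\mapsto\partial_x^kQ(x,u;t)$ are bounded and measurable on the compact interval $[e^{-\lambda t},1]$ and continuous in $x$, so Leibniz's rule (iterated) gives that $\omega_t$ is $C^\infty$ on $[0,\infty)$ with $\omega_t''(x)=(\text{first term})''+\int_{e^{-\lambda t}}^1\partial_x^2Q(x,u;t)\,du$; the integral has range of length $1-e^{-\lambda t}\le 1$, so it is bounded by $C_F$, and adding the first-term bound yields $\sup_{0\le x\le1}|\omega_t''(x)|\le H_F$ with $H_F$ depending only on $F$.

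I expect the main obstacle to be the bookkeeping in the cancellation step of the third paragraph: one must check carefully that multiplying by $x^{\varphi'(1)+1}$ turns every term of $\hat q(s,xu)$ into an integer power of $x$ times a nonnegative power of $u$ — so that both smoothness at $x=0$ and boundedness of the $u$-integrand near $u=0$ (the latter relevant for the $t=\infty$ analogue) hold — while correctly tracking the degenerate case $P(B=m)=1$ in which several of these exponents vanish simultaneously. Everything else is routine estimation of polynomials and of $e^{\pm2\varphi}$ on $[0,1]$, with the $t$-uniformity resting only on $m-\varphi'(1)>0$ and on $0\le\mathcal R(s,i)\le m-1$.
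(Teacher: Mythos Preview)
Your proposal is correct and follows essentially the same approach as the paper. Both arguments hinge on the same key cancellation---multiplying $\hat q(s,xu)$ by $x^{\varphi'(1)+1}$ converts every term into an integer power of $x$ times a nonnegative power of $u$---and then bound the second derivative via the uniform estimates $0\le\mathcal R(s,i)\le m-1$, $e^{-\lambda t(m-\varphi'(1))}\le 1$, and boundedness of $e^{\pm 2\varphi}$ and its derivatives on $[0,1]$; the paper merely makes this more explicit by naming the resulting polynomial $p(s,v)=\sum_k c_k(s)v^k$ and writing out the second-derivative kernel $\nu_k(x,u)$, whereas you package the same estimates through the product rule and Leibniz differentiation under the integral.
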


\begin{proof}
Start by defining 
\begin{align}
p(s,v) &:= \hat q(s,v) v^{\varphi'(1)+1-m} = 2(1-v) \sum_{i=1}^{m-1} \mathcal{R}(s,i) - 2(1-v)^2 \sum_{i=1}^{m-1} \mathcal{R}(s, i) \sum_{k=0}^{i-1} \overline{F}(k-i+m) v^{k} \notag \\
&=  2(1-v) \sum_{i=1}^{m-1} \mathcal{R}(s,i) - 2(1-v)^2 \sum_{k=0}^{m-2} v^k \sum_{i=k+1}^{m-1} \mathcal{R}(s, i) \overline{F}(k-i+m) =: \sum_{k=0}^m c_k(s) v^k. \label{eq:pDef}
\end{align}
Note that $p(s,v)$ is a polynomial of order $m$ in $v$, and $x^{\varphi'(1)+1} \hat q(s,xu) = x^m u^{\varphi'(1)+1-m} p(s,xu)$, from where it follows that $x^{\varphi'(1)+1} \hat q(s,xu)$ is infinitely differentiable in $x$ on $[0, \infty)$. Since $\varphi(y) = \sum_{i=1}^{m-1} \overline{F}(i) y^i/i$ is also infinitely differentiable on the real line, then $\omega_t(x)$ is infinitely differentiable in $x$ on $(0,\infty)$. Moreover, by writing
\begin{align*}
\omega_t(x) &= a(t) e^{2(\varphi(x e^{-\lambda t}) - \varphi(x ))} x^m - b(t) e^{2(\varphi(x e^{-\lambda t}) - \varphi(x ))} x^{m+1} \\
&\hspace{5mm} + \int_{e^{-\lambda t}}^1 e^{2(\varphi(xu) - \varphi(x))} \sum_{k=0}^{m} c_k(t+(\ln u)/\lambda)   u^{m-\varphi'(1)-1+k} x^{m+k} du,
\end{align*}
with $a(t) := m e^{-\lambda t(m-\varphi'(1))}$ and $b(t) := (m-1) e^{-\lambda t(m-\varphi'(1)+1)}$, we obtain that
\begin{align*}
\omega_t''(x) &= a(t) \frac{\partial^2}{\partial x^2}  e^{2(\varphi(x e^{-\lambda t}) - \varphi(x ))} x^m - b(t) \frac{\partial^2}{\partial x^2}  e^{2(\varphi(x e^{-\lambda t}) - \varphi(x ))} x^{m+1} \\
&\hspace{5mm} + \sum_{k=0}^m \int_{e^{-\lambda t}}^1  c_k(t+(\ln u)/\lambda) u^{m-\varphi'(1)-1+k} \frac{\partial^2}{\partial x^2}   e^{2(\varphi(x u) - \varphi(x ))} x^{m+k} du \\
&= a(t) e^{2(\varphi(x e^{-\lambda t}) - \varphi(x ))} \nu_m(x,e^{-\lambda t}) - b(t) e^{2(\varphi(x e^{-\lambda t}) - \varphi(x ))} \nu_{m+1}(x,e^{-\lambda t}) \\
&\hspace{5mm} + \sum_{k=0}^m \int_{e^{-\lambda t}}^1 c_k(t+(\ln u)/\lambda) u^{m-\varphi'(1)-1+k} e^{2(\varphi(xu) - \varphi(x))}\nu_{m+k}(x,u) \, du,
\end{align*}
where 
\begin{align*}
\nu_k(x,u) &= 4(\varphi'(xu) u - \varphi'(x))^2 x^k + 4(\varphi'(xu) u - \varphi'(x)) k x^{k-1} 1(k \geq 1) \\
&\hspace{5mm} + 2(\varphi''(xu) u^2 - \varphi''(x)) x^k + k(k-1) x^{k-2} 1(k \geq 2).
\end{align*}

Noting that $\varphi(y), \varphi'(y)$, and $\varphi''(y)$ are all increasing in $y$, gives that $e^{2(\varphi(xu) - \varphi(x))} \leq 1$ and 
\begin{align*}
|\nu_k(x,u)| &\leq 4 \varphi'(x)^2 x^k + 4 \varphi'(x) k x^{k-1} 1(k\geq 1) + 2\varphi''(x) x^k + k(k-1) x^{k-2} 1(k \geq 2) \\
&\leq 4\varphi'(1)^2 + 4\varphi'(1)k + 2\varphi''(1) + k(k-1) =: H_k,
\end{align*}
for all $x,u \in [0,1]$, which in turn implies that for any $t \geq 0$, 
\begin{align*}
|\omega_t''(x)| &\leq a(t) | \nu_m(x,e^{-\lambda t}) | + b(t) | \nu_{m+1}(x, e^{-\lambda t})| + \sum_{k=0}^m \int_{e^{-\lambda t}}^1 |c_k(t+(\ln u)/\lambda) \nu_{m+i}(x,u)| \, du \\
&\leq a(t) H_m  + b(t) H_{m+1}  + \sum_{k=0}^m H_{m+k} \int_{e^{-\lambda t}}^1 |c_k(t+(\ln u)/\lambda)| \, du .
\end{align*}
To complete the proof note that $a(t) \leq m$ and $b(t) \leq m$ for all $t \geq 0$, and from \eqref{eq:pDef} it can be verified that:
$$|c_k(s)| \leq 2 \sum_{i= (k-1)\vee 1}^{m-1} \mathcal{R}(s,i) \leq 2 \sum_{i=(k-1)\vee 1}^{m-1} i, \qquad 0 \leq k \leq m,$$
from where we obtain that
$$|\omega_t''(x)| \leq mH_m + m H_{m+1} + 2 \sum_{k=0}^m H_{m+k} \sum_{i=(k-1)\vee 1}^{m-1} i =: H_F$$
for all $t \geq 0$ and all $x \in [0,1]$. 
\end{proof}

\bigskip

We can now give the proof of Theorem~\ref{convergence}. 

\bigskip

\begin{proof}[Proof of Theorem \ref{convergence}]
Fix $t$ and let $\alpha(t)=\alpha(t,\lambda)$. Define $J(t,x):=\sum_{n=m}^\infty (\mathcal{K}(t,n) - n\alpha(t)) x^n$. Recall from the proof of Theorem~\ref{alpha} that $G(t,x) = \sum_{n=m}^\infty \mathcal{R}(t,n) x^n$, and note that from \eqref{eq:OmegaDef} we have that
\begin{align*}
G(t,x) (1-x)^2 &= \omega_t(x),
\end{align*}
where $\omega_t(x)$ is defined in Lemma~\ref{L.Differentiability}. Moreover, by Lemma~\ref{L.Differentiability} we have that $\omega_t(x)$ is infinitely differentiable on $(0, \infty)$ and satisfies $\sup_{0 \leq x \leq 1} |\omega_t''(x)| \leq H_F$, for some constant $H_F < \infty$, independent of $t$. Also, by \eqref{eq:LimitG}, we have $\omega_t(1) = \beta(t)$. Next, write $J(t,x)$ as:
\begin{align*}
J(t,x) &= - G(t,x) + \beta(t) \sum_{n=m}^\infty n x^n = \frac{\beta(t) - (1-x)^2 G(t,x) }{(1-x)^2} + \beta(t) \left(\sum_{n=m}^\infty n x^n - (1-x)^{-2}\right)\\
& = \frac{\beta(t) - \omega_t(x)}{(1-x)^2}+\beta(t)\frac{(1-m)x^{m+1}+mx^m-1}{(1-x)^2}.
\end{align*}
Since 
$$\lim_{x\rightarrow 1}\frac{(1-m)x^{m+1}+mx^m-1}{1-x}=-1,$$
and 
$$\omega_t(x)=\beta(t)+\omega_t'(1)(x-1)+O\left( (x-1)^2 \right) \text{ as }x\nearrow 1,$$
we have 
\begin{align*}
\lim_{x\nearrow 1}(1-x)J(t,x)&=\lim_{x\nearrow 1}\frac{\beta(t)-\omega_t(x)}{1-x}+\beta(t)\frac{(1-m)x^{m+1}+mx^m-1}{1-x}\\
&=\omega_t'(1)-\beta(t).
\end{align*}

Therefore, we have
\begin{align*}
\sum_{n=0}^\infty \left(\mathcal{K}(t,n)-n\alpha(t)\right)x^n\sim \frac{\omega_t'(1)-\beta(t)}{1-x} \text{ as } x\uparrow 1.
\end{align*}
From Proposition~\ref{BoundK}, $\mathcal{K}(t,n)-n\alpha(t)$ is either bounded, or eventually negative and decreasing, or eventually positive and increasing. If $\mathcal{K}(t,n)-n\alpha(t)$ is bounded, then 
$$\left|\frac{\mathcal{K}(t,n)}{n}-\alpha(t)\right|=O(n^{-1}) \text{ as } n\rightarrow\infty,$$
where we finishes the proof.
If $\mathcal{K}(t,n)-n\alpha(t)$ is eventually negative and decreasing, or eventually positive and increasing, then from Tauberian Theorem (Theorem 8.3 in~\cite{mcfadden1965mixed}), we have
$$\mathcal{K}(t,n)-n\alpha(t)\sim\frac{\omega_t'(1)-\beta(t)}{\Gamma(1)}.$$
That is, 
$$\left|\frac{\mathcal{K}(t,n)}{n}-\alpha(t)\right|=O(n^{-1}) \text{ as } n\rightarrow\infty.$$

To complete the proof, use inequalities \eqref{eq:Cupper} and \eqref{eq:Clower} to obtain that
\begin{align*}
\mathcal{K}(t,n-m)-m-m\kappa \leq \mathcal{C}(t,n)\leq \mu + \mathcal{K}(t,n)+m\kappa,
\end{align*}
where $\kappa = (m+1)/(m-\mu+1)$.
and conclude that
$$\left|\frac{\mathcal{C}(t,n)}{n}-\alpha(t)\right|=O(n^{-1}) \text{ as } n\rightarrow\infty.$$
\end{proof}

\bigskip

The last proof in the paper is that of Lemma~\ref{L.CompareStrategies}, which compares the expected cost of of using a strategy which combines private drivers with in-house vans with the strategy of using only vans. 

\bigskip

\begin{proof}[Proof of Lemma~\ref{L.CompareStrategies}]
To start, use Theorem~\ref{T.Daganzo} to obtain that for any $z$ and $\lambda := \lambda(z)$ we have
\begin{align*}
&\text{Cost}_{P}(z; {\bf x}^{(n)}) - \text{Cost}_V({\bf x}^{(n)}) \\
&=   \mathcal{C}(T,n,\lambda) \left(\zeta_P+\frac{h_P+z}{v_P}\right) \left(\frac{ \overline{r}(n) }{E[B]} + \frac{1}{n} L(\text{TSP}({\bf x}^{(n)}))\right) \\
&\hspace{5mm} +\mathcal{C}(T,n,\lambda)  \left((h_P+z)\tau_P - h_V\tau_V  \right) \\
&\hspace{5mm} + \left(\zeta_V+\frac{h_V}{v_V}\right) \left\{  \mathbb{E}_n\left[ L(\text{CVRP}({\bf Y}^{(n)}(\lambda^*) )) \right]   -  L(\text{CVRP}({\bf x}^{(n)}) )) \right\}  \\
&\leq \mathcal{C}(T,n,\lambda) \left( \left(\zeta_P+\frac{h_P+z}{v_P}\right) \frac{r^*}{E[B]}   + (h_P + z)\tau_P - h_V\tau_V \right) \\
&\hspace{5mm} +  \mathcal{C}(T,n,\lambda) \left(\zeta_P+\frac{h_P+z}{v_P}\right) \left( \frac{\overline{r}(n) - r^*}{E[B]} +  \frac{1}{n} L(\text{TSP}({\bf x}^{(n)}))\right) \\
&\hspace{5mm} + \left(\zeta_V+\frac{h_V}{v_V}\right)  \left\{ \mathbb{E}_n\left[ 2\left( \frac{n-\tilde N(T,n,\lambda)}{V} + 1 \right) \mathscr{R}(n-\tilde N(T,n,\lambda)) + L(\text{TSP}({\bf Y}^{(n)}(\lambda))) \right]  \right. \\
&\hspace{5mm} \left.  - \max\left\{   \frac{2n \overline{r}(n)}{V}, \, L(\text{TSP}({\bf x}^{(n)})) \right\}   \right\} ,
\end{align*}
where $\mathscr{R}(n-\tilde N(T,n,\lambda)) = (n- \tilde N(T,n,\lambda))^{-1} \sum_{i=1}^{n - \tilde N(T,n,\lambda)} d({\bf Y}_i , \boldsymbol{0}) $. Next, divide by $n$, take the limit as $n \to \infty$, and use Theorem~\ref{alpha} to obtain that
\begin{align*}
&\limsup_{n \to \infty} \frac{1}{n} \left( \text{Cost}_{P}(z; {\bf x}^{(n)}) - \text{Cost}_V({\bf x}^{(n)}) \right) \\
&\leq \alpha(T,\lambda) \left( \left(\zeta_P+\frac{h_P+z}{v_P}\right) \frac{r^*}{E[B]}   + (h_P + z)\tau_P - h_V\tau_V \right) + \alpha(T,\lambda) \limsup_{n \to \infty} \frac{1}{n} L(\text{TSP}({\bf x}^{(n)}) ) \\
&\hspace{5mm} + \left(\zeta_V+\frac{h_V}{v_V}\right)  \left\{ \limsup_{n \to \infty} \mathbb{E}_n\left[ 2\left( \frac{n-\tilde N(T,n,\lambda)}{V n} + \frac{1}{n} \right) \mathscr{R}(n-N(T,n,\lambda)) \right]  \right. \\
&\hspace{5mm} \left.  + \limsup_{n \to \infty}  \mathbb{E}_n\left[  \frac{L(\text{TSP}({\bf Y}^{(n)}(\lambda)))}{n}   \right] - \max\left\{   \frac{2 r^*}{V}, \, \liminf_{n \to \infty} \frac{1}{n} L(\text{TSP}({\bf x}^{(n)})) \right\}   \right\} .
\end{align*}
Now use Theorem~\ref{T.TSPbound} to obtain that 
\begin{align*}
0 \leq \limsup_{n \to \infty}  \mathbb{E}_n\left[  \frac{L(\text{TSP}({\bf Y}^{(n)}(\lambda)))}{n}   \right]  &\leq \limsup_{n \to \infty}  \frac{L(\text{TSP}({\bf x}^{(n)}))}{n} = 0 \qquad \text{a.s.}
\end{align*}
Moreover, since packages are arranged on a circle at the beginning of the day, they are all equally likely to be undelivered by time $T$, and their specific location is independent of $\tilde N(T,n,\lambda)$ (note that the $\{{\bf Y}_i\}$ may not be independent though). Hence, $\mathbb{E}_n[ d({\bf Y}_i, {\bf 0}) | \tilde N(T,n,\lambda) ] = \overline{r}(n)$, which combined with  Theorem~\ref{convergence} gives
\begin{align*}
& \limsup_{n \to \infty} \mathbb{E}_n\left[ 2\left( \frac{n-\tilde N(T,n,\lambda)}{V n} + \frac{1}{n} \right) \mathscr{R}(n-\tilde N(T,n,\lambda)) \right]  \\
&= \frac{2}{V}  \limsup_{n \to \infty} \frac{1}{n} \mathbb{E}_n\left[  \sum_{i=1}^{n- \tilde N(T,n,\lambda)} d({\bf Y}_i, {\bf 0}) \right]  + 2 \limsup_{n \to \infty} \frac{1}{n} \mathbb{E}_n\left[ \mathscr{R}(n-\tilde N(T,n,\lambda)) \right]  \\
&\leq  \frac{2}{V} \lim_{n \to \infty} \frac{n-\mathcal{C}(T,n,\lambda)}{n} \cdot \overline{r}(n) +  \lim_{n \to \infty} \frac{\rho}{n}  \\
&= \frac{2 (1-\alpha(T,\lambda)) r^*}{V} ,
\end{align*}
where $\rho := \sup_{{\bf x} \in R} d({\bf x}, {\bf 0})$ is the radius of the bounded region $R \subseteq \mathbb{R}^2$. 

We conclude that
\begin{align*}
&\limsup_{n \to \infty} \frac{1}{n} \left( \text{Cost}_{P}(z; {\bf x}^{(n)}) - \text{Cost}_V({\bf x}^{(n)}) \right)  \\
&\leq \alpha(T,\lambda)  \left( \left(\zeta_P+\frac{h_P+z}{v_P}\right) \frac{r^*}{E[B]}   + (h_P + z )\tau_P- h_V \tau_V \right) + \left( \zeta_V + \frac{h_V}{v_V} \right) \left\{  \frac{2(1-\alpha(T,\lambda)) r^*}{V} - \frac{2r^*}{V} \right\} \\
&= - \alpha(T,\lambda) \left(  \left(\zeta_V+\frac{h_V}{v_V}\right) \frac{2r^*}{V} -  \left(\zeta_P+\frac{h_P}{v_P}\right) \frac{r^*}{E[B]}  - (h_P\tau_P - h_V\tau_V) - z \left( \frac{r^*}{v_P E[B]} + \tau_P   \right)  \right) .
\end{align*}
\end{proof}

\section{Numerical experiments} \label{S.Numerical}

This section provides numerical examples illustrating the computational effort of implementing our proposed strategy, the calibration of the parameters, as well as a cost-benefit analysis. Specifically, we focus on the following:

\begin{itemize} 
\item Choosing the parameters: in particular, how to estimate the costs needed for the formulation of the optimization problem in Section~\ref{S.Pricing}, for which we cite empirical studies in the transportation literature.
\item Numerical case studies: including a full implementation of our proposed strategy under two different choices for the distribution of package destinations. 
\item Cost-benefit analysis: a comparison between the proposed strategy that uses a combination of private drivers with an in-house van delivery system, and a more traditional van-only strategy. 
\item Justification of the mathematical assumptions: in particular, of how distances between locations along an optimal TSP tour are typically small, which allows us to assume that all bundles of the same size are equally desirable. 
\end{itemize}

\subsection{Parameters} 

The parameter estimation mostly follows the sources used in \cite{qi2017shared}.

{\em Parameters related to vans:} The parameters that need to be estimated are:  $\zeta_V$, the per-mile transportation cost; $h_V$, the opportunity cost for van drivers; $v_V$, the average speed of a delivery van; $V$, the van capacity; and $\tau_V$, the end-point delivery time for vans. To compute $\zeta_V$ we use the average diesel price in the U.S. for 2017 (taken from the web), and the estimated maintenance and depreciation costs used in \cite{lammert2009twelve} and \cite{barnes2003per}, respectively. In particular, the average diesel price in the U.S. for 2017 was \$2.540 per gallon and the fuel efficiency of a UPS van is 10.6 miles per gallon of diesel, which gives a fuel cost for vans of \$0.25 per mile. The maintenance cost for vans was computed in \cite{lammert2009twelve} to be \$0.152 per mile in 2009, and the depreciation cost of a van under city driving conditions was computed in \cite{barnes2003per} to be \$0.081 per mile in 2003. Adjusting for an annual inflation of 2.5\%, we compute a van's per mile cost to be:
$$\zeta_V = 0.25+0.152\times (1.025)^8+0.081\times (1.025)^{14}=0.550.$$
 To estimate $h_V$, we use that a van driver's wage was approximately \$30 per hour~\cite{barnes2003per} in 2003, so the inflation-adjusted wage in 2017 is 
 $$h_V = 30 \times (1.025)^{14}=42.389.$$
The average speed of a UPS van was estimated in \cite{lammert2009twelve} to be $v_V = 24.1$ miles/hour. We use an average van capacity of $V = 200$ packages as in \cite{qi2017shared}, which corresponds to a van capacity of 2,000 kg and an average package weight of 10 kg. Finally, for the end-point delivery time we use $\tau_V = 97$ seconds as estimated in \cite{qi2017shared}.

\bigskip
{\em Parameters related to private cars:} The parameters related to private cars that need to be estimated are: $\zeta_P$, the per-mile transportation cost; $v_P$, the average speed; and $h_P$, the opportunity cost for private drivers (Uber/Lyft drivers). For the end-point delivery time for drivers we simply use $\tau_P = \tau_V$.   To estimate $\zeta_P$ we use the estimated hourly expenses for part-time Uber drivers computed in Table~6 of \cite{hall2015analysis}, which after taking the average over vehicle types gives \$3.84 per hour. Also, we use an average speed for cars in U.S. cities of $v_P = 29.9$ miles per hour (http://infinitemonkeycorps.net/projects/cityspeed/), which yields a per mile cost of
$$\zeta_P = \frac{3.84}{29.9} = 0.1284.$$
To estimate $h_P$ note that the average gross income per hour of an Uber driver is given by $h_P + \zeta_P v_P$, which in \cite{hall2015analysis} was estimated to be \$19.35 per hour in 2015. After adjusting for inflation we obtain that $h_P + \zeta_P v_P = 19.35 \times (1.025)^2 = 20.33$, and therefore, the per hour opportunity cost is
$$h_P = 20.33 - 0.1284 \times 29.9 = 16.49.$$

\begin{remark}
Using the parameters for the vans and private drivers discussed above, we have $\zeta_P+ h_P/v_P = 0.68$, $h_P=16.49$, $\zeta_V+\frac{h_V}{v_V}=2.31$, $h_V=42.389$, and $\tau_V = \tau_P = 0.0269$ ($= 97$s). Note that all of these were estimated for an average U.S. city, and are therefore fairly robust to where the strategy is deployed. However, the values of average distance to the depot, $r^*$, and the mean bundle size, $E[B]$, are strongly related to the specific region, and will significantly impact the amount of possible improvement  with the use of the new strategy. In terms of $r^*$ and $E[B]$, the sufficient condition  in Lemma~\ref{L.CompareStrategies} ensuring that our mixed strategy is better than the van-only strategy becomes:
$$0.023r^*-0.635\frac{r^*}{E[B]}+0.698>0.$$
\end{remark}

\bigskip
{\em Arrival rate and bundle size distribution:} Since we were unable to find any references quantifying the relationship between incentives and the supply of private drivers, as well as for the preferences on the number of packages that private drivers may want to deliver, our choices of $\lambda(z)$ and $F$ are somewhat arbitrary. For the first one we assume a linear relationship of the form $\lambda(z) = b + a z$, and estimate $a$ and $b$ using data for Uber and Lyft drivers that has been collected for transporting people (not packages). Let $s(w) = b' + a' w$ denote the supply of Uber drivers in the San Francisco area (about 50 square miles) when the average revenue per unit of time is $w$.  To fit the values of $(a', b')$ we found at http://www.govtech.com/question-of-the-day/Question-of-the-Day-for-061917.html that ride-sharing companies make about 170,000 trips on an average weekday in the San Francisco area (about 50 square miles) and Uber's market share was about 3 times that of Lyft in 2017 (https://www.recode.net/2017/8/31/16227670/uber-lyft-market-share-deleteuber-decline-users), so if we assume that the active operation time is 18 hours, then Uber's average number of trips per hour in the San Francisco area is 7083. Using as a base revenue of $w_0 = 20.33$ dollars per hour, we obtain that $s(w_0) = 7083$. According to the work done by Hall et al.~\cite{hall2015effects}, the supply of Uber drivers doubles when the surge pricing increases the wage per hour from $w_0$ to $1.8 w_0$, which gives $s(1.8 w_0) = 14166$, and we obtain $a' = 435.50$ and $b' = -1770.75$.  Since the wage $w$ and the incentive rate $z$ are linearly related ($w = z+h_P + \zeta_P v_P$), it follows from the linearity of $\lambda(z)$ that the total number of Uber drivers willing to deliver packages instead of transporting passengers is a linear function of $s(w)$. Furthermore, we assume that at the base revenue per hour $w_0$, only 10\% of the Uber drivers would be interested in delivering packages\footnote{At this time, we have no way to estimate this 10\% since we are unaware of any data for this type of business.}, which gives $n\lambda(0)  = 4000 \lambda(0) = 0.1 s(w_0)$, while at the surge pricing rate $1.8 w_0$ we expect almost all Uber drivers to prefer the delivery of packages, which yields $n\lambda(1.8 w_0 - h_P - \zeta_P v_P) = n \lambda(16.26 ) =  s(1.8 w_0 )$. To estimate $n$, we used the fact that there are around 356,916 households in San Francisco (https://datausa.io/profile/geo/san-francisco-ca/\#housing), and the average online shopping frequency in 2017 was around 21 times per year (https://www.statista.com/statistics/448659/online-shopping-frequency-usa/), so we use $n= 356916 \times 21/365 = 20535$. Hence, we obtain 
$$\lambda(z) = 0.03 +  0.04 z.$$

For the distribution of bundles, we assume that $F$ is has the following distribution: 
$$P(B = k) = \left( \sum_{i=1}^{20} e^{-10} (10)^i/i!  \right)^{-1} \frac{e^{-10} (10)^k}{k!}, \qquad k = 1, 2,  \dots, 20,$$
which corresponds to a Poisson distribution with mean 10 packages conditioned on taking values on $\{1, 2, \dots, 20\}$.

Table \ref{T.parameters} lists the parameters we use in both of the numerical experiments in the following section. We target on the region with size 5 miles $\times$ 5 miles with total number of packages $n=2000$. 

\begin{table}[H]  
\centering 
\caption{Parameters}  \label{T.parameters}
 \begin{tabular}{lll}  
     \hline
    {\em  Parameter} & {\em Value} \\
     \hline
       Total number of packages $n$ & 2000\\
       Bundle size distribution $F$ & Poisson(10), conditioned in \{1, 2, \dots, 20\} \\
       Requests arrival rate $\lambda$ & $\lambda=0.03 + 0.04z$\\
       Delivery time window $[0, T]$ & 8h\\
       Area of the square region $A$ & 25 square miles\\
       \hline
   \end{tabular}
\end{table}

\begin{figure}[t]
\begin{center}
  \includegraphics[width=8cm]{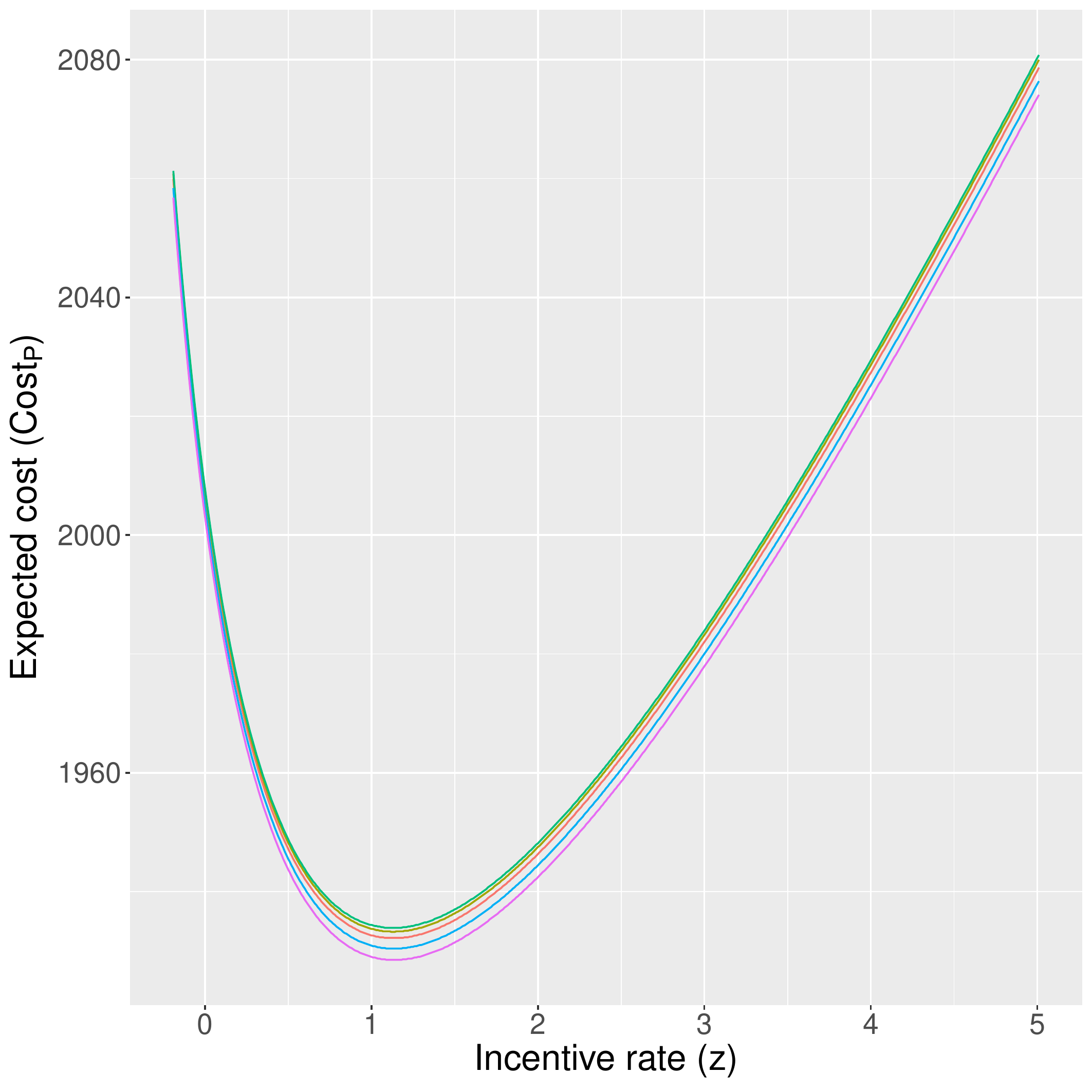}
  \caption{Objective function \eqref{eq:Optimization} for 2000 packages uniformly distributed over a square region.  Different colors represent different realizations of package destinations.}
\label{F.obj_uniform}
  \end{center}
\end{figure}

\subsection{Case studies}

To illustrate the impact that the distribution of the package destinations over the service region may have, we consider two different cases, one where the destinations are uniformly distributed throughout a square region, and another one where we can identify three different clusters in the same square region. For both cases we use a total of 2000 packages, and we sample independently their destinations using stochastic simulation. We then solve \eqref{eq:Optimization} and compute the optimal incentive rate $z^*$. To solve the TSP we used Concorde implemented in R, and for the CVRP we used VRPH 1.0.0.  Throughout all our computations we use the $L_1$ distance.

\subsubsection{Case 1: Packages distributed uniformly over a square region}

Note that the objective function \eqref{eq:Optimization} depends on the specific package destinations \linebreak ${\bf x}^{(n)} = \{{\bf x}_1, \dots, {\bf x}_n\}$ only through the length of the TSP route, L(TSP), and the average long-haul distance $\overline{r}(n)$. For $n = 2000$, the differences from one realization to another are very small, as shown in Figure~\ref{F.obj_uniform}.

\begin{figure}[t]
\centering
\begin{subfigure}[h]{.48\textwidth}
\centering
\includegraphics[width=\linewidth,height=6cm]{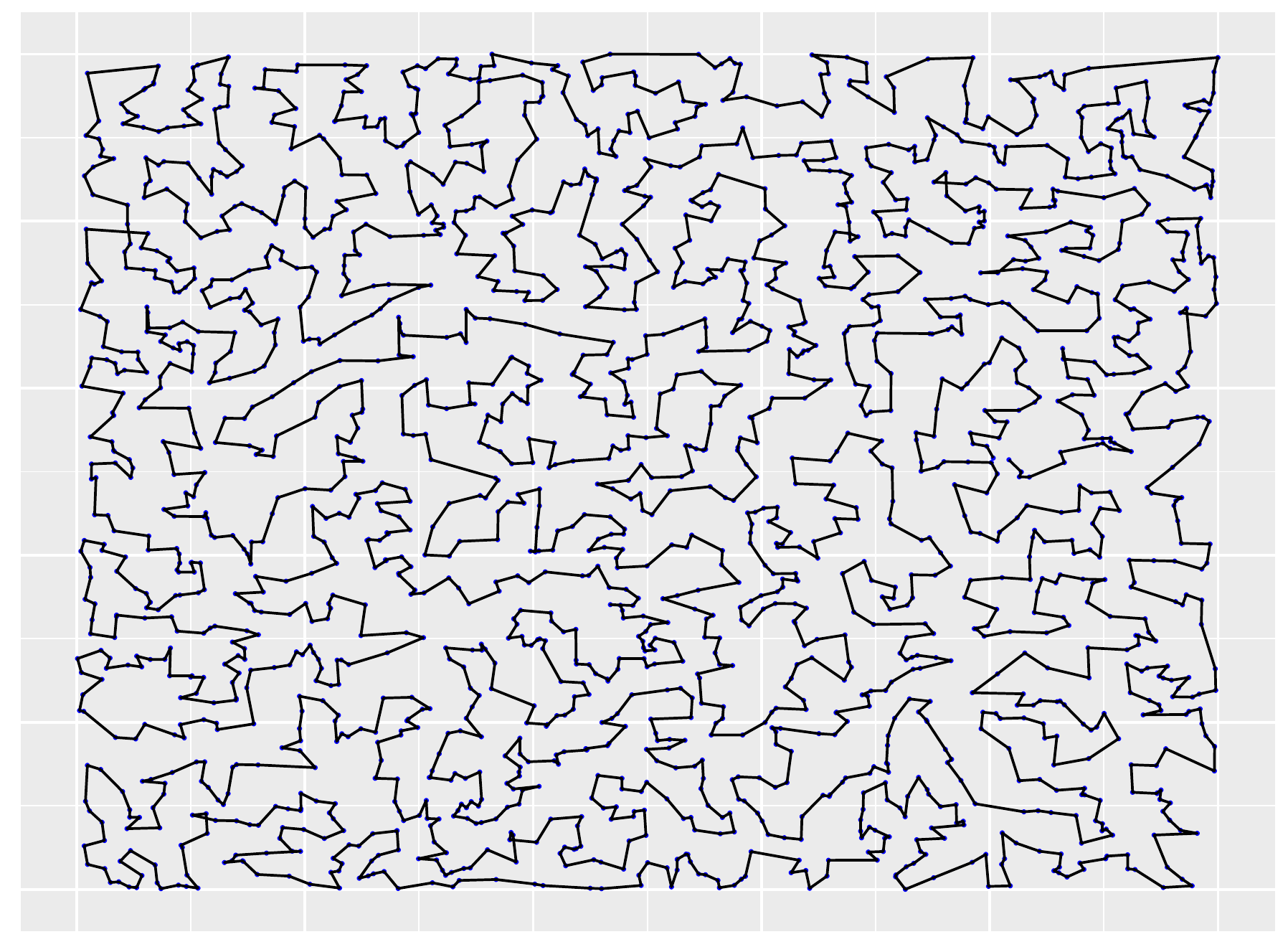}
\caption{Optimal TSP route solved using Concorde in less than 3 seconds. Length = 207.81 miles. This computation is needed for the mixed strategy.}
 \label{F.tsp_uniform}
\end{subfigure}
\begin{subfigure}[h]{.48\textwidth}
\centering
\includegraphics[width=\linewidth,height=6cm]{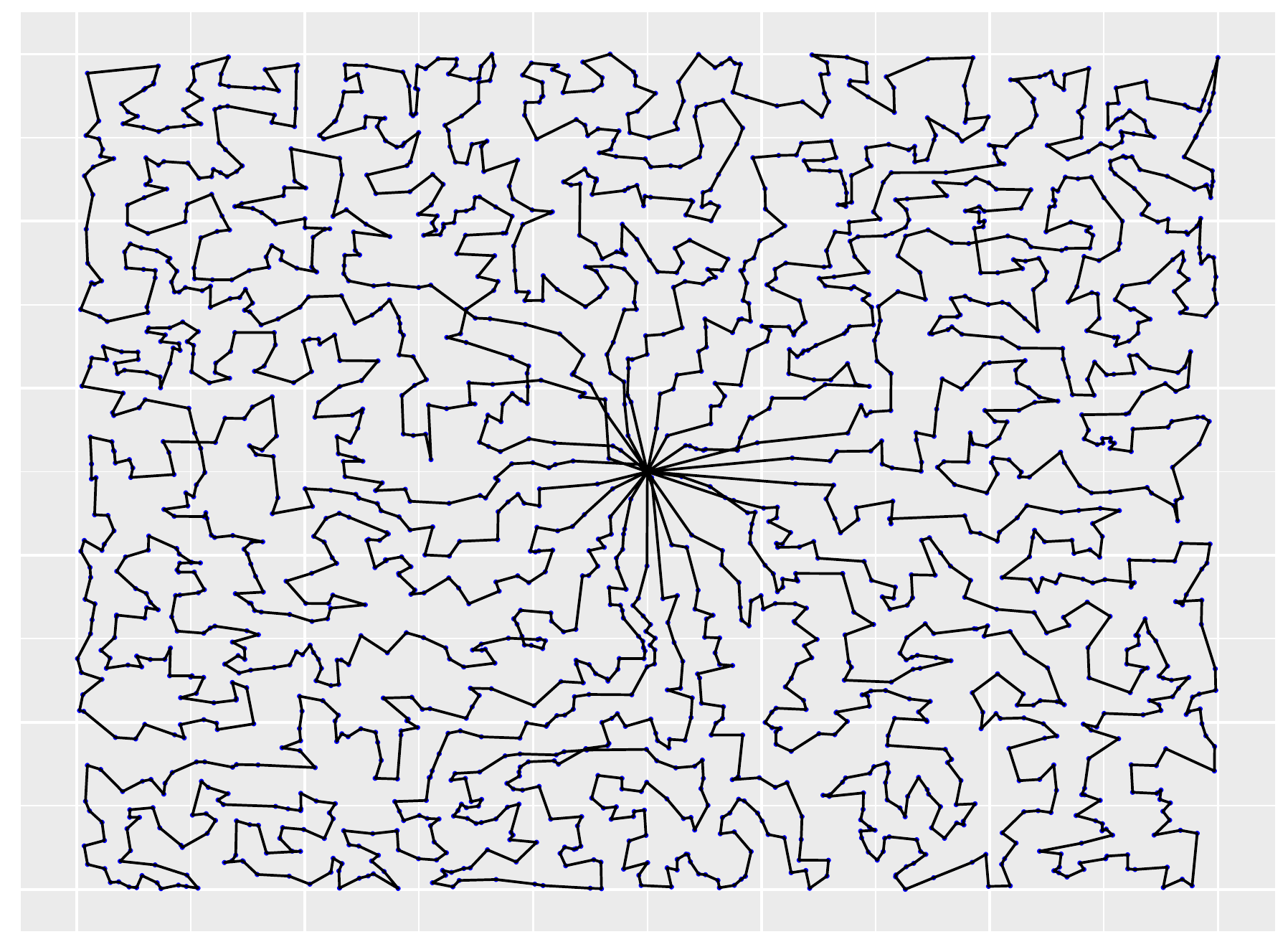}
\caption{Optimal CVRP route solved using VRPH 1.0.0 in less than 60 seconds. Length = 222.83 miles. This computation is needed for the van-only strategy.}\label{F.vrp_uniform}
\end{subfigure}
\caption{Optimal TSP and CVRP routes for 2000 packages uniformly distributed on a $5\times5$ square miles region.}
\end{figure}

Next, we used stochastic simulation to generate a single realization of the package destinations ${\bf x}^{(n)}$ along with its corresponding package pick-up process. More precisely, we simulated the $n = 2000$ independent Poisson processes with rate $\lambda(z^*)$, where $z^* = 1.11$ solves \eqref{eq:Optimization}, during the time window $[0, T]$, $T = 8$ hours. We then computed an optimal CVRP route on the leftover packages. To compare the new mixed strategy with the van-only strategy we also computed the length of an optimal CVRP route on the original 2000 packages.  Figure~\ref{F.tsp_uniform} shows an optimal TSP route of length L(TSP)$= 207.81$ miles, solved using Concorde, while Figure~\ref{F.vrp_uniform} shows an optimal CVRP route of length L(CVRP)$=222.83$ miles, solved using VRPH 1.0.0. Both routes are for the original 2000 packages, with the latter needed for the van-only strategy. 

Using the simulated CVRP route from Figure~\ref{F.vrp_uniform} we computed the cost of the van-only strategy to be:
$$\text{L(CVRP(${\bf x}^{(n)}$))} \left( \zeta_V + \frac{h_V}{v_V}  \right)  + nh_V \tau_V =  222.83 (2.309) +2000 (42.389) (97/3600) = \$ 2798.81.$$

For our proposed mixed strategy, we computed an optimal CVRP route for the leftover packages at time $T$, which in the simulation run we did had 554 packages and a length of 96.07 miles (see Figure~\ref{F.left_uniform}). We then computed the total cost for delivering the 2000 packages, which was computed to be \$1888.26. The improvement compared to the van-only strategy was 32.53\%.

\begin{figure}[h]
\begin{center}
 \includegraphics[width=8cm,height = 6cm]{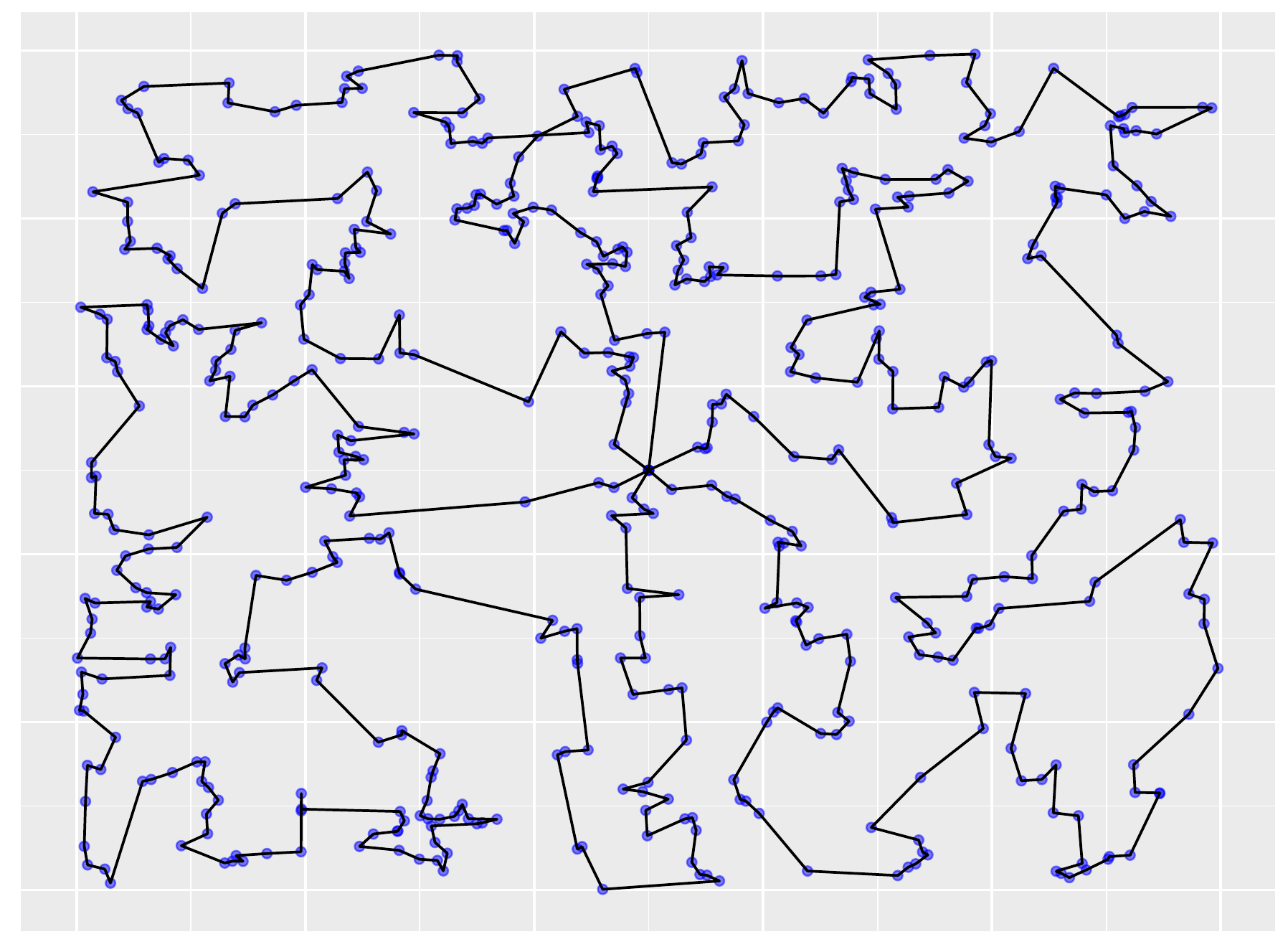}
  \caption{Optimal CVRP route for 554 leftover packages solved using VRPH 1.0.0. Length = 96.07 miles.}
\label{F.left_uniform}
  \end{center}
\end{figure}

To show how our methodology scales with the number of packages, we repeated the process described above for different values of the number of packages $n$. As Table~\ref{T.improvements} shows, in all cases the improvement of the mixed strategy compared to the van-only strategy was between 31\% and 33\%, indicating the consistency and scalability of our proposed strategy.

\begin{table}[h]
\centering 
   \caption{Improvements with different number of packages}  \label{T.improvements}
 \begin{tabular}{ccc}  
 \hline
     \emph{Number of packages} & \emph{Optimal incentive rate} & \emph{Improvement} \\
     \hline
       600  & 1.22 &31.93\%\\
       1000 & 1.17&32.01\%\\ 
       1500 & 1.15&32.15\%\\
       2000 & 1.13&32.40\%\\
       3000 & 1.11&32.70\%\\
       \hline
   \end{tabular}
\end{table}

\subsubsection{Case 2: Packages distributed in three clusters over a square region}

For this numerical experiment we consider a more realistic scenario where packages are clustered within the service region. To simulate the package destinations we generated 2000 data points according to the following method:

\begin{itemize} 
\item 500 packages uniformly distributed in a $5\times5$ square;
\item 700 packages uniformly distributed in the ellipse $\frac{(x-1.5)^2}{1.2^2}+\frac{(y-4)^2}{1^2}=1$;
\item 500 packages uniformly distributed in the ellipse $\frac{(x-3.8)^2}{0.8^2}+\frac{(y-3.3)^2}{1.2^2}=1$;
\item 300 packages uniformly distributed in the ellipse $\frac{(x-2.5)^2}{1.2^2}+\frac{(y-1.4)^2}{1^2}=1$.
\end{itemize}

Figure~\ref{F.obj_cluster} plots the objective function \eqref{eq:Optimization} for different realizations of the package destinations ${\bf x}^{(n)}$. Note that since the data is clustered around three centers, the typical distance between points is smaller, which yields a smaller expected cost for the same number of packages as in Case 1. 

\begin{figure}[t]
\begin{center}
  \includegraphics[width=8cm]{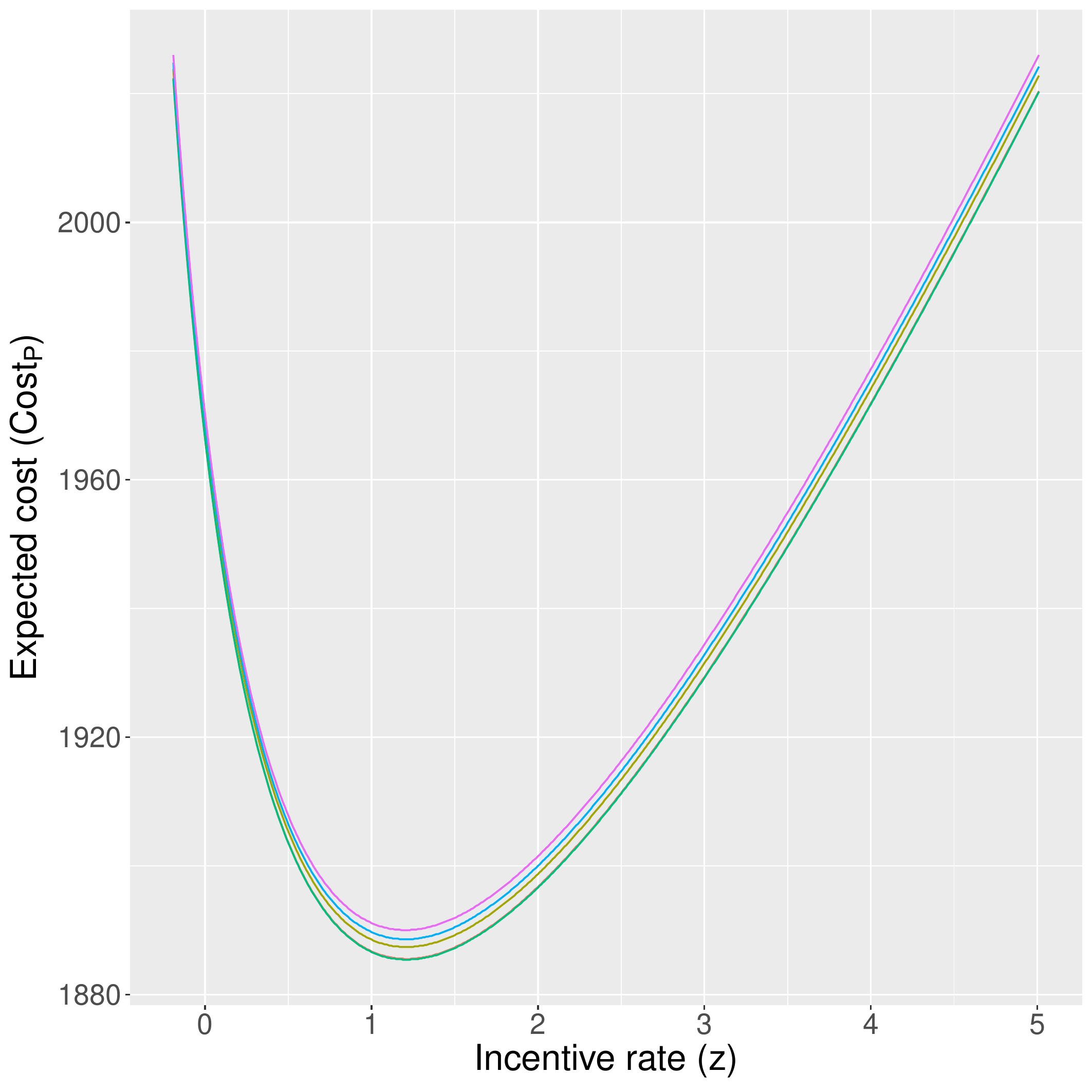}
  \caption{Objective function \eqref{eq:Optimization} for 2000 packages distributed in three clusters. Different colors represent different realizations of package destinations.}
\label{F.obj_cluster}
  \end{center}
\end{figure}

Following the same methodology as in Case 1, we computed optimal TSP and CVRP routes on the same single realization of package destinations, as depicted by Figures~\ref{F.tsp_cluster} and \ref{F.vrp_cluster}, respectively. The total lengths for the routes were L(TSP) = 176.61 miles and L(CVRP) = 193.23 miles, respectively. The total cost for the van-only strategy was computed to be
$$\text{L(CVRP(${\bf x}^{(n)}$))} \left( \zeta_V + \frac{h_V}{v_V}  \right)  + nh_V \tau_V = 193.23 (2.309) + 2000 (42.389) (97/3600) = \$ 2730.46.$$
We also computed the optimal solution $z^* = 1.21$ to \eqref{eq:Optimization} and the corresponding rate $\lambda(z^*)$, and used it to simulate the package pick up process. A single simulation run yielded a total of 531 leftover packages, an optimal CVRP route of length 81.36 miles, and a total cost of \$1829.31 (see Figure~\ref{F.cluster_left}). Therefore, the improvement of the mixed strategy compared to the van-only strategy was 33.00\%.

\begin{figure}[t]
\centering
\begin{subfigure}[h]{.48\textwidth}
\centering
\includegraphics[width=\linewidth,height=6cm]{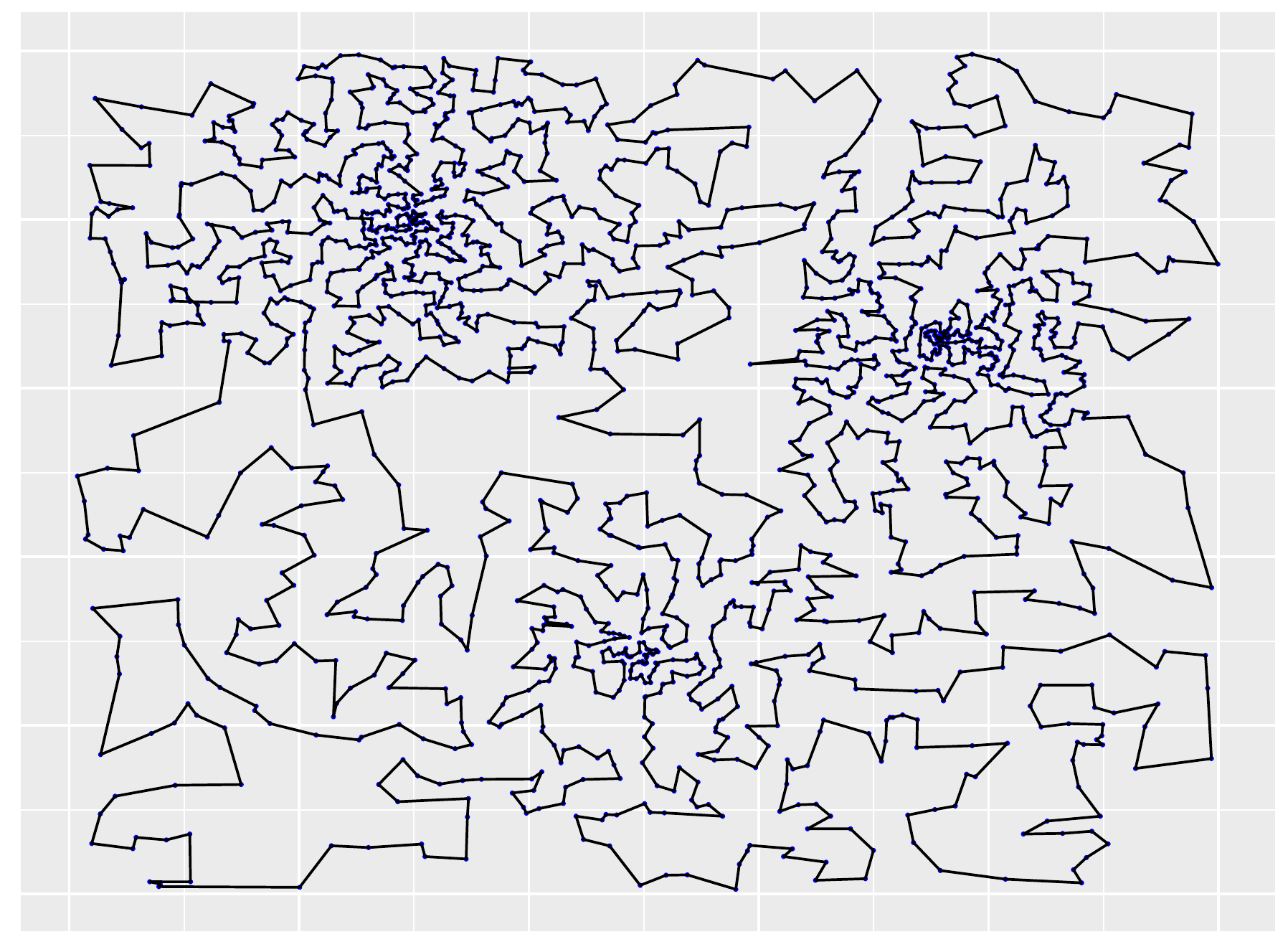}
\caption{Optimal TSP route solved using Concorde in less than 3 seconds. Length = 176.61 miles. This computation is needed for the mixed strategy.}
 \label{F.tsp_cluster}
\end{subfigure}
\begin{subfigure}[h]{.48\textwidth}
\centering
\includegraphics[width=\linewidth,height=6cm]{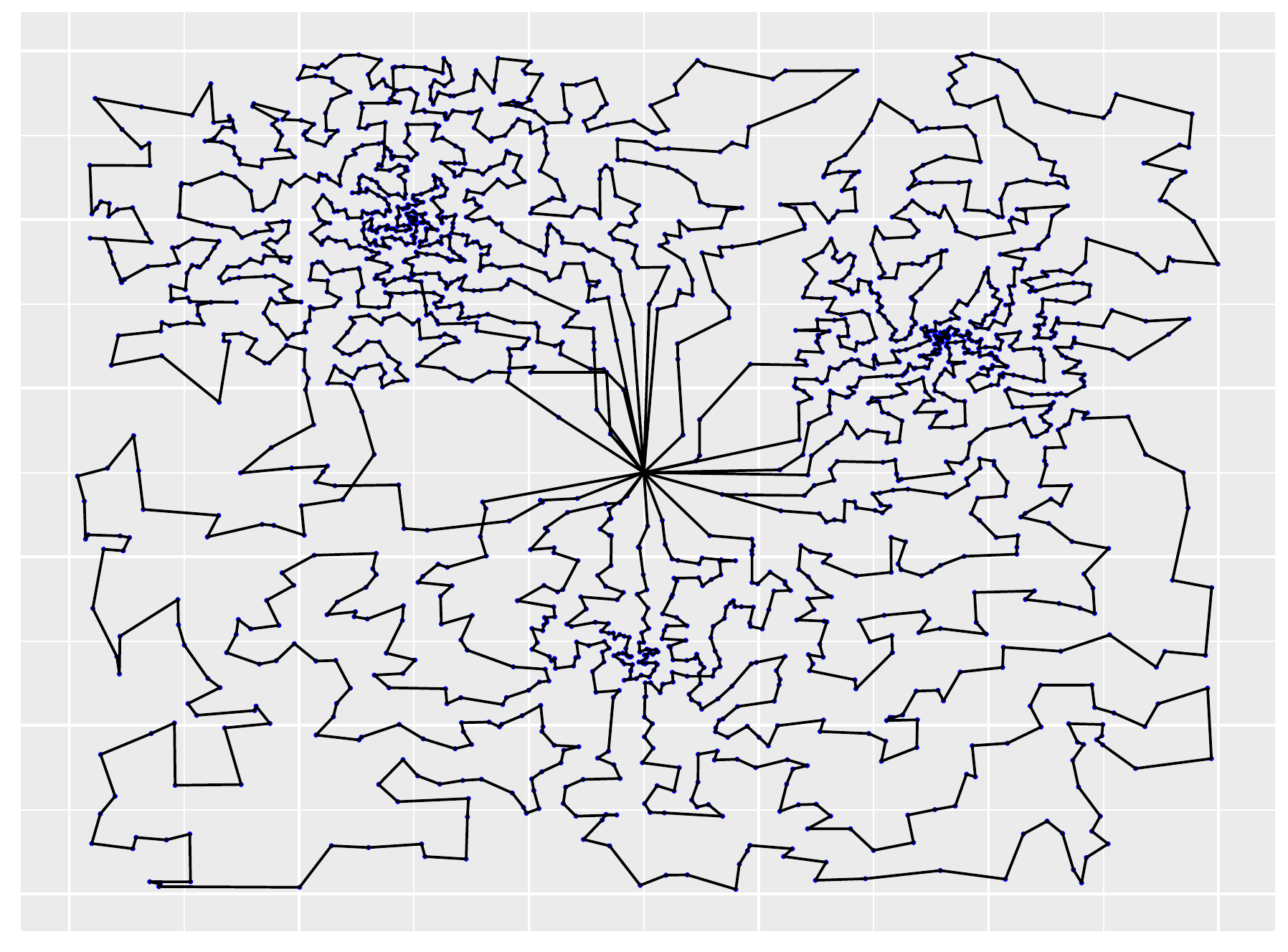}
\caption{Optimal CVRP route solved using VRPH1.0.0 in less than 60 seconds. Length = 193.23 miles. This computation is needed for the van-only strategy.}\label{F.vrp_cluster}
\end{subfigure}
\caption{Optimal TSP and CVRP routes for 2000 packages distributed in three clusters over a $5\times 5$ square miles region.}
\end{figure}

\begin{figure}[h ]
\begin{center}
  \includegraphics[width=8cm,height = 6cm]{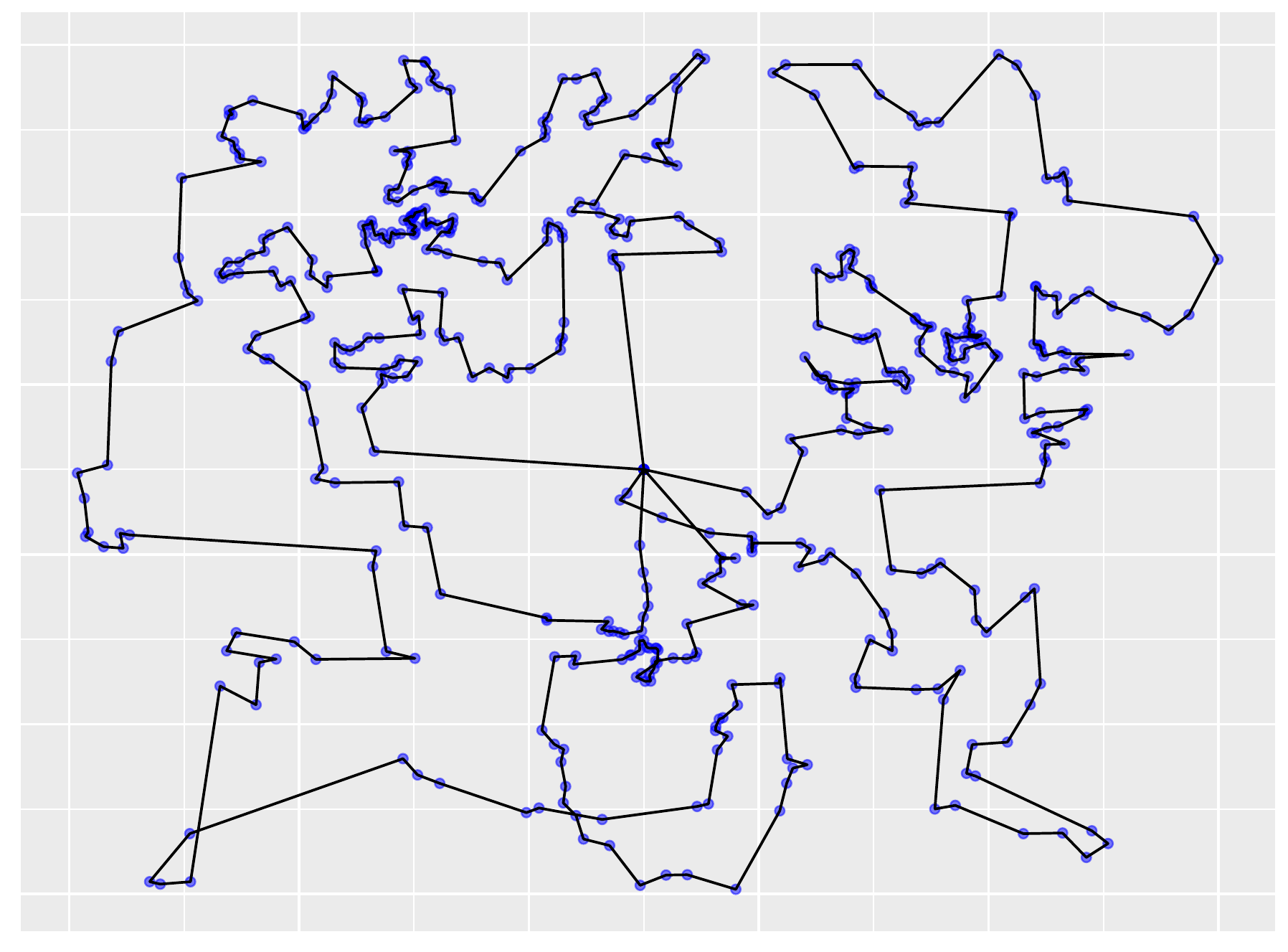}
  \caption{Optimal CVRP route for 531 leftover packages solved using VRPH 1.0.0. Length = 81.36 miles.}
  \label{F.cluster_left}
  \end{center}
\end{figure}

\subsection{Neighboring distance in the TSP route} \label{SS.TSPneighbor}

To verify the assumption that neighboring packages along an optimal TSP route are not too far away from each other, we did some numerical experiments and computed the empirical distribution of the distance between adjacent packages. We generated a total of 1000 points (package destinations) over a $5\times5$ square region, first using a uniform distribution as in Case 1 and then using three clusters as in Case 2. To compute an optimal TSP route we used Concorde. For each realization we computed the empirical density function of the the distance between adjacent packages, i.e., if we let $\boldsymbol{\xi}_i \in [0,5] \times [0,5]$ denote the location of the $i$th package along the optimal TSP route over the points $\{{\bf x}_1, \dots, {\bf x}_n\}$ (choosing randomly the first one), we computed
\begin{equation} \label{eq:DensityFunction}
f_n(x) = \frac{1}{n} \sum_{i=1}^n 1( d(\boldsymbol{\xi}_{i-1}, \boldsymbol{\xi}_i) \in dx),
\end{equation}
where $d$ is the $L_1$ distance.  Then, we ran 50 independent realizations of the 1000 points and plotted the average
$$g_n(x) = \frac{1}{50} \sum_{j=1}^{50} f_n^{(j)}(x),$$
where $f_n^{(j)}(x)$ is computed as in \eqref{eq:DensityFunction} on the $j$th run. Note that $g_n$ is an estimator for the density function of the distance between two neighbors, uniformly chosen at random, along the optimal TSP route.

The results are depicted in Figures~\ref{tsp_density_uniform} and \ref{tsp_density_clusters}.

\begin{figure}[h]
\centering
\begin{subfigure}[h]{.49\textwidth}
\centering
\begin{picture}(200,150)
\put(0,15){\includegraphics[scale=0.65, bb = 25 24 340 270, clip]{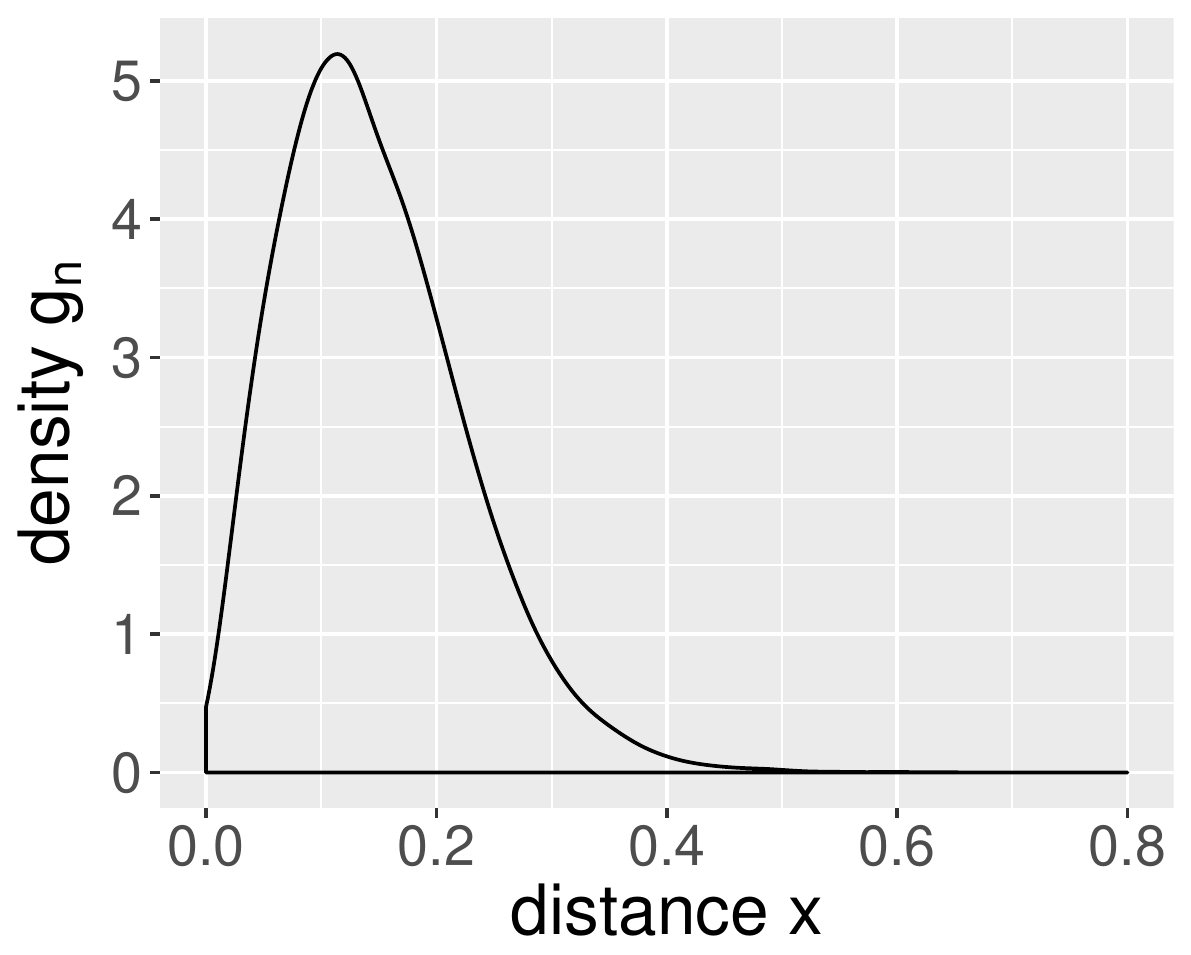}}
\put(80,3){Distance $x$}
\put(-12,60){\rotatebox{90}{Density $g_n(x)$}}
\end{picture}
\caption{Density function of neighboring distance $g_n$ along an optimal TSP route when packages are uniformly distributed.}
 \label{tsp_density_uniform}
\end{subfigure}
\begin{subfigure}[h]{.49\textwidth}
\centering
\begin{picture}(200,150)
\put(0,15){\includegraphics[scale=0.65, bb = 25 24 340 270, clip]{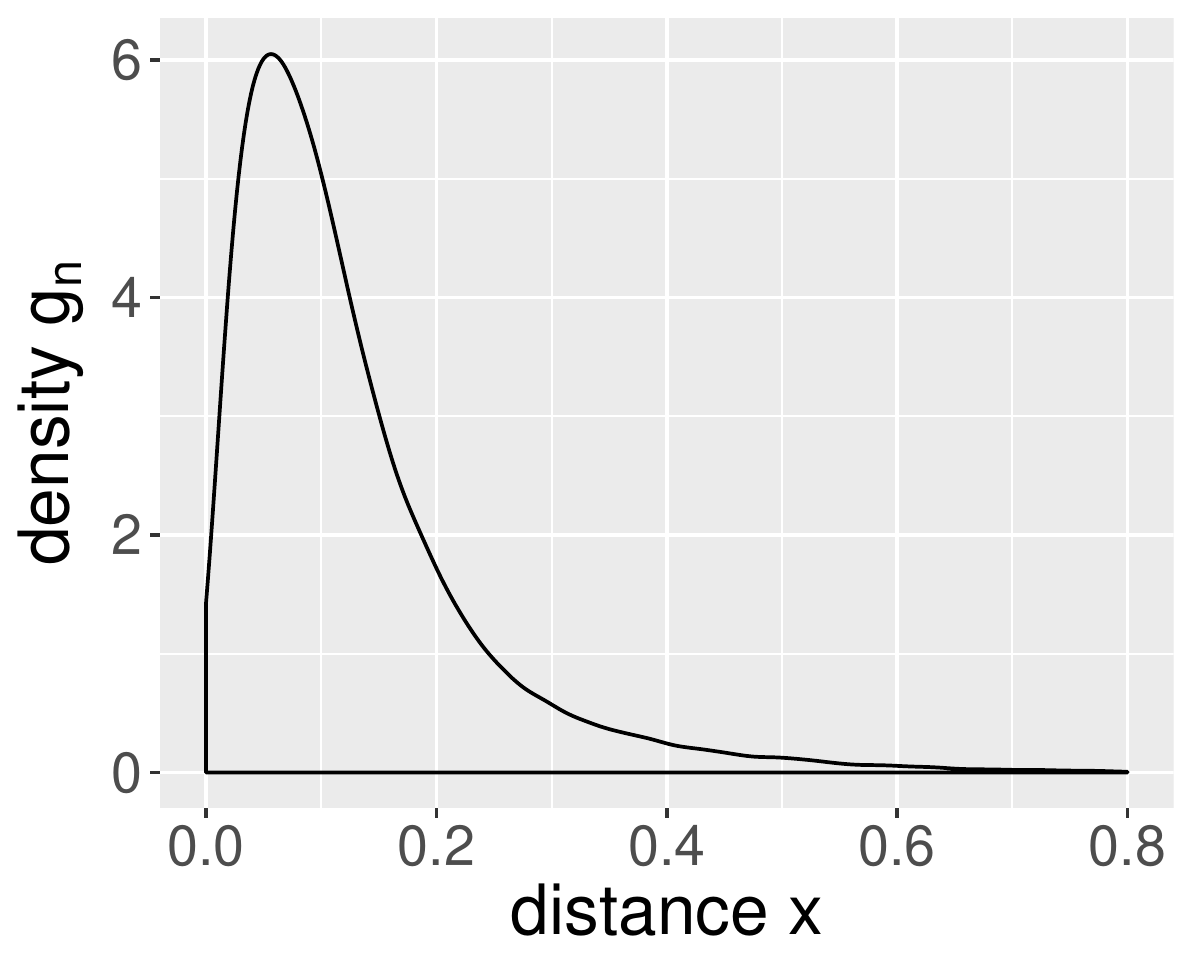}}
\put(80,3){Distance $x$}
\put(-12,60){\rotatebox{90}{Density $g_n(x)$}}
\end{picture}
\caption{Density function of neighboring distance $g_n$ along an optimal TSP route when packages are distributed in three clusters.}\label{tsp_density_clusters}
\end{subfigure}
\caption{Empirical density function of neighboring distance for both scenarios.}\label{tsp_neighbor}
\end{figure}

For uniformly distributed packages, the mean neighboring distance was 0.146, the third quantile was
0.193, and 95\% of the data lied below 0.290, with a standard deviation of 0.079. For the case of three clusters, the mean neighboring distance was 0.125, the third quantile was 0.108, and 95\% data lied below 0.334, with a standard deviation of 0.108. Note that, as expected, the mean in the three cluster case is smaller than that in the uniform case, but the standard deviation is larger. In both scenarios, almost all packages are very close to each other.

 \section{Appendix}

The following result taken from  \cite{haimovich1985bounds} (see also \cite{daganzo2005logistics}) provides upper and lower bounds for the length of an optimal CVRP route to deliver $n$ packages with arbitrary destinations ${\bf x}^{(n)} = \{ {\bf x}_1, \dots, {\bf x}_n\} \subseteq \mathbb{R}^2$.

\begin{theorem} \label{T.Daganzo}
For any set ${\bf x}^{(n)}$ of demand points serviced by a fleet of vehicles with capacity $V$ that originate from a single depot, we have
$$\max\left\{\frac{2n\bar{r}(n)}{V},L(\text{TSP}({\bf x}^{(n)}))\right\}\leq L(\text{CVRP}({\bf x}^{(n)}))\leq 2\left(\frac{n}{V}+1\right)\bar{r}(n)+L(\text{TSP}({\bf x}^{(n)})),$$
where $\bar{r}(n)$ is the empirical average distance, i.e., $\bar{r}(n)=\frac{1}{n}\sum_{i=1}^n r_i$.
\end{theorem}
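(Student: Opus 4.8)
The plan is to prove the two inequalities separately, following the classical iterated tour partitioning argument of \cite{haimovich1985bounds}: the left inequality by a direct argument valid for \emph{any} optimal CVRP solution, and the right inequality by exhibiting an explicit feasible solution.

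\emph{Lower bound.} Write an optimal CVRP solution as routes $R_1,\dots,R_q$, where $R_\ell$ is a closed walk based at the depot $\boldsymbol{0}$ serving a customer set $S_\ell$ with $|S_\ell|\le V$, and where $S_1,\dots,S_q$ partition $\{1,\dots,n\}$. Concatenating the $R_\ell$ at the depot gives a single closed walk through $\boldsymbol{0}$ and all $n$ points; listing the points in order of first appearance (dropping the depot and repeat visits) yields, by the triangle inequality, a Hamiltonian tour on ${\bf x}^{(n)}$ of length at most $\sum_\ell L(R_\ell)=L(\text{CVRP}({\bf x}^{(n)}))$, so $L(\text{TSP}({\bf x}^{(n)}))\le L(\text{CVRP}({\bf x}^{(n)}))$. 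Next, for each $\ell$ pick $i_\ell\in S_\ell$ with $r_{i_\ell}=\max_{i\in S_\ell}r_i$; splitting $R_\ell$ at ${\bf x}_{i_\ell}$ into an arc from $\boldsymbol{0}$ to ${\bf x}_{i_\ell}$ and an arc back, and applying the triangle inequality to each arc, gives $L(R_\ell)\ge 2r_{i_\ell}=2\max_{i\in S_\ell}r_i\ge \tfrac{2}{|S_\ell|}\sum_{i\in S_\ell}r_i\ge \tfrac{2}{V}\sum_{i\in S_\ell}r_i$. Summing over $\ell$ and using that the $S_\ell$ partition $\{1,\dots,n\}$ gives $L(\text{CVRP}({\bf x}^{(n)}))\ge \tfrac{2}{V}\sum_{i=1}^n r_i=\tfrac{2n\bar{r}(n)}{V}$, and combining the two bounds gives the left inequality.

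\emph{Upper bound.} First reduce to the case $V\mid n$ by adjoining $V-(n\bmod V)$ dummy customers located at the depot: inserting these copies of $\boldsymbol{0}$ into an optimal TSP tour on ${\bf x}^{(n)}$ raises its length by at most $2\min_i r_i\le 2\bar{r}(n)$, while the dummies contribute $0$ to every radial term below. So assume $V\mid n$ (having absorbed this $\le 2\bar r(n)$ cost), fix an optimal TSP tour, and label its points cyclically $p_1,\dots,p_n$. For each offset $a\in\{0,1,\dots,V-1\}$ cut the cyclic list into the $n/V$ consecutive blocks $\{p_{a+kV+1},\dots,p_{a+(k+1)V}\}$, $k=0,\dots,n/V-1$ (indices mod $n$), and turn each block into the feasible route that runs from $\boldsymbol{0}$ to the block's first point, traverses the TSP edges internal to the block, and returns from its last point to $\boldsymbol{0}$. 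Summed over blocks the internal edges form a subset of the tour's edges, contributing at most $L(\text{TSP})$; the remaining edges are exactly two ``radial'' edges per block. Averaging over the $V$ offsets, the positions used as ``first'' endpoints sweep each residue class mod $V$ once, and likewise for the ``last'' endpoints, so the average total radial length equals $\tfrac{2}{V}\sum_{i=1}^n r_i=\tfrac{2n\bar{r}(n)}{V}$. Choosing the best offset and adding back $L(\text{TSP})$ together with the padding cost $2\bar{r}(n)$ produces a feasible CVRP solution of length at most $L(\text{TSP}({\bf x}^{(n)}))+2\big(\tfrac{n}{V}+1\big)\bar{r}(n)$, which bounds $L(\text{CVRP}({\bf x}^{(n)}))$ from above.

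The short-cutting in the lower bound and the bound ``$\max\ge$ average'' are routine. The step that needs the most care is the averaging in the upper bound: one must verify that, as the offset ranges over $\{0,\dots,V-1\}$, each customer occurs exactly once as a block's first point and exactly once as its last point — which is precisely what collapses the average radial length to $\tfrac{2n\bar r(n)}{V}$ — and one must check the depot-insertion estimate that the optimal TSP tour on the padded set is no longer than $L(\text{TSP}({\bf x}^{(n)}))+2\min_i r_i$. Both are elementary once the cyclic bookkeeping is set up; without the padding reduction the same computation still works but must absorb the short leading and trailing blocks by hand, and that accounting is the only genuinely fiddly point.
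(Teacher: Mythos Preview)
The paper does not prove this theorem: it is stated in the Appendix as a result ``taken from \cite{haimovich1985bounds} (see also \cite{daganzo2005logistics})'' and is used as a black box in the proof of Lemma~\ref{L.CompareStrategies}. Your argument is essentially the classical iterated tour partitioning proof from that reference and is correct; the lower bound via short-cutting and the radial estimate $L(R_\ell)\ge 2\max_{i\in S_\ell}r_i\ge\tfrac{2}{V}\sum_{i\in S_\ell}r_i$ are standard, and your averaging-over-offsets computation for the upper bound, together with the depot-padding trick (detour from the nearest customer, costing at most $2\min_i r_i\le 2\bar r(n)$, then short-cut), cleanly recovers the $+2\bar r(n)$ term. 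The only cosmetic point is that after padding one is really working with a closed walk rather than a simple tour, but since the ITP construction only needs an ordered cyclic list of points this causes no difficulty.
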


The following result taken from \cite{karloff1989long} (see also \cite{goddyn1990quantizers}) gives an asymptotic upper bound for the length of an optimal TSP route through the points ${\bf x}^{(n)}$. 

\begin{theorem}\label{T.TSPbound}
For any sequence of points ${\bf x}^{(n)}$ contained in a compact region $R \subseteq \mathbb{R}^2$, and any of the $L_p$ distances on $\mathbb{R}^2$, there  exists a constant $\alpha_{TSP}(p)$ satisfying 
$$\limsup_{n\rightarrow\infty}\frac{L(TSP({\bf X}^{(n)}))}{\sqrt{nA}}\leq \alpha_{TSP}(p),$$
where $A$ is the area of $R$. 
\end{theorem}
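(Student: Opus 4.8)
This result is the classical fact that $n$ arbitrary points lying in a region of area $A$ admit a closed tour whose length is at most $\beta_p\sqrt{nA}$ plus a correction term that stays bounded as $n\to\infty$, with $\beta_p$ depending only on the metric. I would obtain it from three ingredients: a ``boustrophedon'' construction inside a single small square (Few's theorem), a covering of the compact region $R$ by a grid of small squares whose total area is arbitrarily close to $A$, and a stitching step that joins the per-square paths at a cost independent of $n$.

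\emph{Step 1 (base estimate).} First I would show that any $k$ points in the unit square $[0,1]^2$ are connected by a Hamiltonian path of Euclidean length at most $3\sqrt{k}+2$. Partition $[0,1]^2$ into $\lceil\sqrt{k}\,\rceil$ horizontal strips; inside each strip order the points it contains by increasing first coordinate, then traverse strip $1$ left to right, strip $2$ right to left, and so on, concatenating the resulting lists. The accumulated horizontal displacement is at most $1$ inside each strip (the points are sorted) plus at most $1$ to move on to the next strip, hence $O(\sqrt k)$ overall; the accumulated vertical displacement is at most $(k_i-1)/\sqrt{k}$ inside the $i$-th strip (consecutive points lie within a single strip of height $1/\sqrt k$), which sums to at most $\sqrt k$, plus $O(1)$ for the strip transitions. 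Rescaling, any $k$ points in a square of side $\delta$ have a Hamiltonian path of Euclidean length at most $\delta(3\sqrt k+2)$, and since $\|v\|_p\le C_p\|v\|_2$ on $\mathbb{R}^2$ for a constant $C_p$ depending only on $p$, the $L_p$ length is at most $C_p\delta(3\sqrt k+2)$; this is where the $p$-dependence of $\alpha_{TSP}(p)$ comes from.

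\emph{Step 2 (covering and assembly).} Fix $\varepsilon>0$. Since $R$ is compact, its closed $\eta$-neighbourhoods $R^{(\eta)}$ decrease to $R$ as $\eta\downarrow 0$, so $\mathrm{area}(R^{(\eta)})\downarrow A$; choose $\delta>0$ with $\mathrm{area}(R^{(\sqrt2\,\delta)})<A+\varepsilon$. Let $s_1,\dots,s_N$ be the squares of the grid $\delta\mathbb{Z}^2+[0,\delta]^2$ that meet $R$; each of them lies inside $R^{(\sqrt2\,\delta)}$ and they are interior-disjoint, so $N\delta^2\le A+\varepsilon$. Given $n$ points ${\bf x}^{(n)}\subseteq R$, write $n_j$ for the number of them lying in $s_j$, so $\sum_j n_j=n$. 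Inside $s_j$ use Step 1 to get a path of $L_p$-length at most $C_p\delta(3\sqrt{n_j}+2)$, then connect the path in $s_j$ to the path in $s_{j+1}$ (with the squares ordered lexicographically) and finally close up into a tour. By Cauchy--Schwarz, $\sum_j\delta\sqrt{n_j}\le\delta\sqrt{N}\,\sqrt{n}=\sqrt{N\delta^2}\,\sqrt{n}\le\sqrt{A+\varepsilon}\,\sqrt{n}$, so the per-square parts contribute at most $3C_p\sqrt{A+\varepsilon}\,\sqrt{n}+2C_pN\delta$. The ``backbone'' joining consecutive squares uses at most $N$ steps of length $\le\sqrt2\,\delta$ plus at most $\mathrm{diam}(R)/\delta$ longer steps of length $\le\mathrm{diam}(R)$ (whenever the lexicographic order jumps to a new row), so its total length is bounded by a constant $C(R,\varepsilon)$ that does not depend on $n$. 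Therefore $L(\mathrm{TSP}({\bf x}^{(n)}))\le 3C_p\sqrt{A+\varepsilon}\,\sqrt{n}+C'(R,\varepsilon)$, and dividing by $\sqrt{nA}$ and letting $n\to\infty$ gives $\limsup_{n}L(\mathrm{TSP}({\bf x}^{(n)}))/\sqrt{nA}\le 3C_p\sqrt{1+\varepsilon/A}$; since $\varepsilon>0$ was arbitrary this proves the theorem with $\alpha_{TSP}(p)=3C_p$ (any valid constant suffices here).

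\emph{Main obstacle.} The only genuinely delicate point is keeping the constant dependent on the metric alone and not on the shape of $R$: a crude version that applies Step 1 directly to a bounding box of $R$ produces a constant proportional to $\mathrm{diam}(R)$. The covering-by-small-squares step, together with the Cauchy--Schwarz inequality turning $\sum_j\sqrt{n_j}$ into $\sqrt{N\delta^2}\,\sqrt{n}\le\sqrt{A+\varepsilon}\,\sqrt{n}$, is precisely what yields the correct $\sqrt{A}$ scaling with a universal prefactor; one then only has to note that the inter-square stitching cost, although it may be large, is a constant in $n$ and so vanishes after normalising by $\sqrt{n}$. Obtaining the sharp value of $\alpha_{TSP}(p)$, as in \cite{karloff1989long}, requires a more careful strip construction but is not needed for the uses of the theorem in this paper.
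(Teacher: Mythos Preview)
The paper does not actually prove this theorem: it is quoted from \cite{karloff1989long} and \cite{goddyn1990quantizers}, and the only argument the paper supplies is the one-line remark that the general $L_p$ case follows from the $L_2$ case by the norm comparison $d_q\le d_2$ for $q\ge 2$ and $d_p\le 2^{1/p-1/2}d_2$ for $1\le p\le 2$. Your proposal therefore goes well beyond what the paper does, giving a complete self-contained proof.

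Your argument is the classical elementary one and is correct in outline: Few's strip (boustrophedon) construction gives an $O(\sqrt{k})$ Hamiltonian path through $k$ points in a unit square; covering $R$ by grid squares of side $\delta$ with total area at most $A+\varepsilon$, applying the strip bound in each square, and using Cauchy--Schwarz to turn $\sum_j\delta\sqrt{n_j}$ into $\sqrt{N\delta^2}\,\sqrt{n}\le\sqrt{A+\varepsilon}\,\sqrt{n}$ yields the $\sqrt{nA}$ scaling with a prefactor depending only on $p$; the stitching cost is $O_{R,\varepsilon}(1)$ and disappears after dividing by $\sqrt{n}$. The one place your write-up is a bit loose is Step~1, where the specific constant $3$ does not quite fall out of the bookkeeping you sketch (you are implicitly bounding Euclidean edge lengths by $|\Delta x|+|\Delta y|$ and then summing horizontal and vertical displacements separately, which is fine but gives a slightly different constant); since you only need some $O(\sqrt{k})$ bound, this does not affect the conclusion. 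The norm-comparison step you use to pass to general $L_p$ is exactly the mechanism the paper invokes, so on that point the two agree.
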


Note that the statement appearing in \cite{goddyn1990quantizers,karloff1989long} is stated only for $p = 2$, however,  the general case can be obtained from $\alpha_\text{TSP}(2)$ by noting that $d_q({\bf x}, {\bf y}) \leq d_2({\bf x}, {\bf y}) \leq 2^{1/2-1/p} d_p({\bf x}, {\bf y})$ for $1 \leq p \leq 2 \leq q$.

\section*{Acknowledgments}

Mariana Olvera-Cravioto was funded through NSF grant CMMI-1537638.

\bibliography{packing}
\bibliographystyle{plain}

\end{document}